\newif\ifiop
\iopfalse
\ifiop
\documentclass{iopjournal}
\else
\documentclass[10pt,parskip=half]{scrartcl}
\fi
\usepackage{amsmath,amsfonts,amsthm,latexsym,graphicx,paralist}
\usepackage[utf8]{inputenc}
\ifiop
\hypersetup{
  citecolor=teal,
  linkcolor=red,
  urlcolor=blue}
\usepackage[style=numeric-comp,sorting=none]{biblatex}
\else
\usepackage{xcolor,hyperref}
\hypersetup{
  colorlinks,
  citecolor=teal,
  linkcolor=red,
  urlcolor=blue}
\usepackage[citestyle=alphabetic,style=alphabetic]{biblatex}
\fi
\usepackage[capitalise]{cleveref}
\usepackage{derivative}
\usepackage{siunitx}

\newcommand{\prob}{{\mathbb P}}
\newcommand{\ev}[1]{ {\boldsymbol{\mathsf  E}} \left[  #1 \right]}
\newcommand{\evalt}[2]{{\boldsymbol{\mathsf  E}}_{#1} \left[ #2 \right]}

\newcommand{\vc}{\boldsymbol{c}}

\newcommand{\vx}{\boldsymbol{x}}
\newcommand{\vy}{\boldsymbol{y}}
\newcommand{\vz}{\boldsymbol{z}}
\newcommand{\vxi}{\boldsymbol{\xi}}
\newcommand{\vzeta}{\boldsymbol{\zeta}}
\newcommand{\vvarepsilon}{\boldsymbol{\varepsilon}}
\newcommand{\nhidden}{n_\text{hidden}}
\newcommand{\vV}{\boldsymbol{V}}

\newcommand{\vI}{\boldsymbol{I}}

\newcommand{\DD}{{\mathcal D}}

\newcommand{\UU}{{\mathcal U}}

\newcommand{\R}{{\mathbb R}}

\usepackage{mathtools}
\DeclarePairedDelimiter{\norm}{\lVert}{\rVert}
\DeclarePairedDelimiter\abs{\lvert}{\rvert}

\newcommand{\dkl}[2]{D_\text{KL}(#1 \| \,#2)}
\newcommand{\dtv}[2]{d_\text{TV}(#1,#2)}

\newcommand{\nkl}{n_{\text{KL}}}
\newcommand{\nobs}{n_{\text{obs}}}

\theoremstyle{plain}%
\newtheorem{thm}{Theorem}[section]

\newtheorem{prop}[thm]{Proposition}

\theoremstyle{definition}

\newtheorem{remark}{Remark}[section]

\theoremstyle{remark}

\renewcommand{\R}{\mathbb{R}}

\newcommand{\NN}{\mathcal{N}}

\usepackage{algorithm}
\usepackage{algpseudocode}
\usepackage{subcaption}
\usepackage{pdflscape}
\usepackage{booktabs}
\ifiop\else
\usepackage[a4paper,textwidth=153mm,textheight=240mm,centering]{geometry}
\setkomafont{author}{\large}
\fi

\renewcommand{\mid}{ \hspace{0.1em}\vert\hspace{0.1em}}
\renewcommand{\R}{\mathbb R}

\renewcommand{\NN}{\mathcal N}

\renewcommand{\nobs}{m}

\newcommand{\tdist}{\pi}
\newcommand{\tdens}{\pi}
\newcommand{\rdist}{\nu}
\newcommand{\rdens}{\nu}
\newcommand{\fmdist}{\pi_\theta}
\newcommand{\fmdens}{\pi_\theta}
\newcommand{\Law}{\mathcal{L}}
\newcommand{\dchisq}[2]{D_{\chi^2}(#1 \| \,#2)}

\newcommand{\papertitle}{Flow Matching for Fast Posterior Sampling in Bayesian Inverse Problems}
\newcommand{\paperkeywords}{Bayesian inverse problems, flow matching, Markov chain Monte Carlo, independent Metropolis--Hastings, amortized inference, electrical impedance tomography}
\newcommand{\datastatement}{The code and data that support the findings will be made publicly available as part of this study upon acceptance of the manuscript.}

\begin{document}

\ifiop
\articletype{Paper}

\title{\papertitle}

\author{Jan Blechschmidt$^{1,*}$\orcid{0000-0003-3606-0606}, Oliver G. Ernst$^1$\orcid{0000-0002-0176-7321}, Moritz Poguntke$^1$\orcid{0009-0007-2390-9988} and Björn Sprungk$^2$\orcid{0000-0001-8359-6863}}

\affil{$^1$Department of Mathematics, TU Chemnitz, Chemnitz, Germany}

\affil{$^2$Faculty of Mathematics and Computer Science, TU Bergakademie Freiberg, Freiberg, Germany}

\affil{$^*$Author to whom any correspondence should be addressed.}

\email{jan.blechschmidt@mathematik.tu-chemnitz.de}

\keywords{\paperkeywords}
\else
\title{\papertitle}
\author{Jan Blechschmidt\textsuperscript{1}\thanks{Corresponding author: \href{mailto:jan.blechschmidt@mathematik.tu-chemnitz.de}{jan.blechschmidt@mathematik.tu-chemnitz.de}} \and Oliver G. Ernst\textsuperscript{1} \and Moritz Poguntke\textsuperscript{1} \and Björn Sprungk\textsuperscript{2}}
\publishers{\normalsize
  \textsuperscript{1}Department of Mathematics, TU Chemnitz, Chemnitz, Germany\\
  \textsuperscript{2}Faculty of Mathematics and Computer Science, TU Bergakademie Freiberg, Freiberg, Germany}
\date{}
\maketitle
\fi

\begin{abstract}
Sampling from the posterior is the central task of computational Bayesian inverse problems.
The standard workhorse in Bayesian inference---Markov chain Monte Carlo (MCMC)---is sequential, yields correlated samples, and must be rerun for each new observation.
Conditional flow matching offers an amortized alternative: a transport map, trained once on joint samples of parameter and data, that yields independent approximate posterior samples for any observation at negligible online cost, without new likelihood evaluations.
We give a careful, MCMC-literate assessment of flow matching for PDE-based inverse problems with function-valued parameters.
Exploiting the flow's tractable density, we derive computable accuracy estimates for the learned posterior in total variation distance and Kullback--Leibler divergence and, moreover, propose
a hybrid sampler that is made asymptotically exact by Metropolization.
We validate the accuracy estimates and demonstrate amortization in several numerical examples, including electrical impedance tomography and a likelihood-free Lotka--Volterra model.
Flow matching reproduces posterior means and credible bands closely at a fraction of the forward solves required by MCMC, but its accuracy degrades for concentrated posteriors and for observations that are atypical under the prior.
\ifiop\else
\par\medskip\noindent\textbf{Keywords:} \paperkeywords
\fi
\end{abstract}

\section{Introduction} \label{sec:introduction}

For the past three decades, Markov chain Monte Carlo (MCMC) has been the workhorse for sampling from posterior distributions in Bayesian inference and Bayesian inverse problems.
Modern variants --- dimension-robust proposals such as preconditioned Crank--Nicolson \cite{cotter2013mcmc,hairer2014spectral},
likelihood-informed \cite{CuiEtAl2016, RudolfSprungk2018} and adaptive \cite{haario2006dram,haario2001adaptive} schemes ---
are well understood theoretically and remarkably reliable in practice.
Their cost profile, however, is fixed by design: the Markov chain is sequential, its samples are correlated, every step requires at least one evaluation of the forward model, and the entire computation must be redone from scratch for each new observation.
In applications where the same inverse problem is solved repeatedly for a stream of measurements --- electrical impedance
tomography (EIT) being a prototypical example --- this per-observation cost is the dominant obstacle to online or real-time uncertainty quantification.

A different paradigm, going back to \textcite{ElMoselhyMarzouk2012} and systematically developed in \cite{parno2015transport, MarzoukEtAl2016}, is to construct a \emph{transport map} pushing a simple reference distribution forward to the posterior: once the map is
available, we can generate, in parallel, arbitrarily many independent samples from (an approximation to) the posterior.
If the transport map is of block-triangular structure and trained to generate the joint distribution of unknown parameters and observable data, then a standard disintegration argument in \cite[Sec.~7]{MarzoukEtAl2016} shows that this transport map reproduces the posterior for (almost) all possible observations.
In this sense, transport-based sampling for Bayesian inverse problems is \emph{amortized}: after an offline training phase, approximate posterior samples for any new observation are available at negligible online cost, namely one evaluation of the transport map per sample, and no further likelihood evaluations are needed.
These are substantial promises, in particular for PDE-based Bayesian inverse problems with computationally expensive likelihoods.

Several approaches have been proposed for constructing suitable transport maps.
The foundational works \cite{ElMoselhyMarzouk2012,parno2015transport,MarzoukEtAl2016} focus on monotone triangular transport maps motivated by the Knothe--Rosenblatt rearrangement.
More recently, generative machine learning has supplied more flexible and scalable approaches:
\textcite{baptista2024conditional} use monotone generative adversarial networks (GANs) \cite{Goodfellow2014}, and \textcite{wang2023efficient} employ continuous-time normalizing flows in the form of neural ordinary differential equations (ODEs) \cite{ChenEtAl2018, grathwohl2018} to construct conditional optimal transport maps.
Score-based diffusion models \cite{SongEtAl2021} have been applied to Bayesian inverse problems as well, e.g., in \cite{ChungEtAl2023, BaldassariEtAl2023}.
Closely related to these is the \emph{flow matching} approach for learning a neural ODE-based transport map \cite{LipmanEtAl2023,liu2022flow,AlbergoVandenEijnden2023,AlbergoEtAl2023}.
Its main advantage over maximum-likelihood training of continuous normalizing flows \cite{ChenEtAl2018, grathwohl2018} is that this training is replaced by a simple least-squares regression: each training step requires a single evaluation of the network instead of an ODE solve, which reduces the training cost substantially (cf.\ Section~\ref{subsec:lotka_volterra}).
\emph{Bayesian flow matching}, i.e., flow matching for sampling conditional distributions, has been derived and discussed in detail in \cite{chemseddine2025CondFM, hagemann2025conditional,SteidlWald} but has been demonstrated primarily on low-dimensional benchmarks and imaging-type applications.
Very recently, Krüger et al.\ \cite{krueger2026} compare three different machine learning approaches for transport sampling of posteriors in Bayesian inverse problems --- GANs, invertible neural networks, and Bayesian flow matching --- and find Bayesian flow matching to be the ``clear winner''.
Moreover, owing to its likelihood-free training, flow matching can also be used for simulation-based inference \cite{WildbergerEtAl2023,CranmerEtAl2020}, where the likelihood is unknown or intractable, so that likelihood-based MCMC is not applicable.

Flow matching therefore appears to be among the most promising approaches for transport-based sampling of conditional distributions.
What is missing so far in the literature, in our view, is a careful, MCMC-literate assessment of flow matching for PDE-based Bayesian inverse problems: one that compares it not with other transport-based approaches but with well-tuned standard MCMC algorithms, on problems of increasing difficulty.
Furthermore, this paper aims to provide computable estimates of lower and upper bounds of the accuracy of the learned flow matching posteriors.
These estimates rest on (Markov chain) Monte Carlo theory and on combining flow matching with the independent Metropolis–Hastings algorithm.
The combination also yields a computationally efficient, asymptotically exact sampler that outperforms standard
MCMC samplers on some challenging Bayesian inverse problems.
In this regard, we also mention previous works on the combination of transport maps and MCMC: \textcite{ParnoMarzouk2018} use monotone triangular transport maps to construct posterior-adapted proposals that accelerate MCMC;
\textcite{GabrieEtAl2022} interleave local MCMC moves with nonlocal proposals from a normalizing flow that is trained during sampling;
and \textcite{grenioux2023sampling} and \textcite{BrofosEtAl2022} study discrete-time normalizing flows as proposals within independent Metropolis–Hastings, the latter with adaptation of the flow during sampling and an accompanying convergence analysis.
In contrast, the flow matching proposal used here is trained once, without likelihood evaluations, and serves all observations.
Finally, a priori error bounds for unconditional flow matching in Wasserstein distance are derived in \cite{BentonEtAl2024}; these, however, are not computable in practice.

\paragraph{Our contributions} are summarized as follows:
\begin{enumerate}
\item
We give a self-contained account of this methodology addressed to the inverse problems and UQ communities, assembling its mathematical and computational ingredients in a unified presentation anchored in measure transport (Section~\ref{sec:fm}).

\item
Exploiting the tractable density of the flow and using classical Monte Carlo theory, we derive computable
estimates of lower and upper bounds on the accuracy of the learned posterior in total variation (TV) distance and Kullback--Leibler (KL) divergence (Section~\ref{subsec:diagnostics}); related weight-based diagnostics for amortized posteriors are used in \cite{DaxEtAl2023}.
\item
As a by-product, we obtain, as in \cite{grenioux2023sampling,BrofosEtAl2022}, an asymptotically exact \emph{hybrid sampler}, in which the bias of flow matching samples is removed via Metropolization (Section~\ref{subsubsec:imh}).
\item
We demonstrate the efficiency and accuracy of flow matching for PDE-based Bayesian inverse problems in a number of numerical experiments,\footnote{The code for all examples will be published on GitHub upon acceptance.} in which we
\begin{itemize}
    \item
    compare the estimated accuracy bounds with reference values computed by quadrature in a simple illustrative example (Section~\ref{sec:toy}),
    \item
    conduct systematic parameter studies on the accuracy of flow matching for Bayesian inversion in the stationary diffusion equation for an increasing number of observations, a decaying noise level and an increasing number of Karhunen--Loève terms (Section~\ref{sec:infinite_heat_example}), and
    \item
    illustrate the benefits of flow matching-based sampling compared to MCMC as an amortized sampler in EIT (Section~\ref{subsec:eit}), where a single offline training enables real-time posterior sampling for a stream of new measurements, and for a stochastic Lotka--Volterra model, where the likelihood is intractable and classical MCMC methods, which rely on likelihood evaluations, are not applicable (Section~\ref{subsec:lotka_volterra}).
\end{itemize}
\end{enumerate}

Once trained, the flow matching model provides a sampler for \emph{new} observations at negligible cost, in contrast to MCMC.
This advantage comes with clearly identifiable limitations, which our experimental studies map out: the accuracy of flow matching approximations can degrade for strongly concentrated posteriors (e.g., due to small observational noise) and for observations unlikely under the prior, because flow matching spreads the training data according to the prior predictive distribution whereas MCMC focuses its likelihood evaluations (after burn-in) near the center of the posterior probability mass.

This paper is deliberately computational and proof-of-concept in character.
A deeper mathematical analysis of Bayesian flow matching, in particular the development of remedies for the outlined shortcomings, is the subject of future work.

\paragraph{Problem setting and outline}
We consider the inverse problem of inferring an unknown parameter $x \in \mathcal{X}$ from a noisy observation $y \in \mathcal{Y}$ related through the observation model
\begin{equation} \label{eq:forward}
    y = F(x) + \varepsilon,
\end{equation}
where $F \colon \mathcal{X} \to \mathcal{Y}$ denotes the forward operator --- typically the composition of a PDE solution operator with an observation operator --- and $\varepsilon$ is observational noise, independent of $x$.
The parameter space $\mathcal{X}$ is in general a function space, while $\mathcal{Y} = \mathbb{R}^{\nobs}$ throughout.
The inverse problem of recovering $x$ from $y$ is typically ill-posed in the sense of Hadamard \cite{Hadamard}.
Instead of regularizing it to obtain a single estimate, e.g., variationally, we adopt the Bayesian approach \cite{Stuart2010}, which characterizes the uncertainty in $x$ given $y$ by a probability distribution: endowing $x$ with a prior distribution $\tdist_X$ and assuming a Gaussian observational noise $\varepsilon \sim \mathcal{N}(0, \Gamma)$, the posterior distribution, i.e., the conditional distribution of $x$ given the observation $y$, is given by Bayes' rule as a change of measure with respect to
the prior,
\begin{equation}
  \label{eq:rn_posterior}
  \frac{\mathrm{d}\tdist^y}{\mathrm{d}\tdist_X}(x)
  = \frac{1}{Z(y)} \exp\bigl( -\tfrac{1}{2} \bigl\| y - F(x) \bigr\|_\Gamma^2 \bigr)
\end{equation}
with the normalizing constant $Z(y) = \int_{\mathcal{X}} \exp\bigl( -\tfrac{1}{2} \bigl\| y - F(x) \bigr\|_\Gamma^2 \bigr)\, \tdist_X(\mathrm{d}x)$ and $\|v\|_\Gamma \coloneqq \|\Gamma^{-1/2} v\|_2$.

In the following, we focus mainly on the finite-dimensional case, i.e., on a discretization $\mathcal{X}_h = \mathbb{R}^d$ of $\mathcal X$ and an absolutely continuous prior $\tdist_X$ with Lebesgue density also denoted by $\tdens_X\colon \mathbb{R}^d \to [0,\infty)$.
Then, the posterior $\tdist^y$ also admits a Lebesgue density given by
\begin{equation} \label{eq:posterior}
    \tdens^y(x)
    =
    \frac{1}{Z(y)}  \exp\Bigl( -\tfrac{1}{2} \bigl\| y - F(x) \bigr\|_\Gamma^2 \Bigr)\, \tdens_X(x),
    \qquad
    x \in \mathbb R^d.
\end{equation}

The computational task is then to sample from $\tdist^y$, either for a single observation $y$ or --- the setting emphasized in this paper --- for multiple observations $y$ under the same prior and forward model.
Two features of \eqref{eq:posterior} are important for the design of sampling algorithms:
the normalizing constant $Z(y)$ is unavailable, so algorithms may access the posterior only through
ratios of the unnormalized density; and each evaluation of $F$ requires one solve of the forward model, which dominates the computational cost in the PDE-based problems of Section~\ref{sec:experiments}.

Section~\ref{sec:mcmc} recalls the Metropolis--Hastings MCMC variants used as baselines.
Section~\ref{sec:fm} presents conditional sampling via flow matching including the proposed sample-based diagnostics.
Section~\ref{sec:experiments} contains the numerical study, and Section~\ref{sec:discussion} collects our findings, limitations, and open questions.
\section{Posterior sampling via Markov chain Monte Carlo} \label{sec:mcmc}

The Markov chain Monte Carlo method constructs an ergodic Markov chain $(X^{(k)})_{k \ge 1}$ on $\mathcal{X}$, or on its discretization $\mathcal{X}_h=\mathbb{R}^d$, with $\tdist^y$ as its invariant distribution, so that the states approximately follow the posterior distribution for sufficiently large $k$.
We restrict attention to the Metropolis–Hastings (MH) class, which contains many of the most widely used sampling algorithms for Bayesian inverse problems and suffices for the comparisons in this paper; we recall the general mechanism, discuss its efficiency, and describe the variants used in this paper.

\subsection{The Metropolis--Hastings algorithm} \label{subsec:mh}

Given the current state $x^{(k)}$, an MH transition consists of two steps:
a candidate $\tilde{x}$ is drawn from a proposal density
$q(\tilde{x} \mid x^{(k)})$, and is accepted, i.e., $X^{(k+1)} = \tilde{x}$,
with probability
\begin{equation} \label{eq:mh_acceptance}
  \alpha\bigl(x^{(k)}, \tilde{x}\bigr)
  =
  \min\biggl\{ 1,\;
    \frac{\tdist^y(\tilde{x})}{\tdist^y(x^{(k)})}\,
    \frac{q(x^{(k)} \mid \tilde{x})}{q(\tilde{x} \mid x^{(k)})}
  \biggr\},
\end{equation}
otherwise the chain remains at $X^{(k+1)} = x^{(k)}$; generic pseudocode is recorded as Algorithm~\ref{alg:mh} in Appendix~\ref{app:mh_alg}.
Only ratios of the posterior density enter \eqref{eq:mh_acceptance}, so $Z(y)$ is never needed, but every iteration costs at least one evaluation of $F$ and hence one forward solve.
Under mild positivity assumptions on $q$ \cite{robert1999monte}, the resulting chain is ergodic and converges to the posterior in total variation from any initialization,
\begin{equation} \label{eq:TVdist}
  \lim_{k \to \infty}
  d_{\mathrm{TV}}\bigl( \Law(X^{(k)}),\, \tdist^y \bigr) = 0,
  \qquad
  d_{\mathrm{TV}}(\mu_1, \mu_2)
  \coloneqq
  \sup_{A \in \mathcal{B}(\mathcal{X})} |\mu_1(A) - \mu_2(A)|,
\end{equation}
where $\mathcal{L}(X^{(k)})$ denotes the law of $X^{(k)}$.
Note that for absolutely continuous $\mu_1, \mu_2$ on $\mathbb R^d$ one has $d_{\mathrm{TV}}(\mu_1,\mu_2) = \tfrac{1}{2}\int |\mu_1(x) -\mu_2(x)| \, \mathrm dx$.
Moreover, the average taken along the path of the Markov chain converges to the true posterior expectation, i.e.,
for every $f\colon \mathcal{X}_h\to\mathbb{R}$ that is integrable with respect to $\tdist^y$, we have, under mild assumptions \cite{robert1999monte}, almost surely
\begin{equation*}
  \lim_{n \to \infty} \frac 1n \sum_{k=1}^n f(X^{(k)})
  =
  \evalt{X \sim \tdist^y}{f}.
\end{equation*}
What distinguishes good from poor MH samplers, i.e., a good from a poor choice of proposal $q$, is not these asymptotic guarantees but rather the rate of convergence in \eqref{eq:TVdist}
and the asymptotic variance of the path average estimator $\frac 1n \sum_{k=1}^n f(X^{(k)})$.

A common measure for the efficiency of a Markov chain Monte Carlo algorithm is therefore its effective sample size (ESS).
For a stationary chain, i.e., $X^{(1)} \sim \tdist^y$, it is given by
\[
    \mathrm{ESS}_f(n)
    =
    \frac{n}{1+2\sum_{k=1}^\infty \mathrm{Corr}\bigl(f(X^{(1)}),f(X^{(1+k)})\bigr)}, \qquad n\in\mathbb N,
\]
and corresponds, for large $n$, to the number of independent samples from $\tdist^y$ whose empirical average of $f$ has the same variance as the path average $\frac 1n \sum_{k=1}^n f(X^{(k)})$.
It satisfies $\mathrm{ESS}_f(n) \le n$ whenever the autocorrelations sum to a nonnegative value, as is typically the case for the samplers considered here.

\paragraph{Computational aspects} \label{subsec:mcmc_cost}
MH algorithms share a common cost structure that frames the comparison in this paper.
The Markov chain is inherently sequential in that each state depends on its predecessor, so sampling typically parallelizes only across independent chains or across proposal generation.
Its output is correlated, so the effective sample size falls short of the chain length, often by orders of magnitude in the case of poor mixing.
Every iteration requires at least one forward solve, the dominant cost in PDE-based Bayesian inverse problems.
Moreover, the entire computation is tied to a single
observation $y$: a new measurement requires running a new Markov chain.
The flow matching methodology assessed in the next section targets precisely these computational bottlenecks --- independent samples, generated in parallel, at negligible per-sample cost, amortized over all observations --- while giving up the asymptotic exactness that the Metropolis--Hastings correction provides. In Sections~\ref{subsubsec:imh}
and \ref{subsec:diagnostics} we show how the two approaches can be combined to recover it.

\subsection{Common MH algorithms} \label{subsec:mcmc_variants}

In this paper three MH variants --- RWMH, pCN and DRAM --- serve as baselines against which flow matching is compared, while a fourth, IMH, forms the basis of the hybrid sampler of Section~\ref{subsubsec:imh}; we recall them in the briefest terms and refer to the cited literature for details.

\emph{Independent Metropolis--Hastings} (IMH)
uses a proposal $q$ which does not depend on the current state, i.e., $q(\tilde{x} \mid x^{(k)}) = q(\tilde{x})$, i.e.,  $\tilde{x} = \zeta$ with $\zeta \sim q$ drawn independently of $x^{(k)}$.
The performance of IMH depends crucially on the overlap between proposal and posterior: if $q$ is a good
proxy for $\tdist^y$, the chain behaves nearly like direct sampling, whereas a poor proxy produces long runs of rejections and highly correlated output.
In Section~\ref{subsubsec:imh} we use the flow matching approximation as the proposal within IMH, which yields asymptotically exact samples as well as an empirical estimate of the accuracy of the flow matching proxy in TV distance.

\emph{Random walk Metropolis} (RWMH)
\cite{metropolis1953equation,hastings1970monte} proposes a Gaussian
perturbation of the current state, $\tilde{x} = x^{(k)} + \zeta$ with
$\zeta \sim \mathcal{N}(0, \Sigma)$; the symmetric proposal density $q(\tilde{x} \mid x^{(k)}) \propto \exp(-\tfrac12 \|\tilde{x} - x^{(k)}\|^2_{\Sigma})$ cancels in \eqref{eq:mh_acceptance}. This MH variant is simple and robust but requires careful
tuning of $\Sigma$ and mixes increasingly slowly as the dimension $d$ grows.

\emph{Preconditioned Crank--Nicolson} (pCN)
\cite{cotter2013mcmc,hairer2014spectral} proposes, for a centered Gaussian prior $\tdist_X$,
\begin{equation*}
  \tilde{x} = \sqrt{1 - \beta^2}\; x^{(k)} + \beta\, \zeta,
  \qquad \zeta \sim \tdist_X, \quad \beta \in (0, 1).
\end{equation*}
The proposal is reversible with respect to the prior, so the acceptance probability reduces to the likelihood ratio
$\min\{1, \exp( \tfrac{1}{2} \bigl\| y - F(x^{(k)}) \bigr\|_\Gamma^2 - \tfrac{1}{2} \bigl\| y - F(\tilde{x}) \bigr\|_\Gamma^2)\}$.
Crucially, pCN is well defined on the function space $\mathcal{X}$ itself, i.e., before any discretization, and its convergence behavior is robust under mesh refinement
\cite{cotter2013mcmc,hairer2014spectral}, which makes it a natural reference for the function-space examples of Section~\ref{sec:experiments}.

\emph{Delayed rejection adaptive Metropolis} (DRAM) \cite{haario2006dram} combines adaptive Metropolis \cite{haario2001adaptive}, which periodically updates the proposal covariance from the chain history, with delayed rejection \cite{mira2001metropolis}, which upon rejection retries with a
rescaled, typically narrower, proposal before the current state is retained.
DRAM often attains lower autocorrelation than RWMH on correlated posteriors, at the price of
per-iteration overhead and of diminishing-adaptation conditions required for ergodicity.

\section{Posterior sampling via flow matching} \label{sec:fm}

This section describes the transport-based sampler assessed in this paper.
We first recall the structural result reducing posterior sampling to a measure transport problem on the product of parameter and observation space (Section~\ref{subsec:triangular}).
We then introduce the representation of the transport map as the flow of a neural ODE and explain why its classical, likelihood-based training is computationally prohibitive in the inverse problems context before presenting \emph{flow matching}, the regression-based training that makes the approach practical (Section~\ref{subsec:bfm}).
Finally, we discuss how flow matching combined with MCMC can yield asymptotically exact simulations of the posterior (Section~\ref{subsubsec:imh}) and how classical Monte Carlo theory yields computable
estimates of lower and upper bounds on the accuracy of the flow matching approximation (Section~\ref{subsec:diagnostics}).

\subsection{Posterior sampling as a transport problem} \label{subsec:triangular}

The measure transport approach to computational Bayesian inference  \cite{ElMoselhyMarzouk2012,MarzoukEtAl2016} seeks a map $T_X$
pushing forward a tractable reference distribution $\eta$ on the parameter space $\mathcal{X}$---in the
finite-dimensional setting below, a standard Gaussian---to the posterior, $T_X{}_\ast \eta = \tdist^y$, so that posterior sampling reduces to evaluating $T_X$ at reference samples.
For a \emph{fixed} observation $y$ such a map can be constructed variationally from the unnormalized posterior density, e.g.\ within a class of monotone triangular functions \cite{MarzoukEtAl2016}.
The approach pursued here differs in two respects.
First, the map is constructed from samples of the \emph{joint} distribution $\tdist$ of parameter and observation induced by the prior and the observation model~\eqref{eq:forward}, rather than from evaluations of the unnormalized posterior density.
Second, it is \emph{conditional}: the observation enters as a parameter, $T_X(\cdot\,; y)$.
The theoretical basis is the following disintegration result; it underlies the conditional sampling technique via triangular transports in \cite[Sec.~7, Lem.~1]{MarzoukEtAl2016}, where it is stated in finite dimensions; we state it in the Banach space formulation of \textcite{baptista2024conditional}.

Let $\mathcal{X}$, $\mathcal{Y}$ be separable Banach spaces, let
$\mathcal{S} \coloneqq \mathcal{X} \times \mathcal{Y}$ carry their product
Borel $\sigma$-algebra, and denote by $\mathcal{P}(\cdot)$ the corresponding
sets of Borel probability measures, with $\mu_X$, $\mu_Y$ denoting the marginals of
$\mu \in \mathcal{P}(\mathcal{S})$.
By disintegration we can rewrite every $\mu \in \mathcal{P}(\mathcal{S})$ as
\[
    \mu(\mathrm dx\, \mathrm dy)
    =
   \mu(\mathrm dx \mid y)\,
   \mu_Y(\mathrm dy)
\]
where the (regular version of the) conditional distribution $\mu(\cdot \mid \cdot) \colon \mathcal B(\mathcal X) \times \mathcal Y \to [0,1]$ is uniquely determined $\mu_Y$-almost everywhere.

\begin{thm} \label{thm:triangular}
  Let $\rdist = \rdist_X \otimes \rdist_Y \in \mathcal{P}(\mathcal{S})$ be a product
  reference measure and $\tdist \in \mathcal{P}(\mathcal{S})$ a target measure.
  Let $T \colon \mathcal{S} \to \mathcal{S}$ be a measurable block-triangular map of
  the form
  \begin{equation}
    \label{eq:blocktriangular}
    T(x, y) =
    \begin{pmatrix}
      T_X(x; y) \\ y
    \end{pmatrix}.
  \end{equation}
  If $T_\ast \rdist = \tdist$, then $\rdist_Y = \tdist_Y$ and
  \begin{equation*}
    T_X(\cdot\,; y)_\ast\, \rdist_X = \tdist(\cdot \mid y)
    \qquad \text{for } \tdist_Y\text{-a.e. } y \in \mathcal{Y}.
  \end{equation*}
\end{thm}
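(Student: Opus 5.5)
The plan is to identify the $Y$-marginal first and then verify that the family $y \mapsto T_X(\cdot\,;y)_\ast \rdist_X$ is a version of the conditional distribution of $\tdist$. Uniqueness of disintegrations then gives the $\tdist_Y$-a.e.\ statement.

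\emph{Step 1 (marginals).} Let $P_Y(x,y)=y$ be the projection. By the block-triangular form \eqref{eq:blocktriangular}, $P_Y\circ T = P_Y$. Hence
\[
\tdist_Y = (P_Y)_\ast\tdist = (P_Y\circ T)_\ast\rdist = (P_Y)_\ast\rdist = \rdist_Y .
\]

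\emph{Step 2 (candidate kernel).} Define $\kappa(A,y)\coloneqq \rdist_X\bigl(\{x : T_X(x;y)\in A\}\bigr)$ for $A\in\mathcal B(\mathcal X)$. Measurability of $y\mapsto\kappa(A,y)$ follows from joint measurability of $T$: the set $\{(x,y): T_X(x;y)\in A\}$ is product-measurable. By the measurable-sections part of Tonelli's theorem, its $\rdist_X$-measure of $y$-sections is measurable in $y$. For fixed $y$, $\kappa(\cdot,y)$ is a probability measure, since it is a pushforward. So $\kappa$ is a Markov kernel.

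\emph{Step 3 (disintegration identity).} For $A\in\mathcal B(\mathcal X)$ and $B\in\mathcal B(\mathcal Y)$, I would compute using the product structure of $\rdist$ and Fubini:
\[
\tdist(A\times B)=\rdist\bigl(T^{-1}(A\times B)\bigr)=\int_B\int_{\mathcal X}\mathbf 1_A\bigl(T_X(x;y)\bigr)\,\rdist_X(\mathrm dx)\,\rdist_Y(\mathrm dy)=\int_B \kappa(A,y)\,\tdist_Y(\mathrm dy).
\]
The last equality uses Step 1. Rectangles form a $\pi$-system generating $\mathcal B(\mathcal S)$; this uses separability, so that $\mathcal B(\mathcal X\times\mathcal Y)=\mathcal B(\mathcal X)\otimes\mathcal B(\mathcal Y)$. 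By Dynkin's $\pi$-$\lambda$ theorem the two measures $\tdist(\mathrm dx\,\mathrm dy)$ and $\kappa(\mathrm dx,y)\,\tdist_Y(\mathrm dy)$ agree on $\mathcal S$. Therefore $\kappa$ is a regular conditional distribution of $\tdist$ given $y$.

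\emph{Step 4 (uniqueness).} Regular conditional distributions are unique $\tdist_Y$-a.e. when $\mathcal X$ is a separable Banach space, hence standard Borel with a countably generated $\sigma$-algebra. The argument is as follows. For each $A$ in a countable generating $\pi$-system, the identity in Step 3 gives $\kappa(A,y)=\tdist(A\mid y)$ for $\tdist_Y$-a.e.\ $y$. Take a countable union of the exceptional null sets. Off that union, the two measures $\kappa(\cdot,y)$ and $\tdist(\cdot\mid y)$ agree on a generating $\pi$-system and hence everywhere. This yields $T_X(\cdot\,;y)_\ast\rdist_X=\tdist(\cdot\mid y)$ for $\tdist_Y$-a.e.\ $y$.

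The main technical point, though routine, is this measurability and uniqueness bookkeeping: the joint measurability of the kernel in Step 2 and the countable generator in Step 4. The transport computation itself is immediate from the triangular structure and the product reference measure.
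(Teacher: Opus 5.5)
Your proof is correct and follows the same route as the paper. Both arguments get $\rdist_Y=\tdist_Y$ from the fact that $T$ leaves the $y$-component unchanged, then use Fubini with the product reference measure to show that $y\mapsto T_X(\cdot\,;y)_\ast\rdist_X$ disintegrates $\tdist$ against $\tdist_Y$, and finish with a.e.\ uniqueness of disintegrations; the only differences are that you verify the identity on rectangles and extend by Dynkin where the paper works directly with slices $B_y$ of arbitrary $B\in\mathcal B(\mathcal S)$, and you spell out the measurability and countable-generator uniqueness bookkeeping that the paper simply cites.
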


\begin{proof}
  Since $T$ leaves the second component unchanged, $T_\ast \rdist = \tdist$
  implies that the $\mathcal{Y}$-marginals coincide, hence $\rdist_Y = \tdist_Y$.
  For $B \in \mathcal{B}(\mathcal{S})$ and $y \in \mathcal{Y}$, define the slice
  $B_y \coloneqq \{x \in \mathcal{X} : (x, y) \in B\}$.
  The triangular structure of $T$ gives
  $[T^{-1}(B)]_y = T_X(\cdot\,; y)^{-1}(B_y)$, and hence, by
  $T_\ast \rdist = \tdist$, the product structure of $\rdist$, and Fubini's theorem,
  \begin{equation*}
    \tdist(B) = \rdist\bigl(T^{-1}(B)\bigr)
    =
    \int_{\mathcal{Y}} \rdist_X\bigl( T_X(\cdot\,; y)^{-1}(B_y) \bigr)
      \, \rdist_Y(\mathrm{d}y)
    =
    \int_{\mathcal{Y}} \bigl( T_X(\cdot\,; y)_\ast\, \rdist_X \bigr)(B_y)
      \, \tdist_Y(\mathrm{d}y).
  \end{equation*}
  On the other hand, the disintegration of $\tdist$ with respect to its
  $\mathcal{Y}$-marginal yields
  \[
    \tdist(B) = \int_{\mathcal{Y}} \tdist(B_y \mid y)\, \tdist_Y(\mathrm{d}y),
  \]
  and the ($\tdist_Y$-a.e.) uniqueness of the disintegration  implies
  $T_X(\cdot\,; y)_\ast\, \rdist_X = \tdist(\cdot \mid y)$ for $\tdist_Y$-a.e.\ $y$.
\end{proof}

Theorem~\ref{thm:triangular} converts the approximation of the posterior for all possible observations into a single transport problem on $\mathcal{S}$, subject to the constraint that the observation component be carried along
unchanged.

For the construction of the transport map $T_X$, we focus again on the finite-dimensional case $\mathcal{X}_h \cong \mathbb R^d$ and seek $T_X$ in a parametrized family $\{T_\theta\colon \theta \in \Theta\}$, $\Theta \subseteq \mathbb R^p$  of mappings $T_\theta\colon \mathbb R^d \times \mathcal Y \to \mathbb R^d$ (e.g., neural networks where $\theta$ consists of the weights and biases), and fix the reference measure as
\begin{equation}
  \label{eq:reference}
  \rdist = \eta \otimes \tdist_Y,
  \qquad
  \eta
  \coloneqq
  \mathcal N(0, \mathrm I_d),
\end{equation}
with $\mathrm I_d\in\mathbb R^{d\times d}$ denoting  the identity matrix.
Note that this already builds in the marginal matching $\rdist_Y = \tdist_Y$ from the start.
In the following let us denote the Lebesgue densities of $\rdist$ and $\tdist$ again by $\rdens(x, y) = \rdens_X(x)\, \rdens_Y(y)$ and
$\tdens(x, y) = \tdens(x \mid y)\, \tdens_Y(y)$, respectively, so that $\rdens_X$ is the standard Gaussian density and $\tdens^y(\cdot) = \tdens(\cdot \mid y)$ the posterior density.
Moreover, we introduce $\fmdist^y \coloneqq T_\theta(\cdot\,; y)_\ast\, \eta$ as the approximation of $\tdens^y$ resulting from the transport map $T_\theta$.
It remains to determine $\theta$.
A common approach in the machine learning literature is the minimization of the joint KL divergence between $T_\ast \rdist$ and $\tdist$, which is equivalent to maximum likelihood estimation, as we explain below.
Due to the product structure of \eqref{eq:reference}
and the triangular structure of $T(x, y) = (T_\theta(x; y), y)$ the pushforward density under the triangular map factorizes as
$[T_\ast \rdens](x, y) = [T_\theta(\cdot\,; y)_\ast\, \rdens_X](x)\, \rdens_Y(y)$, which in turn yields
\begin{equation*}
\begin{split}
    D_{\mathrm{KL}}\bigl(\tdist \,\|\, T_\ast \rdist\bigr)
    & \coloneqq
    \evalt{(x, y) \sim \tdist}{\log \frac{\tdens(x,y)}
                      {[T_\ast\rdens](x,y)}}
    =
    \evalt{(x, y) \sim \tdist}{\log \frac{\tdens^y(x)\, \tdens_Y(y)}
                      {[T_\theta(\cdot\,; y)_\ast\, \rdens_X](x)\, \rdens_Y(y)}} \\[.5em]
    &=
    \evalt{(x, y) \sim \tdist}{\log \frac{\tdens^y(x)}{[T_\theta(\cdot\,; y)_\ast\, \rdens_X](x)}}
     =
    \evalt{(x, y) \sim \tdist}{\log \tdens^y(x)}
    -\evalt{(x, y) \sim \tdist}{\log \fmdens^y(x)},
\end{split}
\end{equation*}
i.e., minimizing the joint divergence is equivalent to minimizing the expected negative log-likelihood
\begin{equation} \label{eq:nll_loss}
    L(\theta)
    =
    - \evalt{(x, y) \sim \tdist}{
      \log \fmdens^y(x)
      }
\end{equation}
over the joint samples.
In practice, this loss is estimated empirically  by Monte Carlo sampling, requiring only the ability to draw samples from $\tdist$.
Moreover, the well-known `chain rule' for the KL divergence yields
\begin{align*}  \label{eq:kl_decomposition}
    D_{\mathrm{KL}}\bigl(\tdist \,\|\, T_\ast \rdist\bigr)
    &=
    \underbrace{D_{\mathrm{KL}}\bigl(\tdist_Y \,\|\, \rdist_Y \bigr)}_{=0}
    +\;
    \evalt{y \sim \tdist_Y}{D_{\mathrm{KL}}\bigl(\tdist^y \,\|\, T_\theta(\cdot\,; y)_\ast\,\eta\bigr)}.
\end{align*}
Thus, minimizing the joint divergence also controls the expected posterior
error over observations from the prior predictive distribution.

Three features of this approach are central to the remainder of the paper.
First, the joint distribution $\tdist$ --- unlike any individual posterior --- is easy to sample: drawing $x \sim \tdist_X$ from the prior and simulating $y$ through the observation model \eqref{eq:forward} produces joint samples at the cost of one forward solve each, without a single likelihood or posterior density evaluation.
The training data for the learned transport are therefore cheap and, notably, available even when the
likelihood is intractable.
Second, maximum-likelihood estimation, i.e., minimization of the KL divergence, requires the computation of the pushforward density $\fmdens^y$ and of its gradients with respect to $\theta$.
The feasibility and computational efficiency of this may depend on the architecture of $T_\theta$.
Third, the training objective \eqref{eq:nll_loss} yields only $\tdist_Y$-\emph{almost everywhere} guarantees for the approximation of $\tdist^y$ by $\fmdist^y$.
Pointwise guarantees are available under additional assumptions \cite{Hagemann2023}.
In practice, however, training in expectation with respect to $\tdist$ means that for observations $y$ in the tail of the prior predictive distribution $\tdist_Y$, the approximation $\fmdist^y$ can be very poor, since with high probability no training data reach such regions.

In the following, we present a convenient approach for transport maps $T_\theta$ with computable pushforward density $\fmdens^y$ and explain how they can be trained efficiently via the flow matching objective instead of the maximum-likelihood objective~\eqref{eq:nll_loss}.

\subsection{Neural transport maps and their training via Bayesian flow matching}
\label{subsec:bfm}

Generative machine learning provides several flexible function classes for constructing a block-triangular map with $T_\ast \rdist \approx \tdist$.
Here, we represent the transport map as the solution operator, or \emph{flow map}, of an ODE
\begin{equation}
  \label{eq:node}
  \frac{\mathrm{d}}{\mathrm{d}t}\, x(t) = v_\theta\bigl(t, x(t); y\bigr),
  \qquad t \in [0, 1], \qquad x(0) = x_0 \sim \eta,
\end{equation}
whose right-hand side $v_\theta \colon [0,1] \times \mathbb{R}^d \times
\mathbb{R}^{\nobs} \to \mathbb{R}^d$ is a neural network with weights and biases denoted by $\theta \in \mathbb R^p$ taking as inputs time, state, and
the observation.
Equation~\eqref{eq:node} is also known as a \emph{neural ODE} \cite{ChenEtAl2018}.
If $v_\theta$ is continuous and, in the state $x$, continuously differentiable and  globally Lipschitz continuous, the Picard--Lindel\"of theorem guarantees the existence of a unique solution $x(\cdot\,;x_0,y)\colon [0,1]\to\mathbb R^d$ of \eqref{eq:node} and that the solution map
\begin{equation*}
  T_\theta(x_0; y) \coloneqq x(1; x_0, y)
\end{equation*}
is a $C^1$-diffeomorphism of $\mathbb{R}^d$ for every fixed $y$.
Setting
$T(x, y) = (T_\theta(x; y), y)$ yields exactly the block-triangular
structure \eqref{eq:blocktriangular}.
Moreover, due to the ODE representation of the transport map, the density of the pushforward
$\fmdist^y = T_\theta(\cdot\,; y)_\ast\, \eta$ is straightforward to compute.
To this end, let $p_t\colon \mathbb R^d \times \mathcal Y\to[0,\infty)$ denote the Lebesgue density of the pushforward measure $T_{\theta}(t,x_0; y)_\ast \eta$ where $T_{\theta}(t,x_0; y) \coloneqq x(t; x_0, y)$. Then, by the instantaneous
change-of-variables formula \cite{ChenEtAl2018}, we have
\begin{equation}
  \label{eq:icov}
  \frac{\partial \log p_t(x(t); y)}{\partial t}
  = - \nabla \cdot v_\theta\bigl(t, x(t); y\bigr),
\end{equation}
so that integrating \eqref{eq:node} backward from a point $x$ and
accumulating the divergence along the trajectory yields
\begin{equation}
  \label{eq:backward_likelihood}
  \log \fmdens^y(x)
  = \log \rdens_X\bigl(x(0)\bigr)
    + \int_0^1 \nabla \cdot v_\theta\bigl(t, x(t); y\bigr)\, \mathrm{d}t .
\end{equation}
The divergence $\nabla\cdot v_\theta$ is computed by automatic differentiation at the cost of $d$ matrix-vector products with the (exact) Jacobian per evaluation.
This tractable density distinguishes deterministic flows from stochastic samplers such as diffusion models simulated by their reverse-time stochastic differential equation.
It enables Metropolization as well as quantitative diagnostics (Sections~\ref{subsubsec:imh} and \ref{subsec:diagnostics}).
Moreover, it allows one to determine the parameters $\theta$ of the velocity field by maximum likelihood, i.e., by minimizing the loss~\eqref{eq:nll_loss} with respect to $\theta$, which is equivalent to minimizing the KL divergence as shown in Section~\ref{subsec:triangular}.
In fact, this is the approach for training in the  conditional continuous normalizing flows (CCNF) literature \cite{wang2023efficient}.
The computational difficulty here, however, is that every evaluation of \eqref{eq:nll_loss} and of its gradient, as needed at every step of stochastic gradient descent, requires solving the ODE \eqref{eq:node} backward via \eqref{eq:backward_likelihood} for every sample in the current batch.
The gradient must then be obtained by differentiating through the unrolled numerical solver or by the adjoint method \cite{ChenEtAl2018}, which has stability issues of its own.
In addition, the divergence $\nabla \cdot v_\theta$ must be evaluated along each trajectory; in high dimensions it is approximated by stochastic trace estimators \cite{grathwohl2018}, and its gradient with respect to $\theta$ involves mixed second derivatives of the network.
For inverse problems, where $\mathcal{X}$ is discretized into a finely resolved space $\mathcal{X}_h \cong \mathbb{R}^d$ with large $d$, these nested ODE solves and second-order terms dominate the cost and render maximum-likelihood training prohibitively expensive.

\paragraph{Bayesian flow matching} Flow matching \cite{LipmanEtAl2023,liu2022flow,AlbergoVandenEijnden2023,AlbergoEtAl2023} retains the neural ODE representation \eqref{eq:node} but replaces the maximum-likelihood objective
\eqref{eq:nll_loss} by a least-squares regression that requires no ODEs to be solved and no traces to be evaluated during training.
Its conditional extension \cite{chemseddine2025CondFM,hagemann2025conditional,SteidlWald} is referred to as \emph{guided} or \emph{Bayesian} flow matching.
To derive the flow matching objective, let $X_0 \sim \eta$ and $(X_1, Y) \sim \tdist$ be independent\footnote{Other couplings, e.g., optimal couplings with respect to conditional Wasserstein distances \cite{chemseddine2025CondFM}, are possible but not always computationally feasible.}
and connect the endpoints by linear interpolation in the parameter
component,
\begin{equation} \label{eq:interpolant}
  X_t \coloneqq (1 - t)\, X_0 + t\, X_1, \qquad t \in [0, 1],
\end{equation}
while the observation component is held fixed at $Y$.
For each fixed observation $y\in\mathcal Y$, \eqref{eq:interpolant} defines a stochastic process on the
parameter space \cite{AlbergoEtAl2023,SteidlWald} whose time marginals $p_t(\cdot \mid y) \coloneqq \Law(X_t \mid Y = y)$ connect the reference to the posterior: $p_0(\cdot \mid y) = \eta$ since $X_0$ is
independent of $Y$, and $p_1(\cdot \mid y) = \tdist^y$ since, given $Y = y$, the terminal point $X_1$ is distributed according to the posterior $\tdist^y$.
The stochastic process itself, however, is not causal --- following a trajectory requires knowing its endpoint $X_1$ in advance.
The idea of flow matching is to replace it by a neural flow, resulting from a neural ODE \eqref{eq:node} with initial state $X_0 \sim \eta$ and the \emph{same time marginals} as the original process.
Since the sample paths of \eqref{eq:interpolant} are affine in $t$, the pathwise derivative $\dot{X}_t = X_1 - X_0$ exists trivially but again depends on the terminal state $X_1$.
To obtain a causal process with the same time marginals, one conditions $\dot{X}_t$ on the current state $X_t$, which yields the velocity field
\begin{equation} \label{eq:cond_expectation}
    v^\ast(t, x; y)
    =
    \ev{\, \dot{X}_t \;\big|\; X_t = x,\; Y = y \, }
    =
    \ev{\, X_1 - X_0 \;\big|\; X_t = x,\; Y = y \,}.
\end{equation}
It can be shown under mild regularity assumptions, see \cite[Theorem 3.3]{liu2022flow} or \cite[Theorem 6.3]{SteidlWald}, that this velocity field plugged into \eqref{eq:node} yields the same time marginals $p_t(\cdot \mid y) = \Law(X_t \mid Y = y)$, since it satisfies the corresponding \emph{continuity equation}
\begin{equation} \label{eq:continuity}
  \partial_t p_t + \nabla \cdot (p_t\, v^\ast) = 0,
\end{equation}
which is the Eulerian form of \eqref{eq:icov}.
By the defining property of conditional expectation as the best approximation in the $L^2$ sense, the velocity field \eqref{eq:cond_expectation} is the unique minimizer, over all measurable vector fields $v$, of $\ev{\|\dot{X}_t - v(t, X_t; Y)\|_2^2}$.
The flow matching objective is therefore not an ad hoc regression but exactly this best
approximation problem, restricted to the parametric class $v_\theta$:
Bayesian flow matching trains the network by
\begin{equation} \label{eq:fm_loss}
  \min_\theta\;
  \ev{ \bigl\| v_\theta(t, X_t; Y) - (X_1 - X_0) \bigr\|_2^2},
\end{equation}
where the expectation is taken over $t \sim \mathcal{U}[0, 1]$, $X_0 \sim \eta$, and
$(X_1, Y) \sim \tdist$.
In practice, the expectation in~\eqref{eq:fm_loss} is estimated by plain Monte Carlo: sampling $t \sim \mathcal{U}[0, 1]$ and $X_0 \sim \eta$ is trivial, and each sample $(X_1, Y) \sim \tdist$ requires one forward solve of~$F$.
The resulting objective is minimized by stochastic gradient descent; in contrast to~\eqref{eq:nll_loss}, its gradients require only standard backpropagation through the network, with no ODE solves and no divergence evaluations.
If the trained velocity field $v_\theta$ coincides with the ideal field $v^\ast$, then, since $v^\ast$ reproduces the time marginals and $p_1(\cdot \mid y) = \tdist^y$, the flow map $T_\theta(\cdot\,; y)$ pushes $\eta$ forward to $\tdist^y$ for $\tdist_Y$-a.e.\ $y$.
Consequently, the block-triangular map $(x, y) \mapsto (T_\theta(x; y), y)$ pushes $\rdist = \eta \otimes \tdist_Y$ forward to $\tdist$ (cf.\ the proof of Theorem~\ref{thm:triangular}).
Flow matching thus trades the expensive objective \eqref{eq:nll_loss} for a cheap
regression whose minimizer provably provides the desired transport.
However, this argument does not control the error incurred by finite network capacity, finite training data, inexact optimization, and ODE discretization at sampling time.
A priori error bounds are available for the unconditional case, see, e.g., \cite{BentonEtAl2024}, while the conditional setting is the subject of ongoing analysis.
Moreover, such bounds are stated in terms of the $L^2$ distance between the learned and the ideal velocity field, which cannot be evaluated for a trained network since $v^\ast$ is unknown.
A posteriori diagnostics such as those of Section~\ref{subsec:diagnostics} are therefore highly relevant in applications.

\paragraph{Computational aspects} Applying flow matching for posterior sampling consists of two phases.
In the \emph{offline phase (training)}, one generates joint samples $(x_1^{(i)}, y^{(i)}) \sim \tdist$ by prior sampling and forward simulation, and minimizes \eqref{eq:fm_loss} by stochastic gradient descent over
minibatches of $(t, x_0, x_1, y)$.
In the \emph{online phase (sampling)}, given an observation $y$, one draws samples $x_0 \sim \eta$ and integrates \eqref{eq:node} with the trained field $v_\theta(\cdot, \cdot\,; y)$ using a standard ODE solver.
The resulting points $T_\theta(x_0; y)$ are independent draws from the approximation $\fmdist^y$ of $\tdist^y$; they can be generated in parallel, each at the cost of one ODE solve, i.e., tens to hundreds of network evaluations.
Neither forward solves nor likelihood evaluations are required.
When necessary, the density of the approximation is available through
\eqref{eq:backward_likelihood} at the cost of a further ODE solve with $d$ Jacobian–vector products per step, which dominates the cost of Metropolization (cf.\ \Cref{subsubsec:visual}).
Pseudocode for data generation, training and sampling is given in \cref{alg:fm_data,alg:fm_train,alg:fm_sample} in \cref{app:fm_alg}.

\begin{remark}[Likelihood-free inference] \label{rem:likelihood_free}
Both training and sampling require only draws from the joint distribution $\tdist$, i.e., prior samples pushed through the observation model \eqref{eq:forward}; the likelihood is never evaluated.
Bayesian flow matching therefore applies verbatim to simulation-based (likelihood-free) inverse problems in which
$\tdens(y \mid x)$ is intractable but forward simulation is feasible \cite{CranmerEtAl2020,WildbergerEtAl2023,baptista2024conditional}, such as the stochastic Lotka--Volterra model
of Section~\ref{subsec:lotka_volterra}.
In contrast, the Metropolization of Section~\ref{subsubsec:imh} and the diagnostics of  Section~\ref{subsec:diagnostics} require the unnormalized posterior density and are hence available only for tractable likelihoods.
\end{remark}

\subsection{Flow-based independent Metropolis--Hastings} \label{subsubsec:imh}

Bayesian flow matching provides an approximation to the posterior $\tdist^y$, which is easy to sample.
However, the samples generated by flow matching are biased whenever $v_\theta$ differs from the ideal field $v^\ast$ or the ODE~\eqref{eq:node} is solved inexactly, which is typically the case in practice.
Often it is hard to judge a priori how close the flow matching approximation is to the true posterior $\tdist^y$ for given data $y$.
A combination of flow matching and Metropolization can remove this bias and provide asymptotically exact samples from the posterior $\tdist^y$.
To this end, we use the flow matching approximation as the proposal distribution within IMH, i.e., $q$ is the pushforward $\fmdist^y = T_\theta(\cdot\,; y)_\ast\, \eta$ of the standard Gaussian $\eta$.
The resulting Markov chain $(X^{(k)})_{k\in\mathbb N}$ proceeds as follows: given the current state $X^{(k)} = x$, it proposes $\tilde x = T_\theta(x_0; y)$ with a fresh draw $x_0 \sim \eta$, and accepts $\tilde x$ with log acceptance probability
\begin{equation*}
    \begin{aligned}
    \log \alpha(x,\tilde x)
    &=
    \min \left\{ 0, \, \log \tdens^y(\tilde x) - \log \tdens^y(x) + \log \fmdens^y(x) - \log \fmdens^y(\tilde x)\right\}\\
    & =
    \min \left\{ 0, \, \log w(\tilde x) - \log w(x)\right\},
    \end{aligned}
\end{equation*}
where we set $w(x) \coloneqq \tdens^y(x) / \fmdens^y(x)$.
Thus, we need to compute the log flow matching density $\log \fmdens^{y}(\tilde x)$ via numerical integration of the corresponding ODE \eqref{eq:backward_likelihood}, see above.
This approach combines the generative capability of normalizing flows, which yields a flexible, high-dimensional proposal that closely approximates the posterior geometry, with the asymptotic exactness of MCMC methods.
The proposal $\tilde{x}$ is generated independently of the current state $x^{(k)}$, allowing for parallel sampling of candidates prior to the acceptance step.
Normalizing flows have been used to enhance MCMC before, e.g., in \cite{grenioux2023sampling,GabrieEtAl2022,BrofosEtAl2022}.

\subsection{Diagnostic metrics} \label{subsec:diagnostics}
We now provide two sampling-based diagnostics for the accuracy of the flow matching approximation to $\tdist^y$.
Both diagnostics exploit that the density $\fmdens^y(x)$ is computationally tractable.

The first diagnostic is based on the flow-based IMH mentioned above.
In fact, the mean acceptance rate of the (stationary) Markov chain generated by IMH is closely related to the TV distance between proposal $q$ and target distribution $\tdist^y$.

\begin{prop}[\cite{EgeaEtAl2026}] \label{prop:imh_tv}
Let the target $\tdist^y$ and the proposal $q$ have Lebesgue densities, again denoted by $\tdist^y$ and $q$, respectively, and let
\begin{equation*}
    \bar{\alpha}
    \coloneqq \evalt{X \sim \tdist^y,\, \widetilde{X} \sim q}
        {\alpha(X, \widetilde{X})}
\end{equation*}
denote the mean acceptance rate of the stationary IMH chain.
Then
\begin{equation} \label{eq:imh_sandwich}
    1 - 2\, d_{\mathrm{TV}}(q, \tdist^y)
    \;\le\; \bar{\alpha} \;\le\;
    1 - d_{\mathrm{TV}}(q, \tdist^y).
\end{equation}
\end{prop}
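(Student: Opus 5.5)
The plan is to write $\bar\alpha$ as an integral of a pointwise minimum and then bound it from both sides by elementary inequalities involving $\int \min\{\tdist^y, q\}\,\mathrm dx = 1 - d_{\mathrm{TV}}(q,\tdist^y)$.

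\emph{Rewriting $\bar\alpha$.} Write $\pi \coloneqq \tdist^y$ and $w \coloneqq \pi/q$, so that $\alpha(x,\tilde x) = \min\{1, w(\tilde x)/w(x)\}$. Substituting $\pi(x) = w(x)q(x)$ gives
\begin{equation*}
\bar\alpha = \iint \min\{1, w(\tilde x)/w(x)\}\, w(x)\, q(x)\, q(\tilde x)\,\mathrm dx\,\mathrm d\tilde x = \iint \min\{w(x), w(\tilde x)\}\, q(x)\, q(\tilde x)\,\mathrm dx\,\mathrm d\tilde x .
\end{equation*}
So $\bar\alpha = \evalt{}{\min\{W,W'\}}$, where $W = w(X)$ and $W' = w(X')$ for $X, X'$ i.i.d.\ $\sim q$. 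Points where $q=0$ but $\pi>0$ are never proposed and contribute zero; I will treat them through the convention $w=\infty$ there, or simply restrict integrals to $\{q>0\}$. The TV distance has the corresponding expression
\begin{equation*}
1 - d_{\mathrm{TV}}(q,\pi) = \int \min\{\pi, q\}\,\mathrm dx = \evalt{}{\min\{W,1\}} .
\end{equation*}

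\emph{Upper bound.} Since $\min\{W,W'\} \le W'$ and $\min\{W,W'\} \le W$, I will condition on $X$: $\evalt{}{\min\{W,W'\}\mid W} \le \min\{W, \evalt{}{W'}\}$. Here $\evalt{}{W'} = \int_{\{q>0\}} \pi \le 1$, so $\bar\alpha \le \evalt{}{\min\{W,1\}} = 1 - d_{\mathrm{TV}}$.

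\emph{Lower bound.} I will use the pointwise inequality
\begin{equation*}
\min\{a,b\} \ge \min\{a,1\} + \min\{b,1\} - 1 \qquad \text{for } a,b\ge 0 .
\end{equation*}
To verify it, set $\bar a = \min\{a,1\}$ and $\bar b = \min\{b,1\}$. Then $\min\{a,b\} \ge \min\{\bar a,\bar b\} \ge \bar a + \bar b - 1$, where the last step holds because $\max\{\bar a,\bar b\} \le 1$. Taking expectations gives $\bar\alpha \ge 2(1 - d_{\mathrm{TV}}) - 1 = 1 - 2\,d_{\mathrm{TV}}$.

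\emph{Main obstacle.} The one real difficulty is choosing the right symmetric rewriting and the two elementary inequalities. The other point needing care is handling the sets where $q$ or $\pi$ vanishes (support mismatch), to make sure that no mass is lost in the identity $1 - d_{\mathrm{TV}} = \evalt{}{\min\{W,1\}}$.
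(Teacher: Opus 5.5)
Your proof is correct, and it reaches the bounds by a partly different route from the paper's. With $f=\tdens^y$ and $g=q$ as in the paper's proof, your integrand multiplies out to $\min\{w(x),w(\tilde x)\}\,g(x)g(\tilde x)=\min\{f(x)g(\tilde x),\,f(\tilde x)g(x)\}$, which is the paper's starting point. The paper then uses $\min\{a,b\}=\tfrac12(a+b)-\tfrac12|a-b|$ to identify $1-\bar\alpha$ with $D=d_{\mathrm{TV}}(\tdist^y\otimes q,\,q\otimes\tdist^y)$, the TV distance between the joint law of (state, proposal) and its swap, and bounds $D$ from both sides.

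For the lower bound the two arguments are in fact the same. Multiply your inequality $\min\{a,b\}\ge\min\{a,1\}+\min\{b,1\}-1$ by $g(x)g(\tilde x)$ and expand every minimum in the same way. You obtain exactly the paper's triangle inequality $|f(x)g(\tilde x)-f(\tilde x)g(x)|\le g(\tilde x)|f(x)-g(x)|+g(x)|f(\tilde x)-g(\tilde x)|$.

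For the upper bound the forms genuinely differ. The paper proves $D\ge d_{\mathrm{TV}}(q,\tdist^y)$ dually, testing against $\mathbf{1}_A(\tilde x)-\mathbf{1}_A(x)$ with the extremal set $A=\{f>g\}$. You argue primally by integrating out the proposal: $\ev{\min\{W,W'\}\mid W}\le\min\{W,\ev{W'}\}$ with $\ev{W'}\le 1$.

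Your version needs no extremal set. It is phrased in the importance weights that actually enter the acceptance ratio and the $\chi^2$ diagnostic. It also gives the slightly sharper intermediate bound $\bar\alpha\le\ev{\min\{W,\ev{W'}\}}$ when $\tdist^y(\{q=0\})>0$. The paper's version never divides by $q$, so it needs no support bookkeeping at all, and it exhibits $1-\bar\alpha$ itself as a TV distance on the product space.

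One small precision in your support discussion: points with $g=0<f$ are not merely ``never proposed''. They can be the current state, and they contribute nothing because $g(x)=0$ appears in the numerator of the acceptance ratio. Your convention $w(x)=\infty$ encodes this correctly.
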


These bounds are based on \cite[Theorem 1]{Neklyudov2019} and formally shown in \cite{EgeaEtAl2026} with applications in multimodal sampling; for completeness, a short proof is given in \cref{app:imh_tv_proof}.
For our purposes the decisive observation is the \emph{reverse} reading of \eqref{eq:imh_sandwich}: running an IMH chain
with proposal $q = \fmdist^{y} = T_\theta(\cdot\,; y)_\ast\eta$ for $n$ iterations, resulting in a path $\{x^{(k)}\}_{k=1}^n$ based on the proposals $\{\tilde x^{(k)}\}_{k=1}^n$, we can estimate $\bar{\alpha}$ empirically by
\[
    \overline{\alpha}
    \approx
    \frac 1n \sum_{k=1}^n \alpha(x^{(k)},\tilde x^{(k)})
\]
and obtain a computable two-sided bound
\begin{equation} \label{eq:tv_from_acceptance}
  \frac{1 - \bar{\alpha}}{2}
  \;\le\; d_{\mathrm{TV}}\bigl(\fmdist^{y} ,\, \tdist^y\bigr) \;\le\;
  1 - \bar{\alpha}.
\end{equation}
Note that this also yields a lower bound on the KL divergence between $\tdist^y$ and $\fmdist^{y}$ by means of the well-known \emph{Pinsker inequality}:
\[
    \dtv{\mu_1}{\mu_2} \le \sqrt{\frac12 \dkl{\mu_1}{\mu_2}},
\qquad \text{hence} \qquad
    \dkl{\tdist^y}{ \fmdist^y }
    \geq
    \frac 12\left(1 - \overline{\alpha}\right)^2.
\]

A computable upper bound on the above KL divergence can be derived via the $\chi^2$ divergence and its relation to \emph{importance sampling}. It is well known \cite{GibbsSu2002} that
\[
    \dkl{\mu_1}{ \mu_2}
    \leq
    \log\left( 1 + \dchisq{\mu_1}{ \mu_2} \right),
\]
where the $\chi^2$ divergence is defined by
\[
    \dchisq{\mu_1}{ \mu_2}
    \coloneqq
    \evalt{x \sim \mu_2}{\left(\frac{\mathrm d\mu_1}{\mathrm d\mu_2}(x) -1\right)^2
    }
    =
    \evalt{x \sim \mu_2}{\left(\frac{\mathrm d\mu_1}{\mathrm d\mu_2}(x)\right)^2
    }-1
\]
if the Radon--Nikodym derivative $\frac{\mathrm d\mu_1}{\mathrm d\mu_2}$ exists, and $\dchisq{\mu_1}{ \mu_2} \coloneqq \infty$ otherwise.
Note that the $\chi^2$ divergence may be infinite even if the Radon–Nikodym derivative exists.
Now, considering the normalized \emph{importance weights}
\begin{equation}
    \bar{W}_i = W_i / \sum_{j=1}^n W_j,
    \qquad
    W_i \coloneqq \frac{\tdens^y({X}_i)}{\fmdens^y({X}_i)}
\end{equation}
for i.i.d.\ flow matching samples $X_i \sim  \fmdens^y$, it is also well known from importance sampling theory \cite[Section 2.3]{AgapiouEtAl2017} that by means of the law of large numbers we have almost surely
\[
    \rho
    \coloneqq
    \lim_{n\to \infty} n \sum_{i=1}^n \bar{W}_i^2
    =
    \lim_{n\to \infty} \frac{\frac 1n \sum_{i=1}^n W^2_i}{(\frac 1n\sum_{i=1}^n {W}_i)^2}
    =
    \evalt{x \sim \fmdens^y}{\left(\frac{\tdist^y(x)}{\fmdens^y(x)}\right)^2
    }
\]
given that $\dchisq{\tdist^y}{\fmdens^y} < \infty$.
Thus, $\rho \approx 1$ indicates that $\dkl{\tdist^y}{ \fmdist^y } \approx 0$, implying the flow matching approximation is highly accurate.
Conversely, $\rho \gg 1$ indicates a significant mismatch.

In practice, given a batch of flow matching samples $x_1,\ldots, x_n \in \mathcal X$ with associated importance weights $\bar{w}_1,\ldots,\bar{w}_n$ we can estimate
\begin{equation} \label{eq:kl_bounds}
    \frac 12\left(1 - \overline{\alpha}\right)^2
    \leq
    \dkl{\tdist^y}{ \fmdist^y }
    \leq \log(\rho)
    \ \approx \
    \log\left(n \sum_{i=1}^n \bar{w}_i^2 \right)
\end{equation}
for $n$ sufficiently large.

We stress that \eqref{eq:tv_from_acceptance} and \eqref{eq:kl_bounds} are Monte Carlo estimates of bounds, not guaranteed bounds.
The estimator of $\bar\alpha$ refers to the stationary chain, but it is consistent by the ergodic theorem for any initial state; in our experiments we start the IMH chain from a flow matching sample and use the entire chain, since the influence of the initial state on the average vanishes like $1/n$ whenever the chain mixes well.
More importantly, the self-normalized estimator $n\sum_{i=1}^n \bar w_i^2$ of $\rho$ is consistent but tends to underestimate $\rho$ for finite $n$, since $\rho$ is dominated by rare samples with large weights, which a finite sample is likely to miss, particularly in higher dimensions, where the weights are heavy-tailed.
Indeed, $n\sum_{i=1}^n \bar w_i^2 \le n$, so the estimated upper bound in \eqref{eq:kl_bounds} never exceeds $\log n$.
When the flow matching approximation is poor, the empirical upper bound may therefore fall below the true Kullback–Leibler divergence.
It is reliable when the weights are well behaved, as in the example of \Cref{sec:toy}, where it encloses the values obtained by quadrature (\Cref{fig:toy_acceptance}).

In summary, we derived computable estimates of
lower and upper bounds on both the TV distance and the KL divergence between the true posterior and its flow matching approximation.

\begin{remark}
The term $n \sum_{i=1}^n \bar{W}_i^2 \geq 1$ is related to the so-called \emph{effective sample size} of importance sampling.
The latter is defined by $(\sum_{i=1}^n \bar{W}_i^2)^{-1}$ and quantifies the number of independent samples from $\tdist^y$ which yield the same statistical efficiency for Monte Carlo as the importance-weighted samples drawn from the proposal distribution.
Its ratio to $n$, the reciprocal of $n\sum_{i=1}^n \bar W_i^2$, is the sample efficiency used by \textcite{DaxEtAl2023} as a diagnostic for amortized neural posterior estimates in gravitational-wave inference, where low efficiency was found to flag, among other things, out-of-distribution data.
\end{remark}

\section{Numerical experiments} \label{sec:experiments}

This section applies the conditional flow model presented in \cref{sec:fm}, trained using Bayesian flow matching (FM), to four example problems.
The first example is a two-point boundary value problem with two unknown scalar parameters for which the posterior, and hence the TV distance and KL divergence of its flow matching approximation, can be computed by quadrature.
The second example is a stationary diffusion problem in one space dimension where the goal is to reconstruct a distributed diffusion coefficient.
The third example is the Bayesian formulation of electrical impedance tomography (EIT) \cite{dunlop2016bayesian}, a well-known challenging problem in the inverse problems community.
Finally, we consider a stochastic Lotka--Volterra problem which poses the challenge that no tractable likelihood is available and which was previously considered in \cite{wang2023efficient}, following the setup in \cite{wilkinson2012stochastic}.

For all four problems, we compare the learned posterior qualitatively and quantitatively against MCMC references where applicable.
Furthermore, we study how the accuracy of the learned posterior depends on factors such as the concentration of the true posterior and how typical the observation is under the prior predictive distribution $\tdist_Y$.
\subsection{Example with a two-dimensional parameter} \label{sec:toy}

We consider a very simple Bayesian inverse problem which allows us to illustrate many of the key concepts introduced in the previous sections.
It concerns the identification of two unknown parameters $\vx = (\xi_0, \xi_1)$ in a two-point boundary value problem for a stationary diffusion equation.
The first parameter $\xi_0 \in \R$ determines the spatially constant log-conductivity, while the second parameter $\xi_1 \in \R$ determines the Dirichlet data at the right-hand boundary.
With $z \in \DD =(0,1)$ denoting the spatial coordinate, the differential equation is given by
\begin{subequations}
\begin{equation} \label{eq:toy_pde}
    - e^{\xi_0} \, u''(z) = 1,
    \quad z \in \DD,
    \qquad u(0) = 0, \quad u(1) = \xi_1,
\end{equation}
where $u: [0,1] \to \R$ denotes the solution, which is easily verified to be
$u(z) = \frac{1}{2} \, e^{-\xi_0} \, z \, (1-z) + \xi_1 \, z$.
The observational data is given by noisy evaluations of the solution $u$ at two locations, i.e.,
\begin{equation}\label{eq:toy_obs}
    \vy
    =
    \begin{bmatrix}
        u(0.25)\\u(0.75)
    \end{bmatrix}
    +
    \vvarepsilon \quad \text{with } \vvarepsilon \sim \NN(0, 0.25 \, I_{2}).
\end{equation}
\end{subequations}
We place independent priors $\xi_0 \sim \NN(0, 1)$ and $\xi_1 \sim \UU(90,110)$ on the unknown parameters.
Then, for 20 observations, equally spaced on the line segment between $\vy^{1}=(20, 68)$ and $\vy^{20}=(27,71)$ in data space, we compute the posterior for each.
Samples from the prior predictive distribution $\tdist_Y$ together with the values of the prior predictive density (evidence) at these 20 observations are shown in \cref{fig:prior-predictive-heat-1d-main}.
The corresponding evidence values are seen to vary over several orders of magnitude.

\begin{figure}
    \centering
    \includegraphics[width=0.5\linewidth]{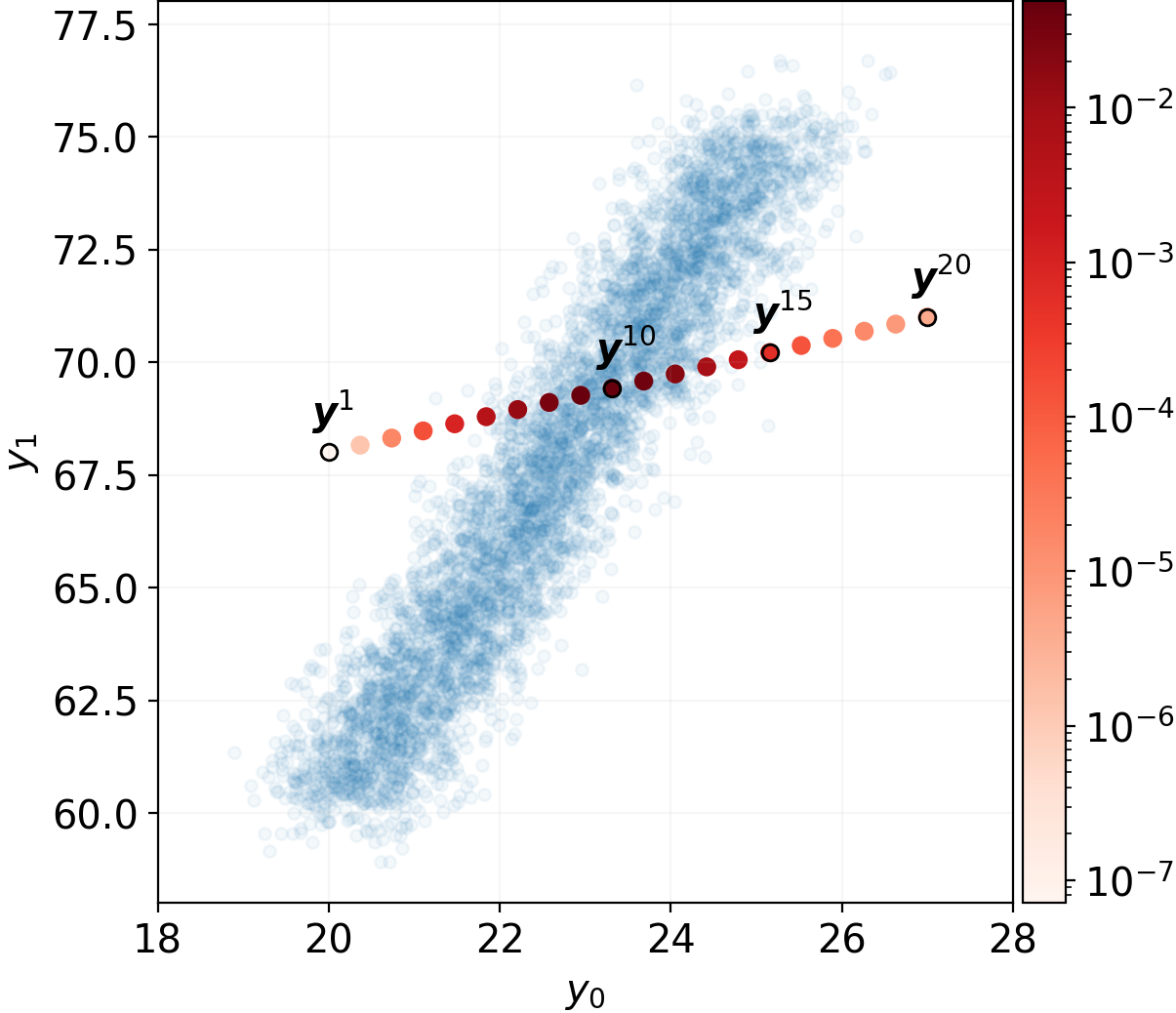}
    \caption{The blue markers show \num{5000}~samples from the prior predictive distribution of the observations \eqref{eq:toy_obs} of the solution of \eqref{eq:toy_pde}. The color intensity of the red markers at $\vy^{1}$ to $\vy^{20}$ indicates the value $\tdens_Y(\vy^{i})$ of the prior predictive density at these locations.}
    \label{fig:prior-predictive-heat-1d-main}
\end{figure}

As a reference, we run RWMH with proposal covariance $\Sigma = I_2$ for each observation, for $10^6$ steps after a burn-in of $10^5$ steps; since the problem is only two-dimensional, we also compute the posterior by numerical quadrature (see below).
The flow matching velocity field is a multilayer perceptron (MLP) with six hidden layers of width $\nhidden = 50$ and swish activations \cite{ramachandran2017searching}.
It is trained on $10^6$ samples from the joint distribution $\tdist$, which together require fewer forward solves than a single one of the 20~RWMH runs, for \num{4000} epochs with batch size \num{10000} and a cosine learning-rate schedule decaying from $10^{-3}$ to $10^{-5}$.

\Cref{fig:2dexample_fm_vs_rw} shows samples generated by RWMH as well as samples generated by the flow matching procedure for the observations $\vy^{10}$, $\vy^{15}$ and $\vy^{20}$.
Visually, for observations that are likely under the prior, such as $\vy^{10}$, both RWMH and FM generate samples that closely follow the true posterior.
Observations that are less likely ($\vy^{15}$) and highly unlikely ({$\vy^{20}$}) under the prior show minor deviations between the FM samples and the true posterior.
As is shown in \cref{fig:2dexample_imh_vs_rw}, using a flow matching posterior as the proposal within IMH can eliminate this small deviation completely.
The same holds true for all other observations $\vy^{i}$, $i=1,\ldots,20$ with IMH acceptance rate of at least $84\%$, as is shown in \cref{fig:toy_acceptance}.
Furthermore, this figure shows TV distances and Kullback--Leibler divergences for the observations $\vy^i$ together with their respective bounds derived in \eqref{eq:tv_from_acceptance} and \eqref{eq:kl_bounds}.
The values for the true TV distances and KL divergences  were approximated by numerical integration of
\begin{equation*}
  D_{\mathrm{KL}}\bigl(\tdist^{\vy^i} \,\|\, \fmdist^{\vy^i}\bigr)
  = \int_\Omega \tdens^{\vy^i}(x) \log\frac{\tdens^{\vy^i}(x)}{\fmdens^{\vy^i}(x)}\,\mathrm{d}x,
  \qquad
  d_{\mathrm{TV}}\bigl(\tdist^{\vy^i}, \fmdist^{\vy^i}\bigr)
  = \frac12 \int_\Omega \bigl\lvert \tdens^{\vy^i}(x) - \fmdens^{\vy^i}(x) \bigr\rvert\,\mathrm{d}x
\end{equation*}
with the midpoint rule on a $200\times200$ grid over $\Omega = [-10,10]\times[90,110]$. Both densities are normalized on the grid.

\begin{figure}
    \centering
    \begin{subfigure}{0.33\textwidth}
    \centering
        \includegraphics[width=0.98\linewidth]{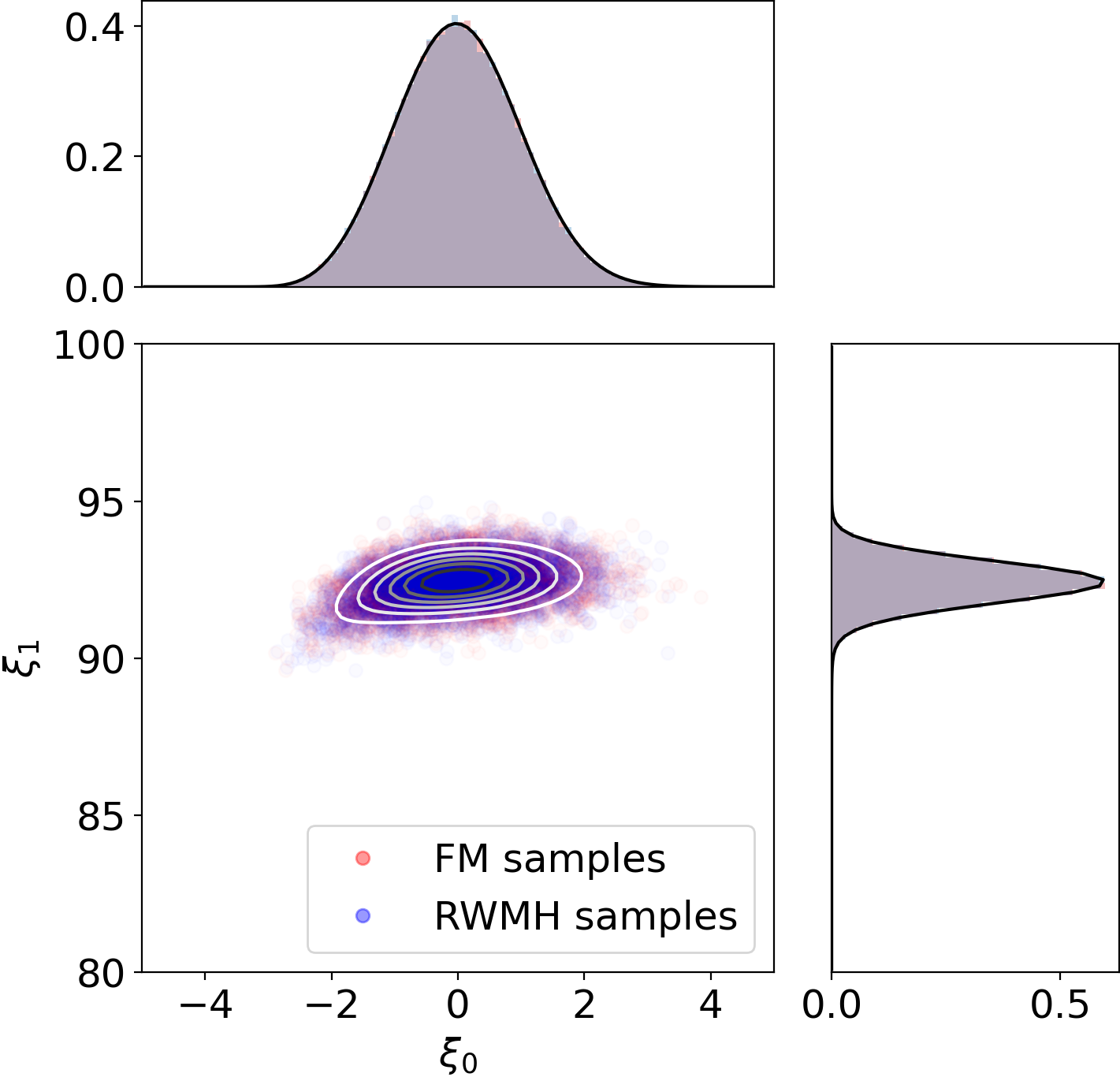}
    \end{subfigure}\hfill%
        \begin{subfigure}{0.33\textwidth}
\includegraphics[width=0.98\linewidth]{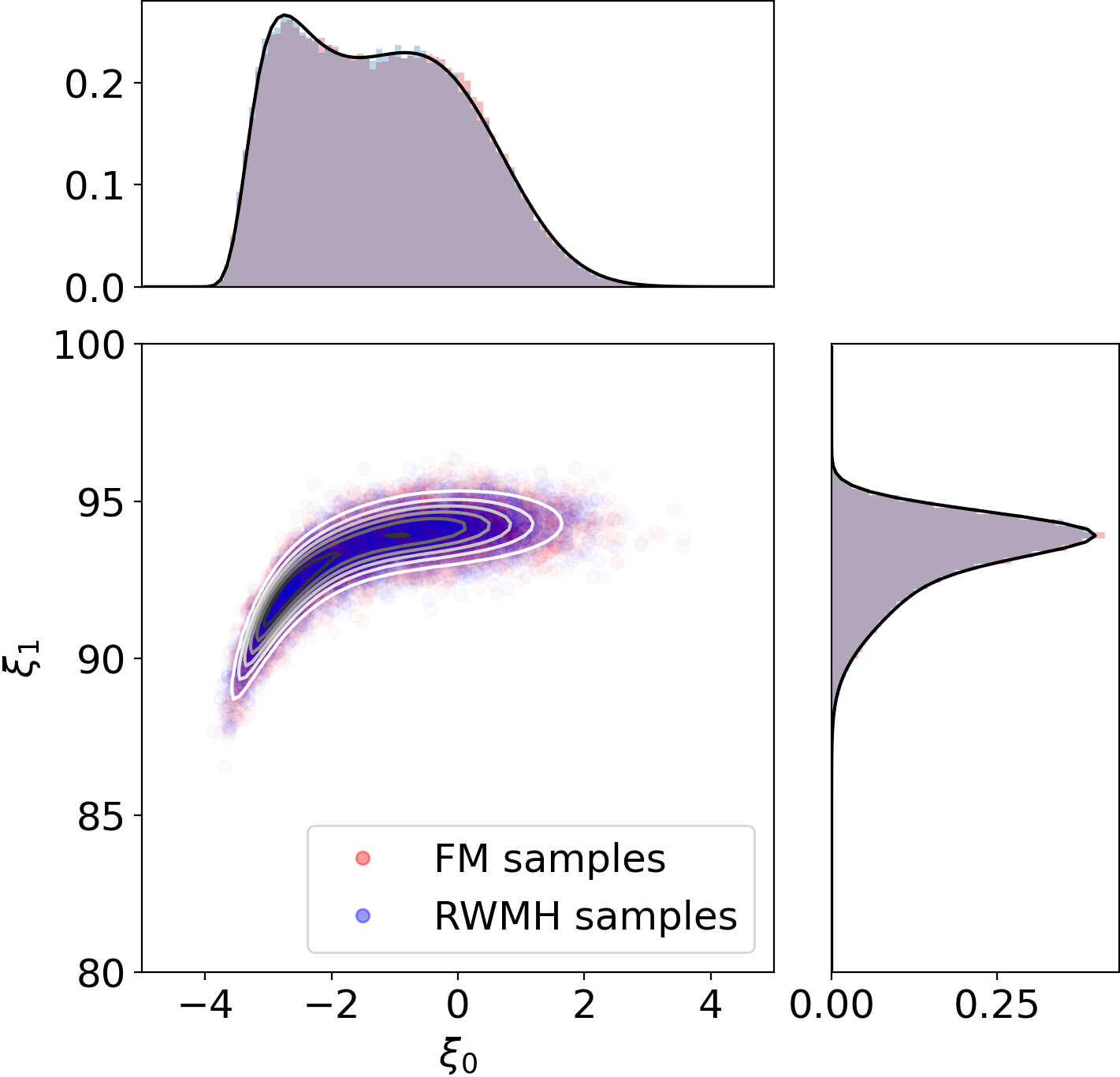}
    \end{subfigure}\hfill%
        \begin{subfigure}{0.33\textwidth}
\includegraphics[width=0.98\linewidth]{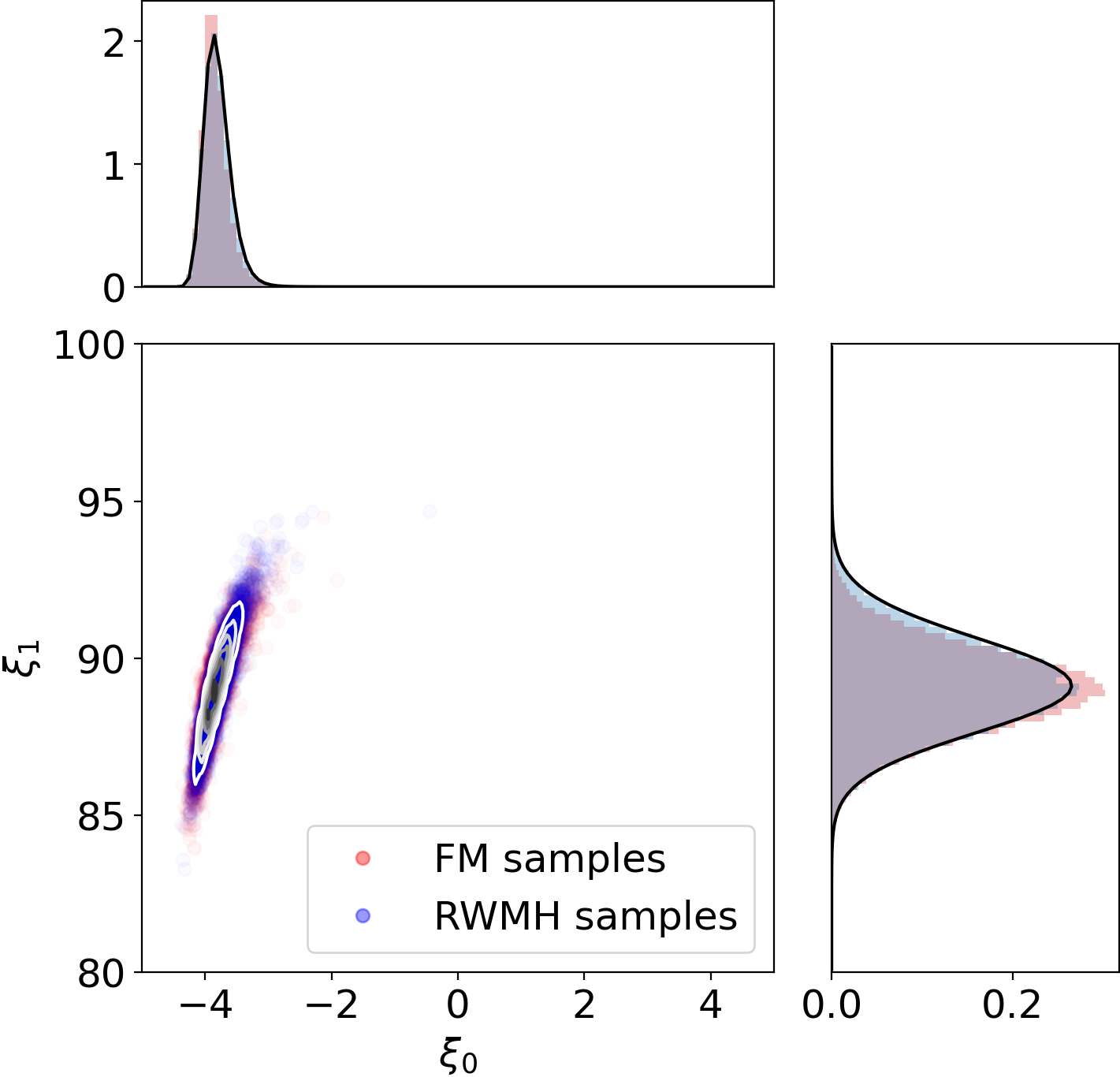}
\end{subfigure}
    \caption{Samples from RWMH and flow matching with their marginal distributions.
    Shown are the observations $\vy^{10}$, $\vy^{15}$ and  $\vy^{20}$.}
    \label{fig:2dexample_fm_vs_rw}
\end{figure}

\begin{figure}
    \centering
        \begin{subfigure}{0.33\textwidth}
    \centering
        \includegraphics[width=0.98\linewidth]{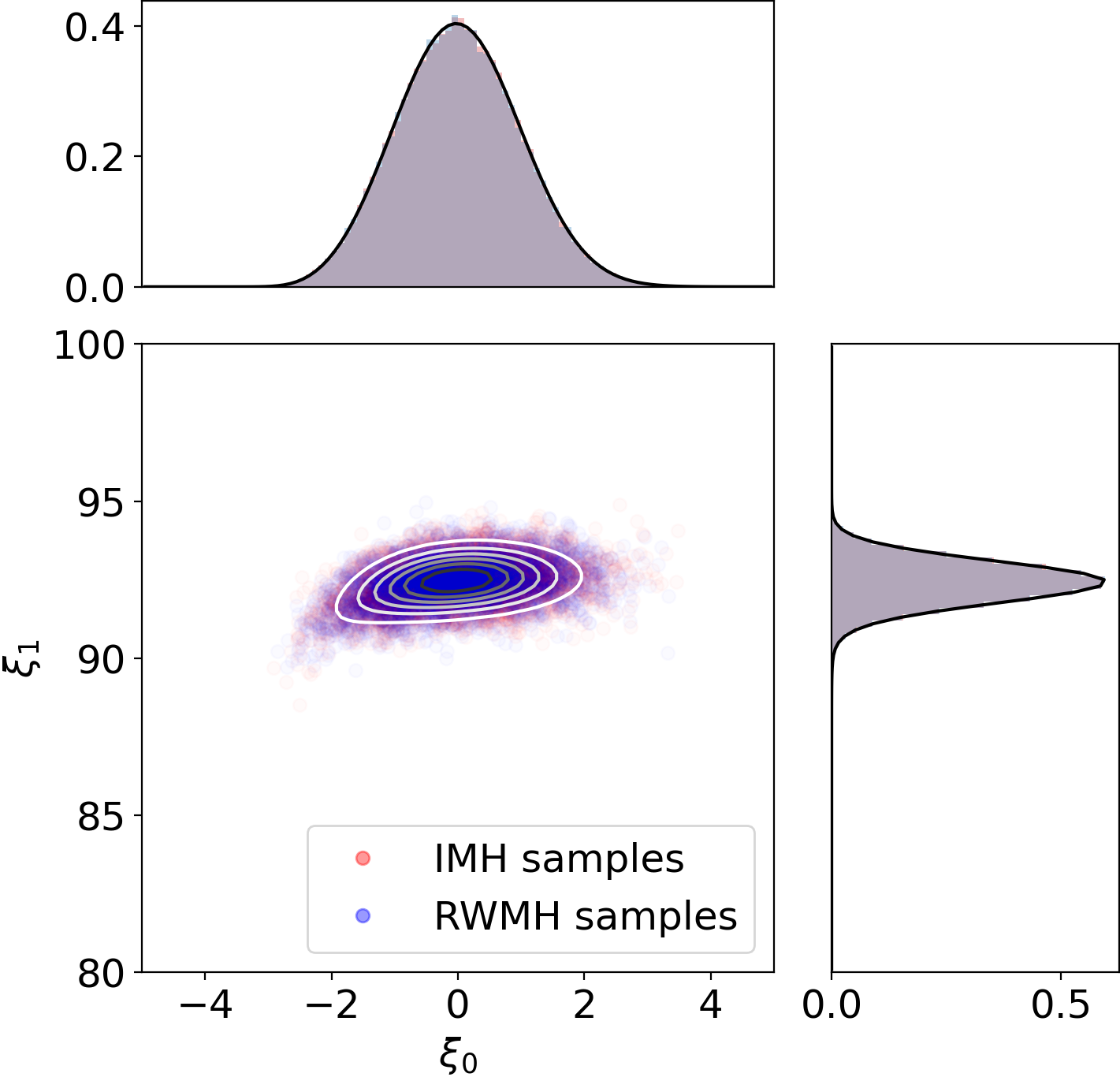}
    \end{subfigure}\hfill%
        \begin{subfigure}{0.33\textwidth}
\includegraphics[width=0.98\linewidth]{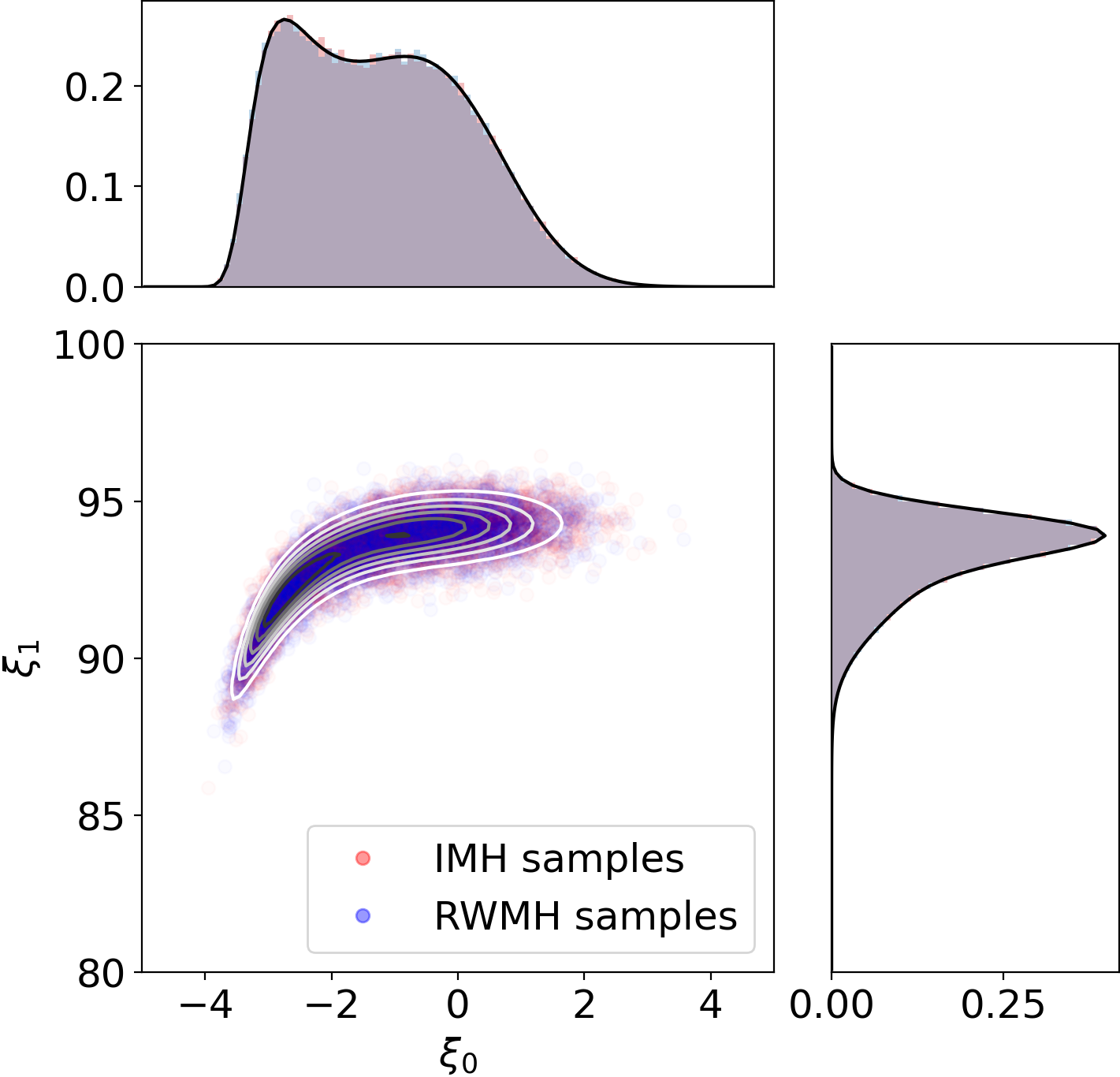}
    \end{subfigure}\hfill%
        \begin{subfigure}{0.33\textwidth}
\includegraphics[width=0.98\linewidth]{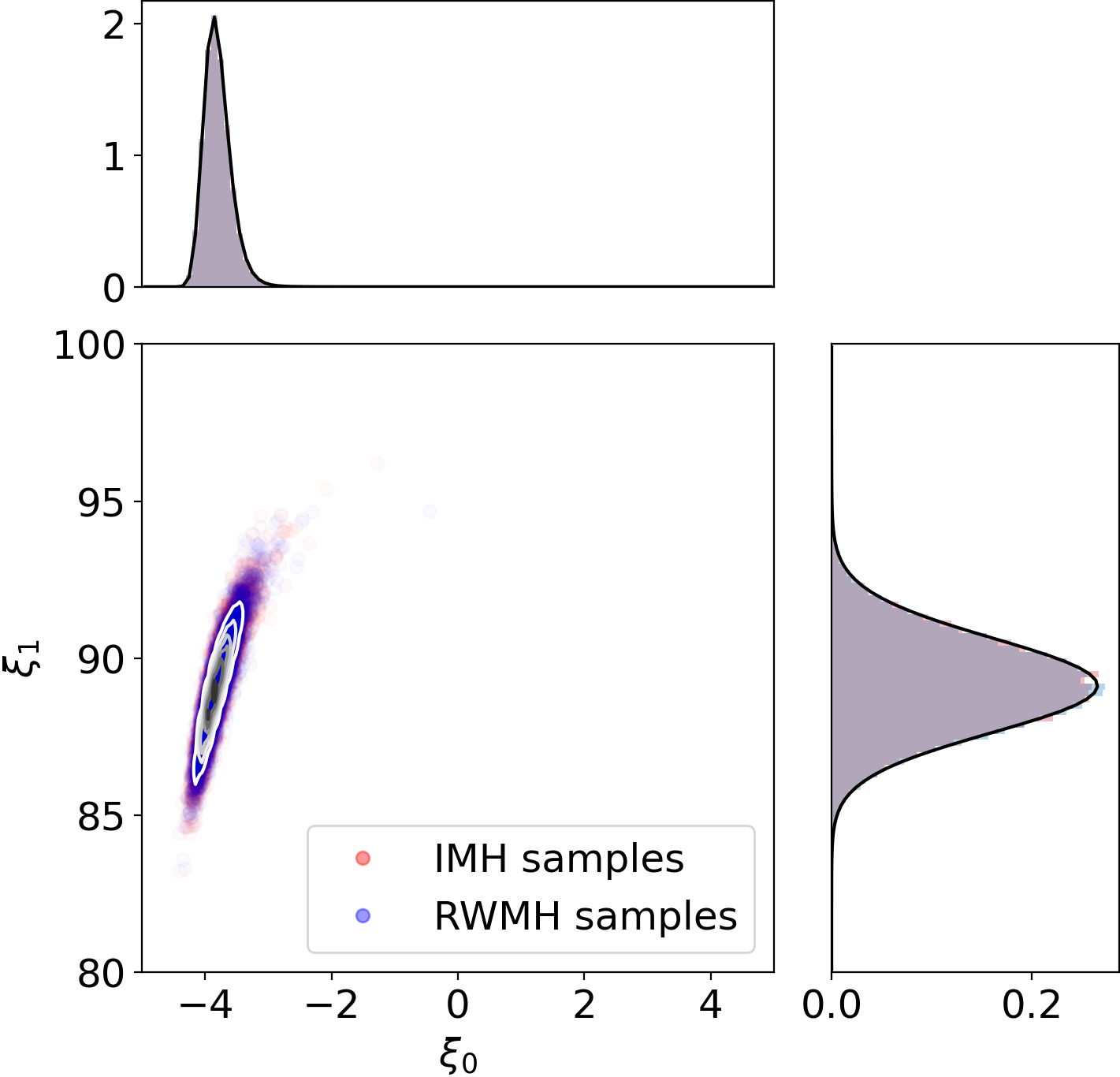}
    \end{subfigure}
    \caption{Samples from RWMH and IMH with flow matching proposal with their marginal distributions.
    Shown are the observations $\vy^{10}$, $\vy^{15}$ and  $\vy^{20}$.
    The corresponding acceptance rates are $\bar \alpha^{10} = 99.48\%$, $\bar \alpha^{15}=96.41\%$ and $\bar \alpha^{20}=84.22\%$.}
    \label{fig:2dexample_imh_vs_rw}
\end{figure}

\begin{figure}
\centering
\begin{subfigure}{0.33\linewidth}
    \includegraphics[width=0.98\linewidth]{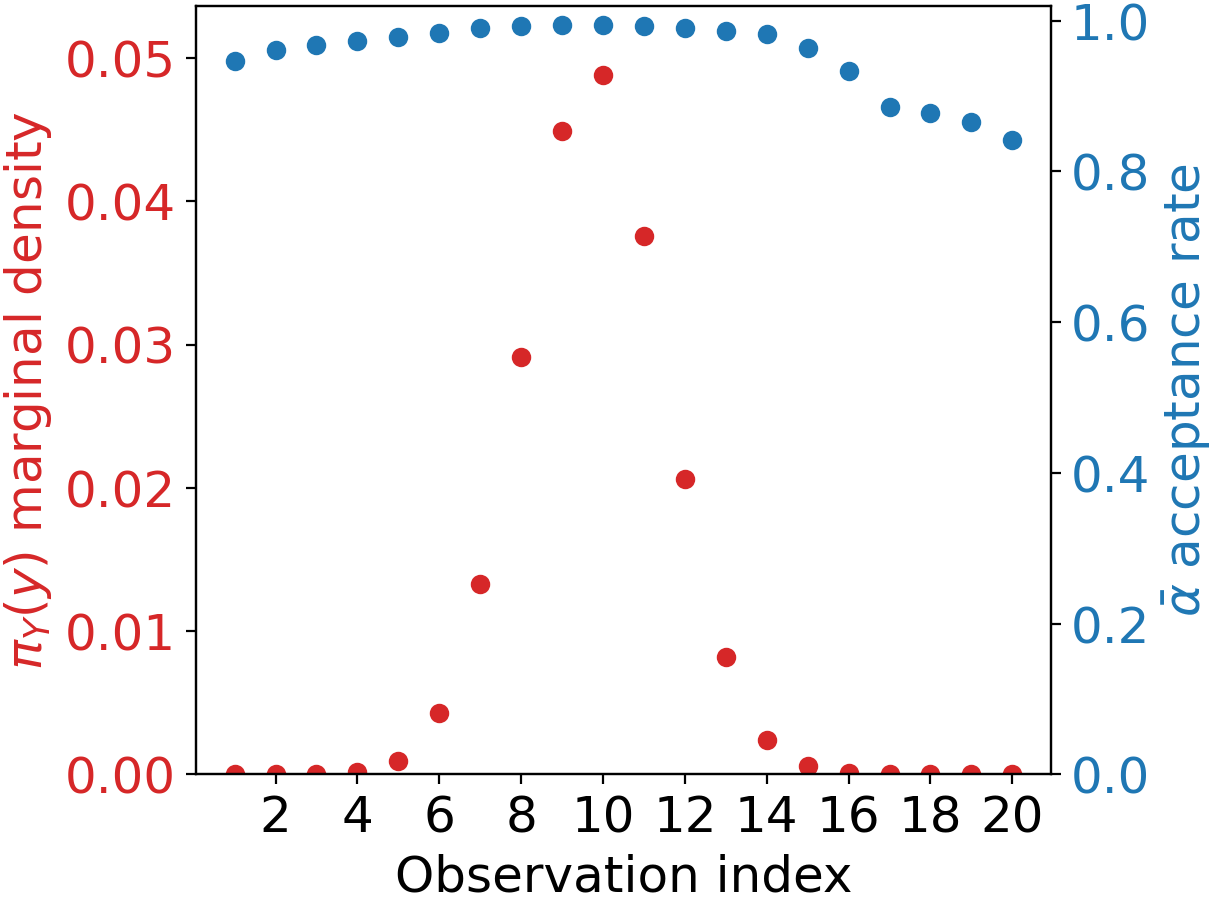}
\end{subfigure}\hfill%
\begin{subfigure}{0.33\linewidth}
    \includegraphics[width=0.98\linewidth]{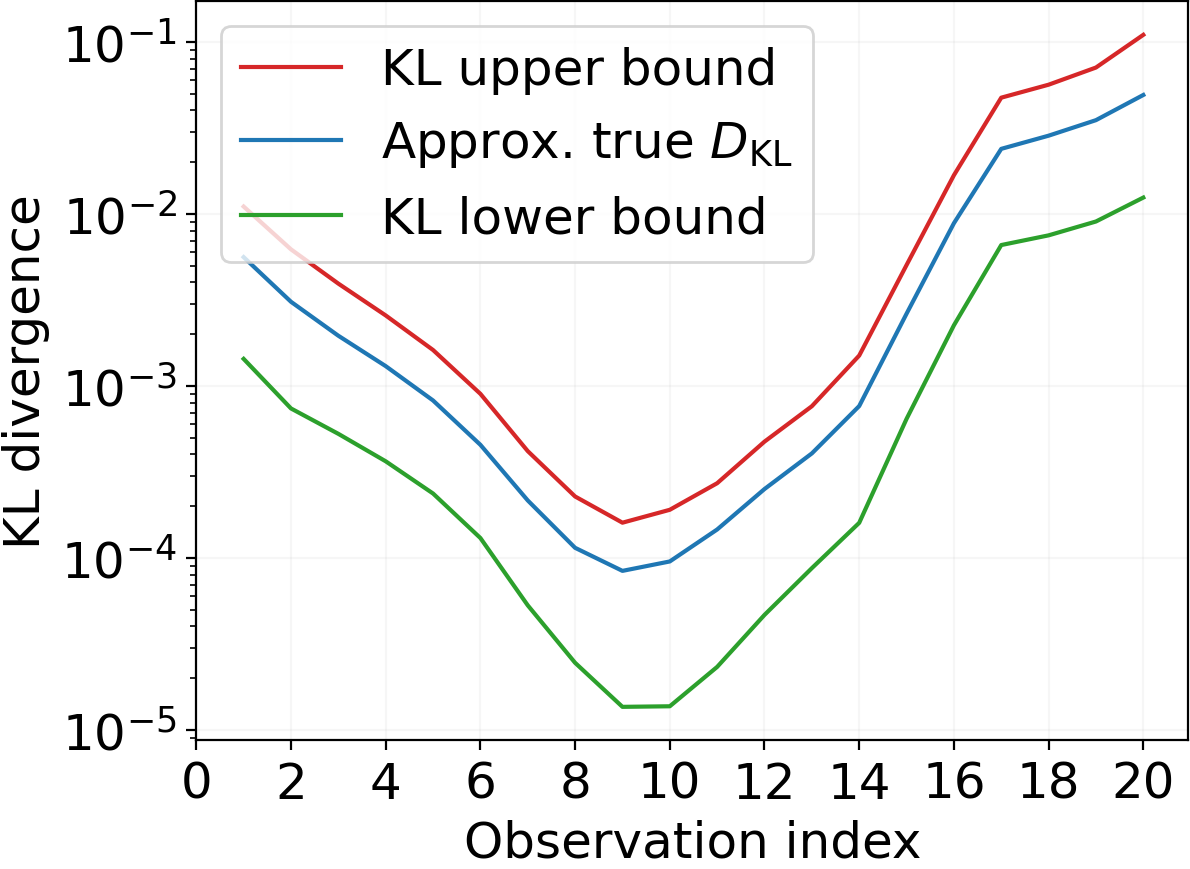}
\end{subfigure}\hfill%
\begin{subfigure}{0.33\linewidth}
    \includegraphics[width=0.98\linewidth]{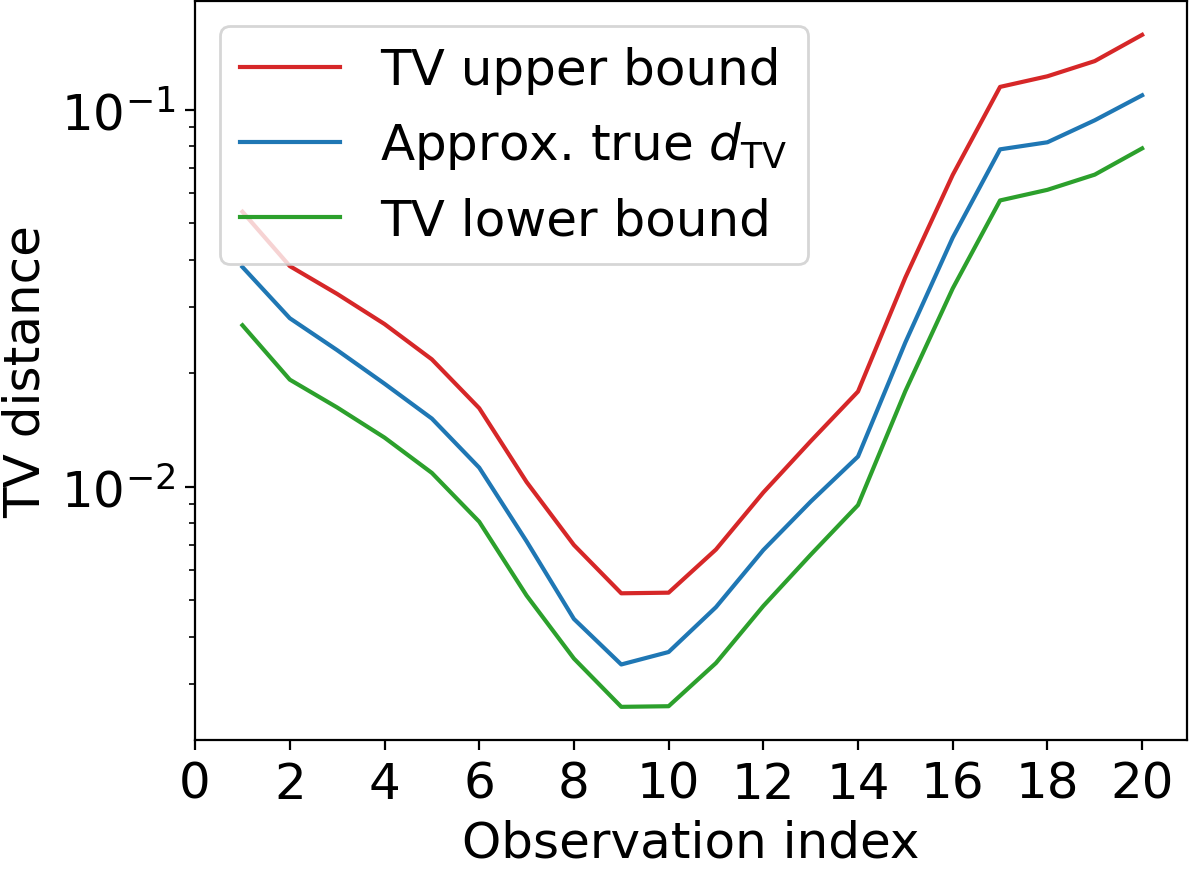}
\end{subfigure}
\caption{%
    Left: prior predictive density $\tdens_Y$ at the 20 observations and acceptance rate of FM-IMH with 500 000 proposals.
    Middle and right: KL divergence and TV distance between posterior and flow matching approximation, computed by quadrature, with the estimated bounds \eqref{eq:kl_bounds} and \eqref{eq:tv_from_acceptance}.}
\label{fig:toy_acceptance}
\end{figure}
\subsection{Diffusion equation with distributed parameter}\label{sec:infinite_heat_example}

As a second example, we consider a diffusion equation in one dimension with an unknown distributed diffusion coefficient $\sigma(z) = \exp(x(z))$.
The difference from \cref{sec:toy} is that the diffusion parameter is no longer assumed to be constant but a function, and that both Dirichlet boundary values are assumed to be known.
Thus, for the spatial variable $z \in \DD =(0,1)$ we consider the boundary value problem
\begin{subequations} \label{bvp}
\begin{equation}\label{eq:bvp}
    - (\sigma(z) \, u'(z) )' = 0, \quad z \in \DD, \qquad  u(0) = 0, \; u(1) = 2,
\end{equation}
where $u: [0,1] \to \R$ denotes the solution.
The prior distribution for the log-coefficient $x:[0, 1] \to \R$ is modeled as a Gaussian random field of Whittle--Mat\'ern type with smoothness $1/2$, i.e.\ with covariance operator
\begin{equation*}
    \mathcal C_0 = 2 \, ( 25 - \Delta )^{-1},
\end{equation*}
where $\Delta$ is the Laplacian on $(0,1)$ with homogeneous Dirichlet boundary conditions.
Away from the boundary its covariance is close to the exponential kernel $\frac15 \exp(-5|z - z'|)$, i.e.\ marginal variance $0.2$ and correlation length $0.2$.
We represent $x$ by its Karhunen--Loève expansion truncated after $\nkl$ terms
\begin{equation*}
    x(z) = \sum_{i=1}^{\nkl} \sqrt{\lambda_i} \, \varphi_i(z) \, \xi_i,
    \qquad \xi_i \sim \mathcal{N}(0,1) \text{ i.i.d.},
\end{equation*}
where $\lambda_i = 2 / (25 + \pi^2 i^2)$ and $\varphi_i(z) = \sqrt2 \sin(i\pi z)$ are the $\nkl$ dominant eigenpairs of $\mathcal C_0$, so that the unknown is effectively parametrized by the  coefficient vector $\vxi = (\xi_1, \dots, \xi_{\nkl}) \in \R^{\nkl}$ endowed with a standard Gaussian prior.
Both $x$ and $\sigma$ are thus random fields depending on $z$ and $\vxi$; we write $x(z) = x(z; \vxi)$ and $\sigma(z) = \sigma(z; \vxi)$ and leave the dependence on $\vxi$ implicit.
For fixed $x$ the flux $\sigma u'$ is constant, so the solution is again available in closed form as
\[
  u(z) = 2 \, \frac{\int_0^z \sigma(s)^{-1} \, \mathrm{d}s}{\int_0^1 \sigma(s)^{-1} \, \mathrm{d}s} ,
\]
and is evaluated numerically by a cumulative Simpson rule on a uniform grid of \num{512} points.
As observations we consider point evaluations of the solution
\begin{equation}\label{eq:bvp_obs}
\vy = (y_i)_{i=1}^{\nobs} + \vvarepsilon \quad \text{with }  \vvarepsilon \sim \NN(0, \gamma^2 \, I_{\nobs}),
\end{equation}
\end{subequations}
where $y_i = u(i \,\Delta z )$ with $\Delta z = 1/(\nobs + 1)$.

In the following, we compare FM against the MCMC algorithms  pCN and DRAM.
For pCN, we tune the parameter $\beta$ to obtain an acceptance rate close to the conventional target \num{23.4}\% \cite{roberts2001optimal}.
For DRAM, the proposal covariance is updated every \num{1000} iterations, and the second-stage proposal is scaled by a factor of \num{0.2}.
After a burn-in of $10^6$ steps, each pCN and DRAM chain runs for $2 \times 10^7$ steps and is thinned by a factor of~500, which leaves \num{40000} samples.
We use this setup in all studies to keep them comparable.
Problem-specific tuning could improve the performance of both samplers, but would not remove the difficulties with
concentrated posteriors.

Unless stated otherwise, the FM velocity field is an MLP with three hidden layers of $\nhidden = 800$ neurons each and swish activations, which takes $(t, x_t, y)$ as input and returns the velocity directly; two modifications of this construction are introduced in \cref{subsubsec:visual}.
It is trained on $2 \times 10^6$ samples from the joint distribution $\tdist$, with a further $2 \times 10^5$ samples for validation, for \num{500} epochs with batch size \num{5000} and a cosine learning-rate schedule decaying from $10^{-3}$ to $10^{-5}$.
Samples are generated by integrating the learned field with a fourth-order Runge--Kutta scheme and 25 steps on the mesh $t_k = 1 - (1 - k/25)^3$, $k = 0, \dots, 25$, graded toward $t = 1$, for the reason given in \cref{subsubsec:visual}; this applies to all results in this section.

\subsubsection{Comparison of FM and MCMC} \label{subsubsec:visual}

\Cref{fig:dram_vs_fm_0,fig:dram_vs_fm_1} display the posterior distributions obtained with flow matching and DRAM.
The four panels correspond to different ground truths $x^\star_j$, $j=1,\dots,4$, i.e., log-coefficient functions $x$ which are used to generate the data:  a sine function, which is highly unlikely under the given prior; a piecewise constant function; and two realizations of the prior truncated at $100$ and $50$ modes, respectively.
Two observation settings are used for each of the four: $\nobs = 32$, $\gamma^2 = 10^{-4}$ (\cref{fig:dram_vs_fm_0}) and $\nobs = 256$, $\gamma^2 = 10^{-5}$ (\cref{fig:dram_vs_fm_1}), both with $\nkl = 128$.
\begin{figure}[!h]
    \centering
    \includegraphics[width=0.9\linewidth]{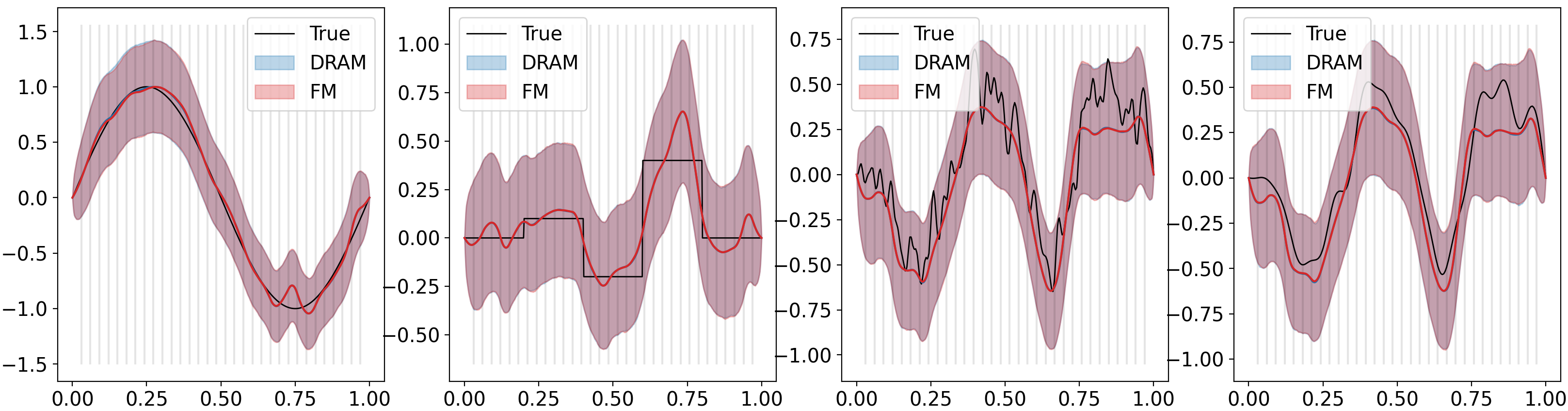}
    \caption{Posterior distributions for problem \eqref{bvp} obtained by DRAM (blue) and FM (red) for four different ground truth coefficients (black solid lines).
    The remaining solid lines indicate the posterior means and the shaded areas the pointwise 95\% credible bands.
    The solution is observed at the 32 locations indicated by the
    vertical gray lines with an observation noise variance
    of $\gamma^2 = 10^{-4}$.}
    \label{fig:dram_vs_fm_0}
\end{figure}
\begin{figure}[!h]
    \centering
    \includegraphics[width=0.9\linewidth]{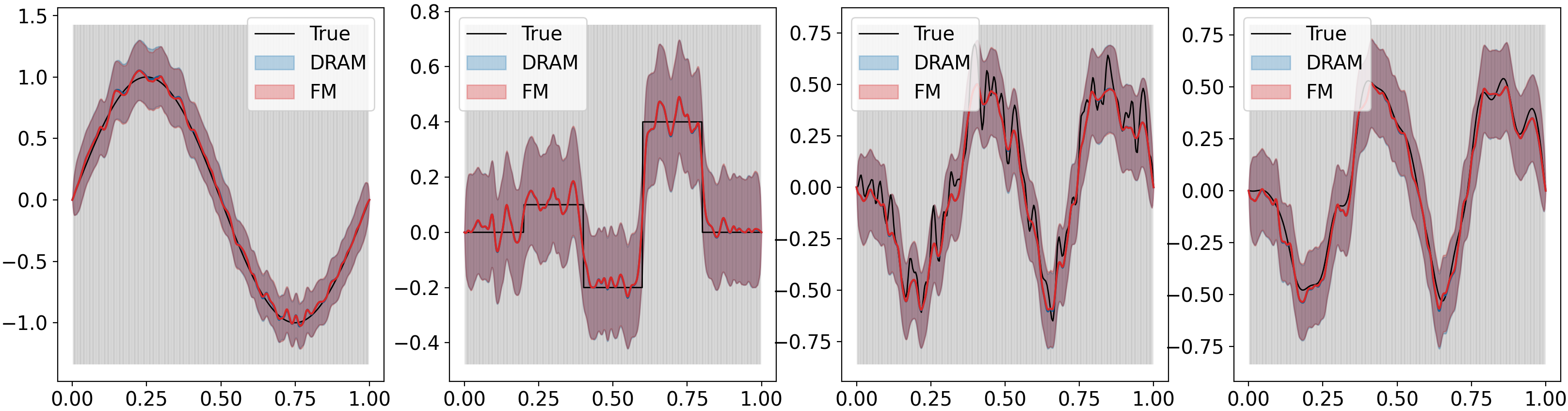}
    \caption{Same as \cref{fig:dram_vs_fm_0} except that the number of observations is increased to 256 and the observation noise variance reduced to $\gamma^2 = 10^{-5}$, resulting in a more concentrated posterior distribution.}
    \label{fig:dram_vs_fm_1}
\end{figure}
In both figures the FM posterior mean and its $95\%$ credible band are visually indistinguishable from those obtained with DRAM, indicating that FM produces an accurate approximation of the true posterior in terms of mean and uncertainty around the mean.
\begin{table}[htbp]
  \centering
  \begin{tabular}{lrrrr}
    \toprule
     & $x^\star_1$ & $x^\star_2$ & $x^\star_3$ & $x^\star_4$ \\
    \midrule
    \multicolumn{5}{l}{\emph{Fig.~\ref{fig:dram_vs_fm_0}:} $\nobs = 32$, $\gamma^2 = 10^{-4}$} \\
    \quad MLP, $\nhidden = 800$
      & \num{51.16} & \num{76.03} & \num{70.13} & \num{69.97}  \\
    \quad MLP + weights, $\nhidden = 800$
      & \textbf{57.01} & \num{79.37} & \textbf{74.69} & \textbf{74.80} \\
    \quad \texttt{gauss}, $\nhidden = 800$
      & \num{31.97} & \num{64.02} & \num{51.18} & \num{51.50} \\
    \quad \texttt{gauss} + weights, $\nhidden = 800$
      & \num{43.20} & \textbf{81.81} & \num{71.40} & \num{71.48} \\
    \quad \texttt{gauss} + weights, $\nhidden = 2400$
      & \num{23.18} & \num{65.55} & \num{55.44} & \num{55.59}  \\
    \addlinespace
    \midrule
    \multicolumn{5}{l}{\emph{Fig.~\ref{fig:dram_vs_fm_1}:} $\nobs = 256$, $\gamma^2 = 10^{-5}$} \\
    \quad MLP, $\nhidden = 800$
      & \num{0.66} & \num{11.21} & \num{5.97} & \num{5.47} \\
    \quad MLP + weights, $\nhidden = 800$
      & \num{1.55} & \num{36.20} & \num{25.47} & \num{27.17} \\
    \quad \texttt{gauss}, $\nhidden = 800$
      & \num{1.07} & \num{22.97} & \num{14.67} & \num{14.93} \\
    \quad \texttt{gauss} + weights, $\nhidden = 800$
      & \num{3.70} & \num{47.49} & \num{33.02} & \num{36.49}  \\
    \quad \texttt{gauss} + weights, $\nhidden = 2400$
      & \textbf{4.65} & \textbf{51.63} & \textbf{39.85} & \textbf{39.89}  \\
    \bottomrule
  \end{tabular}
  \caption{Acceptance rate (\%) of the FM-IMH sampler for the four ground truths
    $x^\star_1,\dots,x^\star_4$ at the two configurations shown in
    Figures~\ref{fig:dram_vs_fm_0} and~\ref{fig:dram_vs_fm_1}; the best rate
    for each ground truth is set in bold.}
  \label{tab:fm_imh_acceptance}
\end{table}

A different picture emerges when the FM posterior is used as the proposal of an independent Metropolis--Hastings sampler (FM-IMH, \cref{subsubsec:imh}).
For the MLP with $\nhidden = 800$ neurons per layer, the acceptance rates in \Cref{tab:fm_imh_acceptance} drop sharply from the setting of \cref{fig:dram_vs_fm_0} to that of \cref{fig:dram_vs_fm_1}.
Hence, as the posterior concentrates, the FM posterior deviates from the true posterior much more than the two figures suggest.

The discrepancy becomes visible in the latent coefficients $\xi_i$ rather than in the field; \cref{fig:heat_marginals} in \cref{app:heat-marginals} shows the marginals of $\xi_1, \dots, \xi_4$ in the setting of \cref{fig:dram_vs_fm_1}.
Three findings guide the modifications below.
First, the data constrain the coefficients very unequally: for $x^\star_2, x^\star_3, x^\star_4$ the posterior standard deviation of $\xi_2$ is about $5.5 \times 10^{-3}$, i.e., roughly $180$ times smaller than its prior standard deviation, whereas, e.g., $\xi_{64}$ and $\xi_{128}$ retain posterior standard deviations between $0.93$ and $1$.
Second, the MLP misrepresents precisely the most strongly constrained coefficients: its marginal of $\xi_2$ is about twice as wide as that of DRAM, and for $x^\star_3$ and $x^\star_4$ its mean of $\xi_4$ is off by $0.7$ to $0.8$ posterior standard deviations.
Measured on the scale of the prior, these errors are of order $10^{-2}$ or smaller, which is why they cannot be seen in \cref{fig:dram_vs_fm_1}.
Third, the FM loss sums the squared velocity errors over all coordinates with equal weight, so that an error which is small on the scale of the prior but large relative to the posterior width is hardly penalized during training.

We therefore consider two modifications of the MLP, which can be applied separately or together.
With \texttt{weights}, the per-coordinate terms of the FM loss are weighted by $w_i \propto 1/s_i$, where $s_i^2$ is the approximated posterior variance of $\xi_i$ resulting from linearizing the parameter-to-observation map at the prior mean --- i.e., the approximation to the posterior covariance matrix is then $(J^\top J/\gamma^2 + I)^{-1}$ with $J$ denoting the Jacobian of the parameter-to-observation map at $\vxi = 0$.
The weights thus require a single Jacobian, but neither a reference chain nor a trained model.
With \texttt{gauss}, the $i$-th component of the velocity field takes the form
\begin{equation}\label{eq:gauss_head}
    v_{\theta,i}(t, x; y) \;=\;
    \underbrace{\omega_i(t)\, x_i + \bigl(1 - t\,\omega_i(t)\bigr) \hat\mu_i(y)}
      _{\text{exact for } \NN\bigl(\hat\mu(y), \operatorname{diag} \psi_i^2\bigr)}
    \;+\; \mathrm{MLP}_i(t, x, y),
    \qquad
    \omega_i(t) \;=\; \frac{t\,\psi_i^2 - (1-t)}{t^2 \psi_i^2 + (1-t)^2},
\end{equation}
where the first term is the exact velocity field of a Gaussian posterior with mean $\hat\mu(y) = W y + b$, linear in $y$, and per-coordinate variances $\psi_i^2$.
The parameters $W$, $b$ and $\psi_i$ are trained together with the MLP, starting from $\psi_i = 1$, so that the MLP (of the default architecture) only has to learn the deviation from a diagonal Gaussian posterior, and narrow marginals no longer have to be generated by the MLP alone.
For small $\psi_i$, $\abs{\omega_i}$ becomes large shortly before $t = 1$, and equally spaced steps would leave the stability region of the Runge--Kutta scheme there; this is the reason for the graded mesh introduced above.
All other settings are those of the MLP.

\Cref{tab:fm_imh_acceptance} reports the acceptance rates of all four combinations with $\nhidden = 800$.
In the concentrated setting of \cref{fig:dram_vs_fm_1}, each modification raises the acceptance rate for every ground truth, the weighting more than the Gauss head, e.g., for $x^\star_3$ from \num{5.97}\% to \num{25.47}\% and \num{14.67}\%, respectively, and the combination of both the most, to \num{33.02}\%.
In the wider setting of \cref{fig:dram_vs_fm_0}, the weighted MLP performs best for three of the four ground truths, whereas the Gauss head without weights lowers the acceptance rate.
The combination of both modifications was additionally trained with $\nhidden = 2400$, which further raises the acceptance rate on the concentrated posterior but lowers it on the wider one.
In the marginals of \cref{fig:heat_marginals}, the combination with $\nhidden = 800$ matches the widths of DRAM for $\xi_2$ and $\xi_4$ to within $3\%$ for $x^\star_2, x^\star_3, x^\star_4$.
The plotted posterior, on the other hand, does not reveal the improvement: in the setting of \cref{fig:dram_vs_fm_1}, the MLP and the combination of both modifications with $\nhidden = 800$ and $\nhidden = 2400$ all reproduce the posterior mean and the bounds of the $95\%$ band of DRAM to between $0.8\%$ and $1.8\%$ in relative $L^2$, the MLP in fact slightly more closely than the other two.
Across all models, the acceptance rate is lowest for $x^\star_1$; we return to this in \cref{subsubsec:prior_mass}.

In \Cref{tab:mcmc_ess}, we report the ESS of \num{1000000} consecutive samples of the chains generated by pCN, DRAM and FM-IMH.
For FM-IMH the proposals are independent, so its ESS measures only the correlation introduced by rejections, whereas for pCN and DRAM it reflects how quickly the chain explores the posterior.
For the purpose of comparison, the ESS of pCN and DRAM is computed on the last \num{1000000} steps of the chains without thinning.
For FM-IMH we draw $10^6$ proposals, generated as for \Cref{tab:fm_imh_acceptance}, and run IMH on them.
The ESS is estimated for each latent coefficient $\xi_i$ \cite{geyer1992practical}, and we report the minimum over all $\nkl = 128$ coefficients.
DRAM yields a noticeably larger ESS than pCN in both observation settings; moreover, the ESS of pCN drops sharply for the more concentrated posterior, while that of DRAM barely changes.
This is expected: DRAM adapts its proposal covariance to the empirical covariance of the chain, whereas the pCN proposal is fixed to a multiple of the prior covariance scaled to achieve a prescribed acceptance rate.
As a consequence, pCN takes steps which are shorter than necessary in directions not informed by the data, thus exploring the posterior less effectively than DRAM.
FM-IMH attains a larger ESS than DRAM for every model and ground truth except $x^\star_1$ with the MLP on the concentrated posterior, by a factor between about \num{1.3} and more than \num{300}.

Measured per forward solve, FM-IMH is more efficient than pCN and DRAM: including the $2\times10^6$ forward solves of its training data, it used $3\times10^6$ solves in total, against $2.1\times10^7$ for each DRAM chain.
Wall-clock times confirm this.
For the setting of \cref{fig:dram_vs_fm_1} and $x^\star_3$, a DRAM chain of \num{2.1e7} states takes about \num{42} minutes on a CPU (AMD Ryzen~7 9700X), and its last \num{1e6} states attain an ESS of \num{1412}, i.e., about \num{85} seconds per \num{1000} effective samples.
For FM with \texttt{gauss} and \texttt{weights}, generating the training data takes about \num{2} minutes and training about \num{14} minutes on a GPU (NVIDIA GeForce RTX~4070~Ti~SUPER).
FM-IMH with \num{1e6} proposals then takes \num{22} minutes, almost entirely spent on evaluating the proposal densities \eqref{eq:backward_likelihood}, and attains an ESS of \num{87703}, i.e., about \num{15} seconds per \num{1000} effective samples, or \num{26} seconds including data generation and training.
Even for this single observation, FM-IMH thus takes \num{38} minutes in total, less than one DRAM chain, and delivers about three times as many effective samples as the \num{2e7} post-burn-in states of that chain, assuming that their ESS grows linearly with the chain length.
Without the Metropolis--Hastings correction, no densities are needed, and \num{1e6} FM samples are drawn in \num{15} seconds.
Since the two methods run on different hardware, these timings indicate orders of magnitude rather than a precise comparison.

\begin{table}[htbp]
  \centering
  \begin{tabular}{lrrrr}
    \toprule
     & $x^\star_1$ & $x^\star_2$ & $x^\star_3$ & $x^\star_4$ \\
    \midrule
    \multicolumn{5}{l}{\emph{Fig.~\ref{fig:dram_vs_fm_0}:} $\nobs = 32$, $\gamma^2 = 10^{-4}$} \\
    \quad pCN & \num{104} & \num{76} & \num{83} & \num{75} \\
    \quad DRAM & \num{1483} & \num{1471} & \num{1589} & \num{1078} \\
    \quad FM-IMH\\
    \quad \quad MLP, $\nhidden = 800$
      & \num{138289} & \num{470585} & \num{419401} & \num{416965} \\
    \quad \quad \texttt{gauss} + weights, $\nhidden = 800$
      & \num{26612} & \num{585327} & \num{403723} & \num{418203} \\
    \quad \quad \texttt{gauss} + weights, $\nhidden = 2400$
      & \num{18243} & \num{351710} & \num{257563} & \num{258677} \\
    \addlinespace
    \midrule
    \multicolumn{5}{l}{\emph{Fig.~\ref{fig:dram_vs_fm_1}:} $\nobs = 256$, $\gamma^2 = 10^{-5}$} \\
    \quad pCN & \num{3.31} & \num{3.07} & \num{3.68} & \num{3.16} \\
    \quad DRAM & \num{1367} & \num{1445} & \num{1412} & \num{1442} \\
    \quad FM-IMH\\
    \quad \quad MLP, $\nhidden = 800$
      & \num{97} & \num{24846} & \num{8995} & \num{4281} \\
    \quad \quad \texttt{gauss} + weights, $\nhidden = 800$
      & \num{1749} & \num{155014} & \num{87703} & \num{91129} \\
    \quad \quad \texttt{gauss} + weights, $\nhidden = 2400$
      & \num{1894} & \num{178779} & \num{136480} & \num{121768} \\
    \bottomrule
  \end{tabular}
  \caption{Effective sample size of \num{1000000} consecutive samples for the
    four ground truths $x^\star_1,\dots,x^\star_4$ at the two configurations
    shown in Figures~\ref{fig:dram_vs_fm_0} and~\ref{fig:dram_vs_fm_1},
    minimum over the \num{128} latent coefficients.}
  \label{tab:mcmc_ess}
\end{table}

\subsubsection{Observations outside the training support} \label{subsubsec:prior_mass}
Across all models in \cref{tab:fm_imh_acceptance}, the acceptance rate is lowest for $x^\star_1$, on the concentrated posterior by an order of magnitude, although its posterior is the least concentrated of the four: the posterior standard deviation of $\xi_2$ is $\num{9.1e-3}$ for $x^\star_1$ against $\num{5.2e-3}$ to $\num{5.6e-3}$ for the other three ground truths.
Concentration therefore does not explain this ordering, and a second, independent mechanism is present.

An amortized model is trained on pairs $(x,\vy)$ drawn from the joint
distribution, so its accuracy at a given $\vy$ depends on how much training data lay near that $\vy$. The relevant statistic is the position of $\vy$ in the distribution of \emph{observations} seen during training, which we measure by the normalized radius
\begin{equation}\label{eq:yradius}
    r(\vy) \;=\; \frac{1}{\sqrt{\nobs}}
    \left\| \frac{\vy - \mu_y}{\sigma_y} \right\|_2 ,
\end{equation}
with $\mu_y, \sigma_y$ the componentwise mean and standard deviation of the training observations, compared against the distribution of $r$ over the training set itself.
In the setting of \cref{fig:dram_vs_fm_1}, the observations of $x^\star_1, \dots, x^\star_4$ have radii $r(\vy) = 2.53$, $0.30$, $0.88$ and $0.87$, against a median training radius of $0.85$.
The radius orders the four ground truths essentially as the acceptance rate does, e.g., \num{4.65}\%, \num{51.63}\%, \num{39.85}\% and \num{39.89}\% for the combination of \texttt{gauss} and \texttt{weights} with $\nhidden = 2400$ in \cref{tab:fm_imh_acceptance}.
The sine observation lies at $2.96$ times the median training radius, with only $0.12\%$ of the $2 \times 10^6$ training observations beyond it: the extreme tail of the prior predictive distribution, where the model has almost no training data.
Note that the criterion is a property of the \emph{observation} and not of the ground-truth field.
The piecewise constant truth is no more a draw of the Mat\'ern prior than the sine is, yet its observation is the most central of the four and it yields the highest acceptance rate; conversely the sine, which is a single Karhunen--Loève mode, produces the most extreme observation.

On the concentrated posterior, the modifications of \cref{subsubsec:visual} raise the acceptance rate for $x^\star_1$ by at most $4$ percentage points, against $9$ to $40$ points for the other three ground truths (\cref{tab:fm_imh_acceptance}).
This is consistent with an absence of training support rather than of capacity, and the same phenomenon recurs for a different forward operator in \cref{subsubsec:eit_support}.
A similar loss of efficiency for out-of-distribution observations has been reported for importance sampling with amortized neural posteriors in gravitational-wave inference \cite{DaxEtAl2023}.

\subsubsection{Parameter studies} \label{subsubsec:heat_studies}

\Cref{fig:infinite_heat_study_nobs} varies the number of observations $\nobs$ and the noise variance $\gamma^2$ at fixed $\nkl = 128$, and \cref{fig:infinite_heat_study_nkl} varies $\nkl$ at fixed $\nobs = 32$, in both cases for the combination of \texttt{gauss} and \texttt{weights} with $\nhidden = 800$ from \cref{subsubsec:visual}, trained separately for each setting.
The solid curves show the mean over $x^\star_2, x^\star_3, x^\star_4$; $x^\star_1$ is drawn separately as a dashed curve since, for the reason given in \cref{subsubsec:prior_mass}, it lies below the range of the other three at nearly every point of both studies.

\begin{figure}[tbp]
    \centering
    \includegraphics[width=0.245\linewidth]{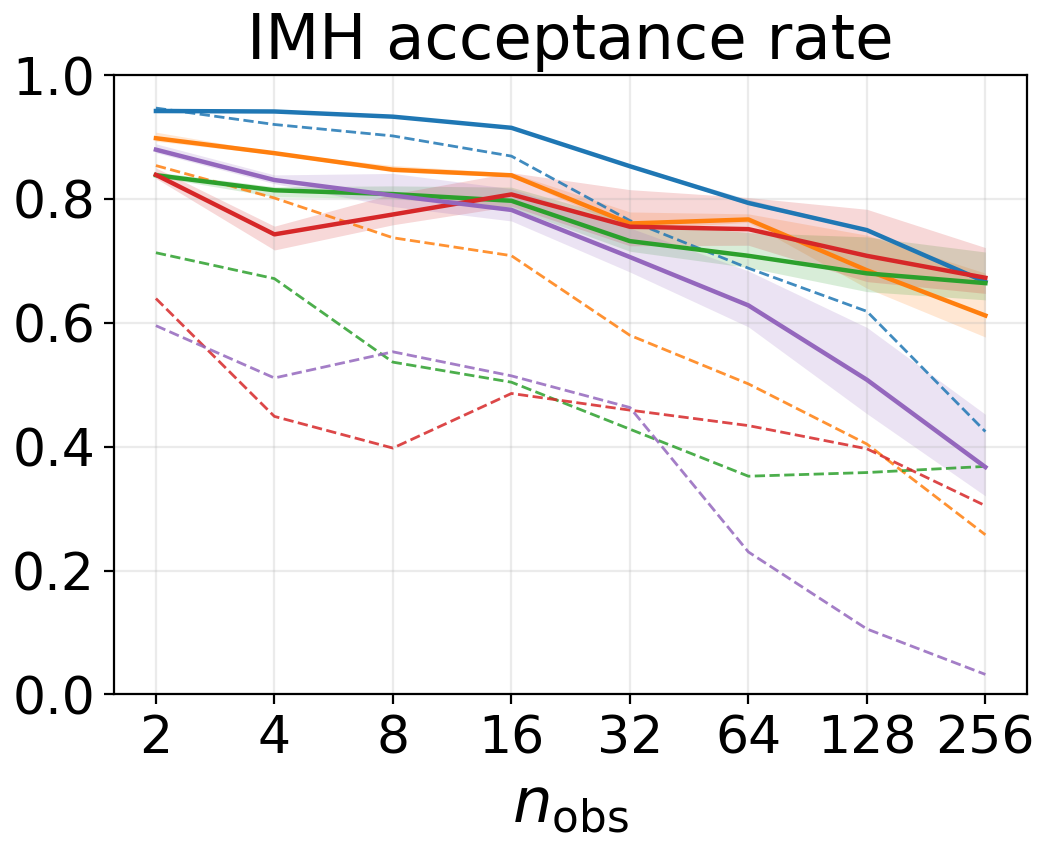}%
    \hfill
    \includegraphics[width=0.245\linewidth]{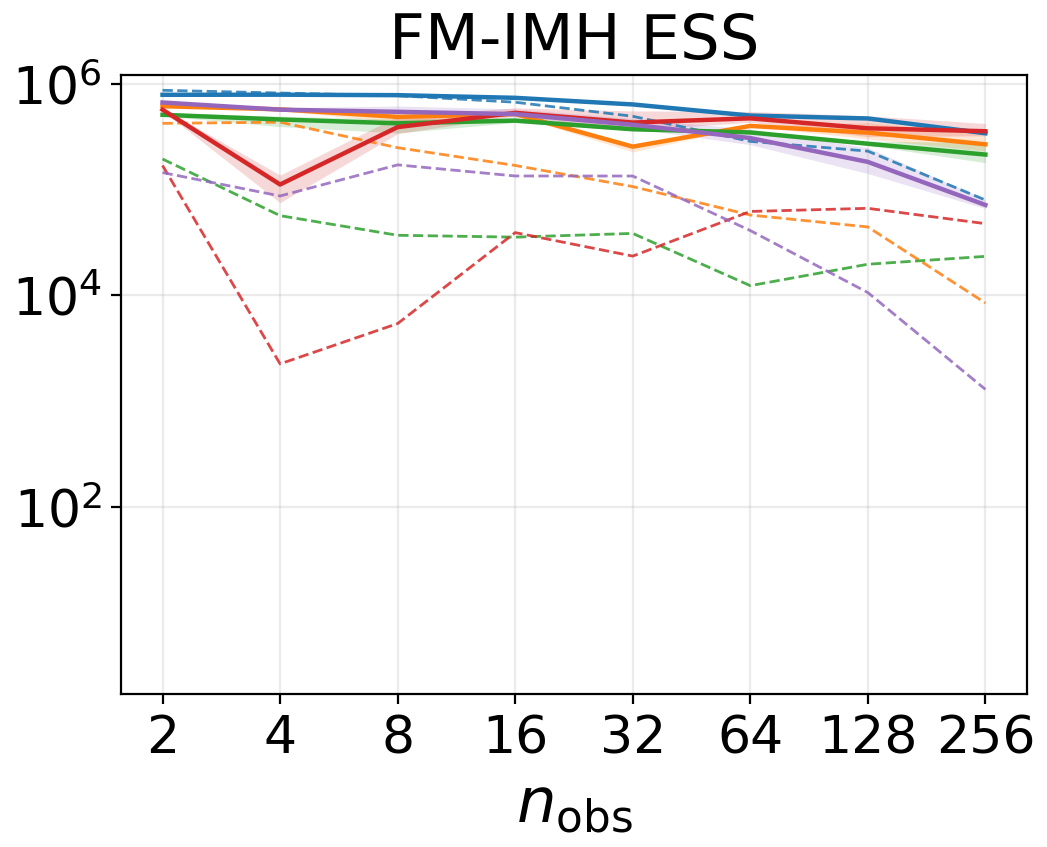}%
    \hfill
    \includegraphics[width=0.245\linewidth]{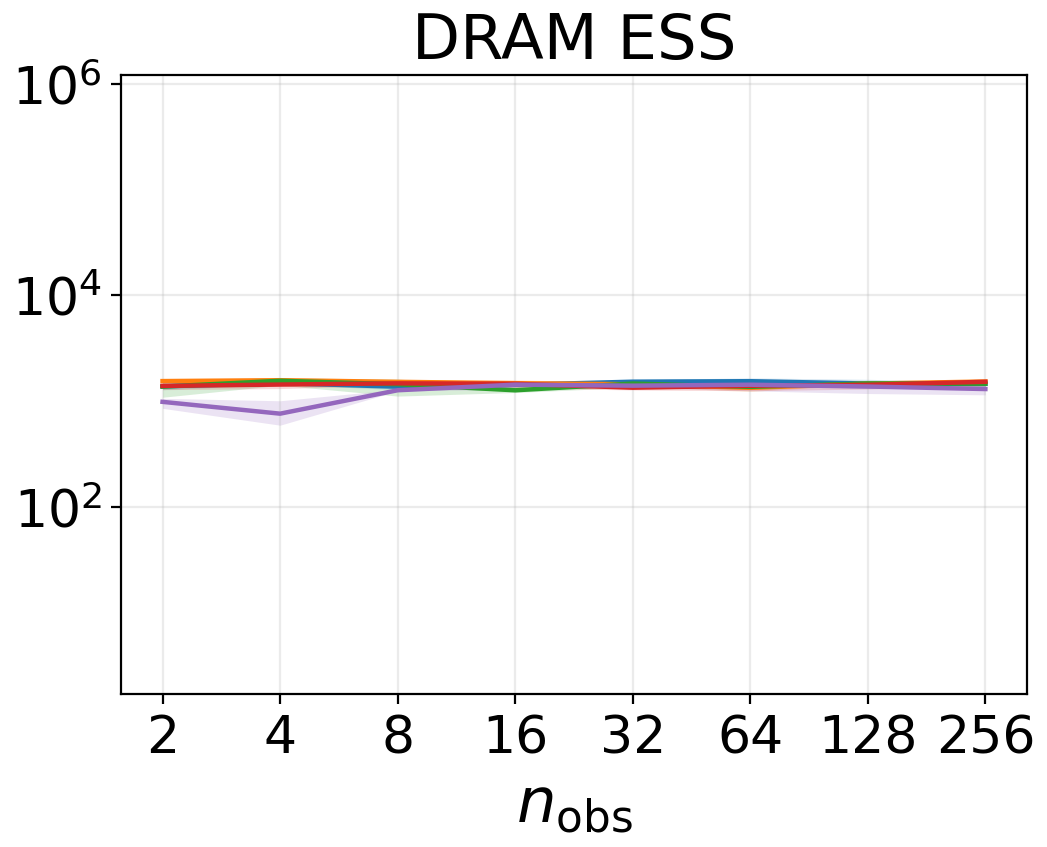}%
    \hfill
    \includegraphics[width=0.245\linewidth]{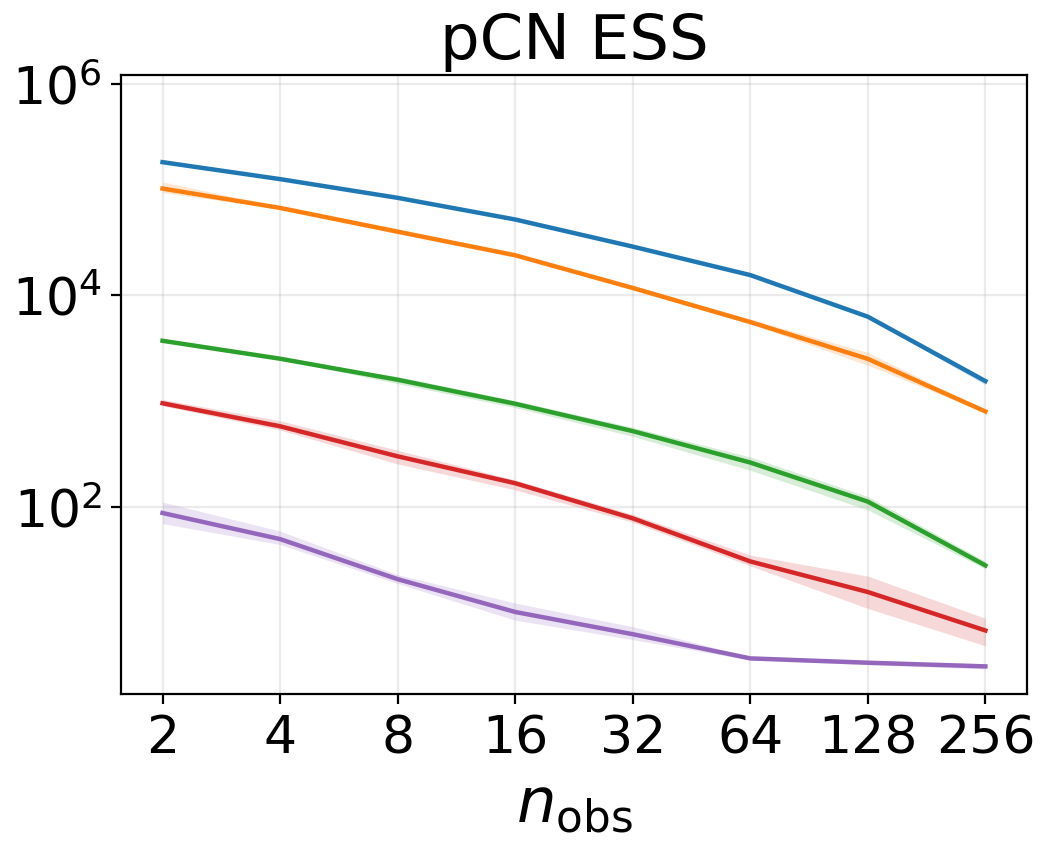}%

    \includegraphics[width=\linewidth]{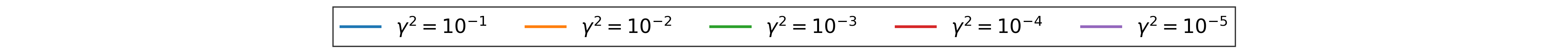}
\caption{Influence of $\nobs$ and the observation noise $\gamma^2$ at
$\nkl = 128$. From left to right: acceptance rate and effective sample size of the
FM-IMH sampler with the combination of \texttt{gauss} and \texttt{weights} of \cref{subsubsec:visual}, $\nhidden = 800$;
effective sample sizes of the DRAM and pCN
reference chains.  Solid curves: mean over ground truths $x^\star_2, x^\star_3, x^\star_4$ and three
training seeds, shaded band: range across these ground truths; dashed curves: ground
truth~$x^\star_1$.  Effective sample sizes are minima over the $\nkl$ latent
coefficients~$\xi_i$ of \num{1000000}~samples, on one common scale in all
panels.}
\label{fig:infinite_heat_study_nobs}

    \vspace{\baselineskip}
    \includegraphics[width=0.245\linewidth]{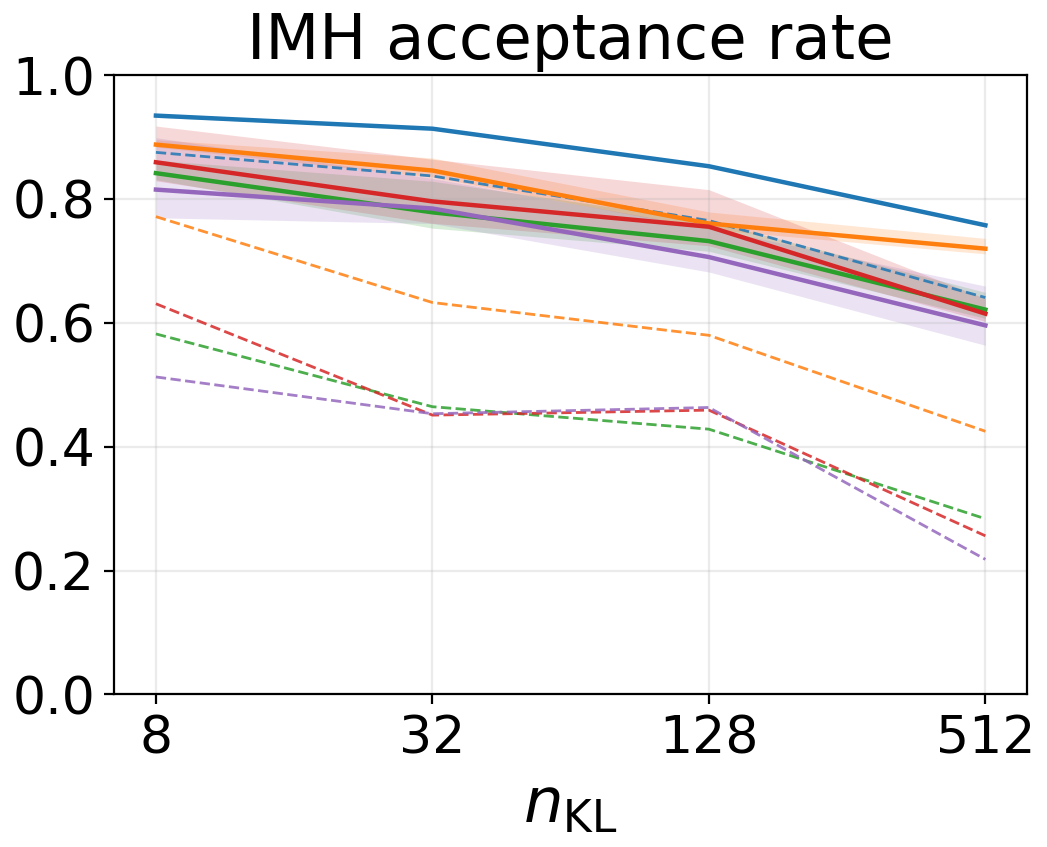}%
    \hfill
    \includegraphics[width=0.245\linewidth]{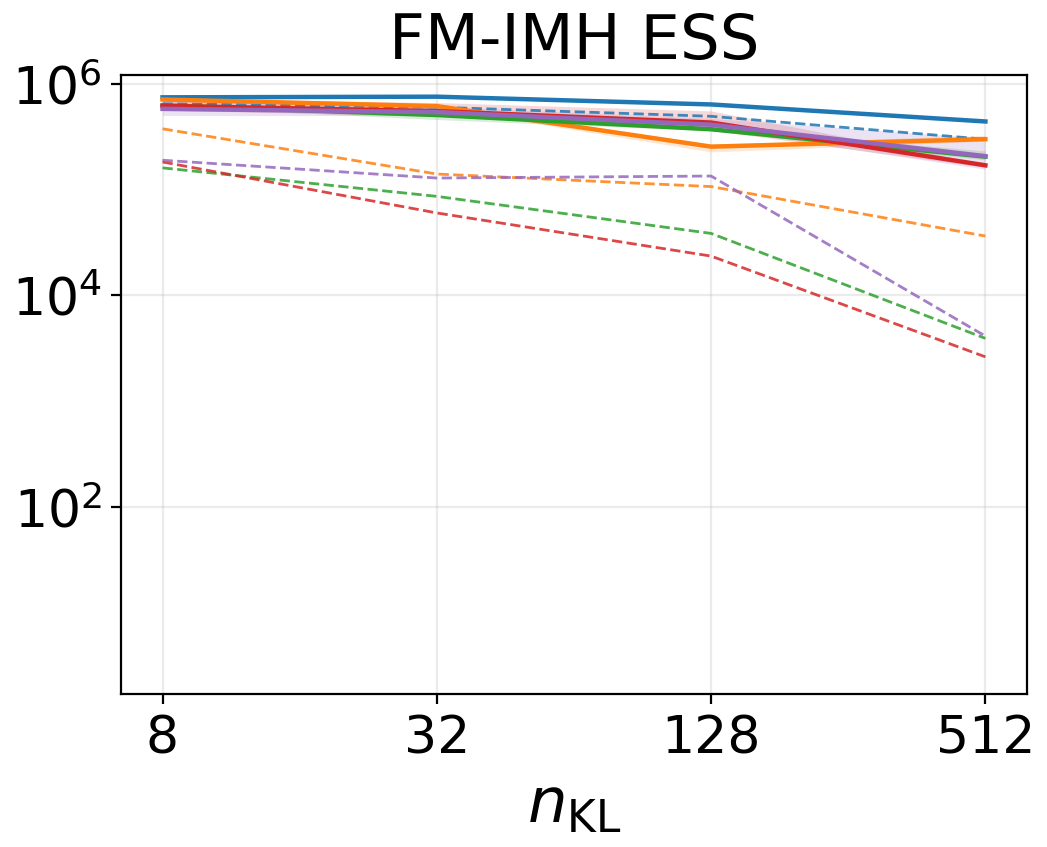}%
    \hfill
    \includegraphics[width=0.245\linewidth]{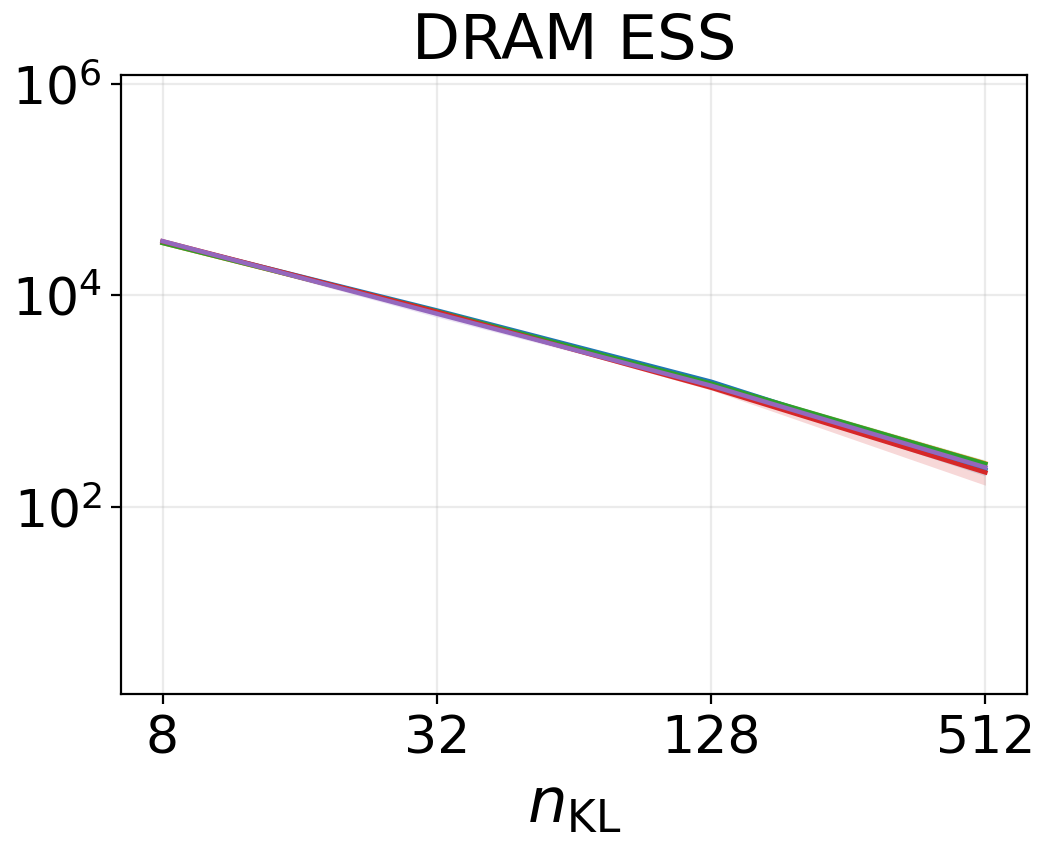}%
    \hfill
    \includegraphics[width=0.245\linewidth]{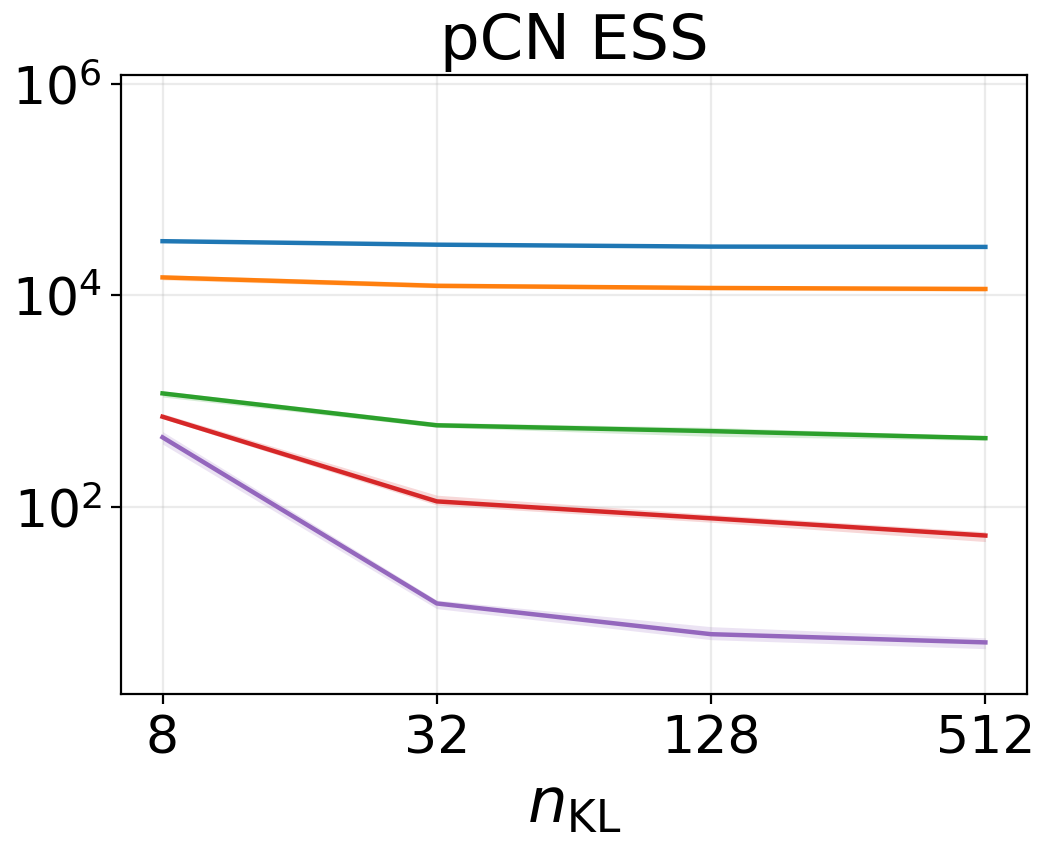}%
    \captionsetup{singlelinecheck=false}
\caption{As \cref{fig:infinite_heat_study_nobs}, but at $\nobs = 32$
with $\nkl$ varying.}
\label{fig:infinite_heat_study_nkl}
\end{figure}

The acceptance rate of FM-IMH decreases as the posterior concentrates.
It falls with $\nobs$ at every noise level, most strongly for the smallest noise variance, from about $88\%$ at $\nobs = 2$ to $37\%$ at $\nobs = 256$ for $\gamma^2 = 10^{-5}$ (left panel of \cref{fig:infinite_heat_study_nobs}).
It also falls with $\nkl$ (left panel of \cref{fig:infinite_heat_study_nkl}).

The ESS of FM-IMH is of order $10^5$ to $10^6$ for $x^\star_2, x^\star_3, x^\star_4$ throughout both studies, and above that of DRAM for all four ground truths.
The ESS of DRAM hardly depends on $\nobs$ and $\gamma^2$, whereas that of pCN drops by several orders of magnitude as the posterior concentrates, for the reason discussed in \cref{subsubsec:visual}.
With respect to $\nkl$ the order is reversed: the ESS of DRAM falls with the chain dimension, while that of pCN levels off beyond $\nkl \approx 32$, which reflects the dimension independence of pCN.
\subsection{Electrical impedance tomography} \label{subsec:eit}

To illustrate the viability of our approach on an inverse problem known to be challenging, we consider electrical impedance tomography (EIT), a severely ill-posed inverse problem in which a spatially distributed electrical conductivity is recovered from a small number of  measurements at boundary electrodes.
We adopt the rigorous Bayesian formulation of \cite{dunlop2016bayesian} and work with all three prior distributions studied there --- log-normal, level set, and star-shaped --- extending the last from a single inclusion to two.
We refer to that paper for the underlying well-posedness theory and give here only the model, the priors, and the parameters needed to reproduce our experiments.

EIT is also the example where both the benefits and limitations of amortization become clear: a single conductivity image is one Bayesian inverse problem, but a clinical EIT device produces a new frame tens of times per second, and each frame is a new observation under the \emph{same} prior and the same forward model.
In this setting, a transport map need be trained only once, whereas a Markov chain must be restarted for each frame.

\subsubsection{The complete electrode model} \label{sec:cem}

Let $\DD \subset \mathbb{R}^2$ be a bounded domain with conductivity $\sigma : \DD \to \mathbb{R}$, and let $L$ electrodes be attached to disjoint arcs $(e_\ell)_{\ell=1}^L$ along the boundary $\partial \DD$ with contact impedances $\kappa_\ell>0$.
Given a current pattern $\vI=(I_\ell)_{\ell=1}^L$ with $\sum_\ell I_\ell=0$, we solve the complete electrode model (CEM) of \cite{somersalo1992existence} for the potential $u$ in $\DD$ and the electrode voltages $\vV=(V_\ell)_{\ell=1}^L$,
\begin{equation} \label{eq:cem}
\begin{cases}
    -\nabla\cdot(\sigma \nabla u) = 0 & \text{in } \DD,\\
    \int_{e_\ell} \sigma \, \partial_n u \, \mathrm{d}S = I_\ell, & \text{for } \ell=1,\dots,L,\\
    \hspace{3.0em}\sigma\, \partial_n u = 0 & \text{on } \partial \DD \setminus {\textstyle\bigcup_\ell} e_\ell, \\
u + \kappa_\ell\, \sigma\, \partial_n u = V_\ell & \text{on } e_\ell,\; \ell=1,\dots,L,
\end{cases}
\end{equation}
which accounts for the shunting effect of the electrodes and their contact impedances and has been found to agree well with experimental EIT data \cite{somersalo1992existence}.
The solution is unique up to an additive constant, which corresponds to the choice of ground potential; following \cite{dunlop2016bayesian} we fix this by taking the representative with $\sum_\ell V_\ell = 0$.

We take $\DD$ to be the unit disk with $L=16$ electrodes of contact impedance $\kappa_\ell = 0.01$, placed equidistantly along $\partial \DD$ and covering 50\% of the boundary.
We inject $J=15$ linearly independent adjacent current patterns of amplitude $0.1$.
We discretize \eqref{eq:cem} with continuous piecewise linear finite elements for the potential and a piecewise constant conductivity, taking on each triangle the value of $\sigma$ at its centroid.
Synthetic data are generated on a fine mesh of $75\,878$ triangular elements that conforms to the true inclusions, and the inversion runs on a coarser mesh of $\DD$ consisting of $12\,144$ elements; both meshes are comparable in size to those used in \cite{dunlop2016bayesian}.
Solving \eqref{eq:cem} once per pattern yields a $J \times L$ array of electrode voltages.
Since our measurement setup departs from \cite{dunlop2016bayesian}, we state it explicitly: each pattern is referenced to the electrode following its sink, and the two injecting electrodes together with the reference are dropped.
This leaves $13$ entries for each of the $J = 15$ patterns, i.e., $\nobs = 13 \cdot 15 = 195$ observations, and
\begin{equation} \label{eq:eit-data}
    \vy = F(\sigma) + \vvarepsilon \in \R^{\nobs},
    \qquad \vvarepsilon \sim \NN(0, \gamma^2 \, I_{\nobs}),
\end{equation}
with $\gamma = 2 \times 10^{-4}$, i.e., an observation noise variance of $4 \times 10^{-8}$.
In the notation of \eqref{eq:forward}, the unknown parameter $x$ is thus the conductivity $\sigma$ itself.
As shown in \cite[Prop.~2.7]{dunlop2016bayesian}, $F$ is continuous with respect to both uniform convergence and convergence in Lebesgue measure, which is what permits a rigorous Bayesian treatment directly on function space and is inherited by any prior pushed forward through a parametrization $\sigma = \sigma(\vxi)$ of the types to be considered below.

Throughout, $\vxi \in \R^{d}$ denotes the latent variable carrying a standard Gaussian prior, as the Karhunen--Lo\`eve coefficients $\xi$ do in \cref{sec:infinite_heat_example}, so that all three priors take the common form
\begin{equation}
\label{eq:eit-latent}
    \sigma = \sigma(\vxi), \qquad \vxi \sim \NN(0, I_d),
    \qquad
    \log \tdens^{\vy}(\vxi) = -\Phi(\vxi) - \tfrac12 \norm{\vxi}^2 + \text{const},
\end{equation}
with misfit $\Phi(\vxi) = \frac{1}{2\gamma^2} \norm{F(\sigma(\vxi)) - \vy}^2$.
Here $\sigma(\vxi)$ is short for the conductivity field $\sigma(\cdot\,; \vxi)\colon \DD \to \R$; where the spatial variable matters we write $\sigma(\vz) = \sigma(\vz; \vxi)$ and leave the dependence on $\vxi$ implicit.
The synthetic ground truth is that of \cite{dunlop2016bayesian}: a background $\sigma = 2$ containing two disjoint circular inclusions of radius $0.3$ at $\pm(0.4,0.4)$ with $\sigma = 1$.

\subsubsection{Priors} \label{sec:eit-priors}

All three priors of \cite{dunlop2016bayesian} are used, each written as a pushforward $\sigma = \sigma(\vxi)$ of the standard Gaussian latent in \eqref{eq:eit-latent}; typical draws are shown in \cref{app:eit-prior-draws}.
We describe only where our choices differ from theirs.

\paragraph{Log-normal}
$\sigma(\vz) = \exp(m_0 + G(\vz; \vxi))$ with a Gaussian random field $G$ with Whittle--Mat\'ern covariance, truncated to the $M\times M$ lowest eigenfunctions $\phi_{k\ell}$ of $-\Delta$ on $[-1,1]^2$ with homogeneous Neumann boundary conditions, i.e., normalized products of cosines $\phi_{k\ell}(\vz) \propto \cos\bigl(\tfrac{k\pi}{2}(z_1+1)\bigr)\cos\bigl(\tfrac{\ell\pi}{2}(z_2+1)\bigr)$, $\vz = (z_1, z_2)$, with eigenvalues $\lambda_{k\ell}$,
\begin{equation} \label{eq:eit-lognormal}
  G(\vz; \vxi) = \sum_{k,\ell=0}^{M-1} \sqrt{\mu_{k\ell}}\,\phi_{k\ell}(\vz)\,\xi_{k\ell},
  \qquad
  \mu_{k\ell} = q\,(\tau^2 + \lambda_{k\ell})^{-\alpha},
  \qquad
  \lambda_{k\ell} = \tfrac{\pi^2}{4}(k^2+\ell^2),
\end{equation}
with $m_0 = \tfrac12\log 2$, $\tau = 40$, $\alpha = 6$ and $q = 10^{16}$ as in \cite{dunlop2016bayesian}, giving a marginal standard deviation of $G$ of about $0.125$.
\textcite{dunlop2016bayesian} define the same prior on the square, truncate at $M = 128$ and restrict to the disk; we truncate at $M = 32$, $d = 1024$, which already captures $99.4\%$ of the pointwise prior variance at $M = 128$.
At $\gamma = 2\times 10^{-4}$ our data have a mean relative error $\ev{\|\vvarepsilon\|}/\abs{F(\sigma_{\mathrm{true}})} = 1.2\%$.\footnote{\textcite{dunlop2016bayesian} report a relative error of $10\%$ but do not state how it is defined. We were unable to reproduce this value, neither with our measurement setup nor with theirs, which uses absolute electrode voltages.}

\paragraph{Level set}
$\sigma(\vz) = 1 + \mathbf 1\{G(\vz; \vxi) > 0\}$ with $G$ as in \eqref{eq:eit-lognormal}, $\tau = 35$, $\alpha = 5$ and $M = 32$.
Since $G(\vz; c\,\vxi) = c\, G(\vz; \vxi)$ and only its sign enters, $\sigma(c\,\vxi) = \sigma(\vxi)$ for every $c > 0$.
The amplitude $q$ is therefore irrelevant, and we normalize $G$ to unit marginal standard deviation.
The same invariance makes the latent posterior scale-invariant.

\paragraph{Star-shaped, two inclusions}
\textcite{dunlop2016bayesian} parametrize the boundary of a single inclusion, which cannot represent their own two-inclusion truth, and name several inclusions as future work.
We use two, each with its own center $c_j$ (a shared center would force nested inclusions), log-size $\rho_{j0}$ and Fourier boundary,
\begin{equation}\label{eq:starshaped}
  \partial\Omega_j = \{\vc_j + r_j(\vartheta)(\cos\vartheta,\sin\vartheta)\}, \quad
  r_j(\vartheta) = \exp\Bigl(\rho_{j0} + \sum_{k=1}^{5} a_{jk}\cos k\vartheta + b_{jk}\sin k\vartheta\Bigr),
\end{equation}
with $a_{jk}, b_{jk} \sim \NN(0, \gamma_k^2)$, $\gamma_k = 0.3\,k^{-2}$, $\vc_j \sim \NN(0, 0.3^2 I_2)$ and $\rho_{j0} \sim \NN(\log 0.3 - \tfrac12\sum_k\gamma_k^2,\ 0.15^2)$.
Inclusions have conductivity $1$ on a background of $2$ and we impose no non-overlap constraint.
The latent dimension is $d = 26$.
This is the only one of the three priors whose support contains the truth.

For both two-valued priors an element changes value only when an interface crosses its centroid, so $\partial_{\vxi}\sigma = 0$ almost everywhere.
Measured on the inversion mesh, a random latent perturbation of size $10^{-3}$ changes no element at all, whereas under the log-normal prior it changes all of them.
Gauss--Newton Hessians, Laplace approximations and likelihood-informed subspaces are therefore unavailable for these two priors.

\subsubsection{Results}
\label{sec:eit-results}\label{sec:eit-ood}\label{subsubsec:eit_support}

For each prior we condition the flow on $\vy$, draw $n = 10\,000$ samples, and compare them with an MCMC reference posterior, whose step sizes were tuned on stationary states; \cref{fig:eit-vobs-grid} shows the posterior means, and \cref{app:eit-vobs} gives the full comparison including standard deviations and the samplers.
The same samples serve as proposals for IMH, for which we report the acceptance rate, the number of accepted moves and the effective sample size (ESS) of the chain (\cref{tab:eit-imh-all}).
We give the ESS for two quantities along the chain: the smallest over the latent coordinates $\xi_i$, as in \cref{sec:infinite_heat_example}, and the mean of the conductivity $\sigma(\vxi)$ over all elements.
For the two-valued priors the mean and the standard deviation carry the same information, $\mathrm{std} = \sqrt{p(1-p)}$ with $p = \prob(\sigma = 1) = 2 - \text{mean}$, so we score both through the inclusion probability $p$:
\begin{itemize}
\item
Under the star-shaped prior the flow reproduces the reference almost exactly, with a correlation of $\operatorname{corr}(p_{\mathrm{FM}}, p_{\mathrm{MCMC}}) = 0.9989$ between the inclusion probabilities
and an inclusion area of $0.182$ for both methods against a true value of $0.178$.
\item
Under the log-normal prior the two posterior means agree to a correlation of $0.995$ and the standard deviations to $0.979$.
Both means are far from the truth, because the prior places almost no mass near it: on the log scale, the inclusion and background values lie 2.8 marginal standard deviations below and above the prior mean, and the smooth, truncated fields of \eqref{eq:eit-lognormal} cannot form sharp interfaces.
This is a property of the posterior, not of either sampler.
The same mismatch places $\vy$ out of distribution for the flow, as discussed below.
\item
Under the level-set prior the flow locates both inclusions with $\operatorname{corr}(p_{\mathrm{FM}}, p_{\mathrm{MCMC}}) = 0.975$ but is over-dispersed: its standard deviation is $1.5$ times that of the reference, and it assigns an inclusion area of $0.222$ against $0.180$.
\end{itemize}
On $\vy$, only the star-shaped flow yields a usable IMH chain: it accepts $7.5\%$ ($753$ moves, ESS $78$ in $\vxi$ and $192$ in the mean conductivity), against $0.20\%$ and $0.13\%$ under the log-normal and the level-set prior (\cref{tab:eit-imh-all}).
\Cref{fig:eit-vobs-grid} shows that the flow reproduces the reference posterior mean under all three priors, including the two under which IMH accepts almost no steps.
The low acceptance there has two causes.
First, $\vy$ is out of distribution for both priors: its normalized radius \eqref{eq:yradius} is $r(\vy) = 3.73$ under the log-normal and $r(\vy) = 1.65$ under the level-set prior, against a training median of about $0.97$, and of the $2\times10^6$ training observations none and $530$ ($0.03\%$), respectively, lie as far out, whereas under the star-shaped prior $r(\vy) = 0.94$ is typical, exceeded by $35\%$ of the training observations.
Under the level-set prior this is visible directly in the fields: a prior draw covers about half of the disk with inclusion (area fraction $0.500 \pm 0.046$), while the true inclusions cover only $0.178$.
The flow has therefore seen almost no data like $\vy$ and has to extrapolate, and neural networks are notoriously unreliable outside the range of their training data \cite{XuEtAl2021}: the training objective does not constrain the learned velocity field there, so its behavior is set by the architecture rather than by the data.
Second, the level-set latent posterior is hard to match for any flow, even in distribution: since $\sigma$ depends on $\vxi$ only through the sign of $G$, $\tdist^{\vy}$ is scale-invariant, and it is thin in every latent direction, because at the interface even modes carrying a few percent of the prior variance decide the sign; small latent errors of the flow therefore change the data misfit strongly, and the likelihood of its proposals varies over many orders of magnitude.
The acceptance rate certifies the flow as a sampler of the latent posterior $\tdist^{\vy}$, not of the conductivity: scored on the identifiable inclusion probability $p$, the level-set flow agrees with the pCN reference at a correlation of $0.975$.

We discuss the out-of-distribution position of $\vy$ separately, because it limits where amortized inference can be applied.
MCMC requires only that the prior have \emph{support} where the posterior concentrates; the likelihood supplies the transport.
An amortized flow never sees the likelihood at training or test time --- the prior \emph{is} its training distribution --- so it requires prior \emph{mass} there.
Support is therefore not enough: a prior can contain the truth exactly and still place almost no mass near it, and the flow then has to extrapolate as under the other two priors.
For the star-shaped prior we therefore choose the size and center priors such that the truth is also a typical draw.

\begin{figure}[htbp]
    \centering
    \includegraphics[width=\linewidth]{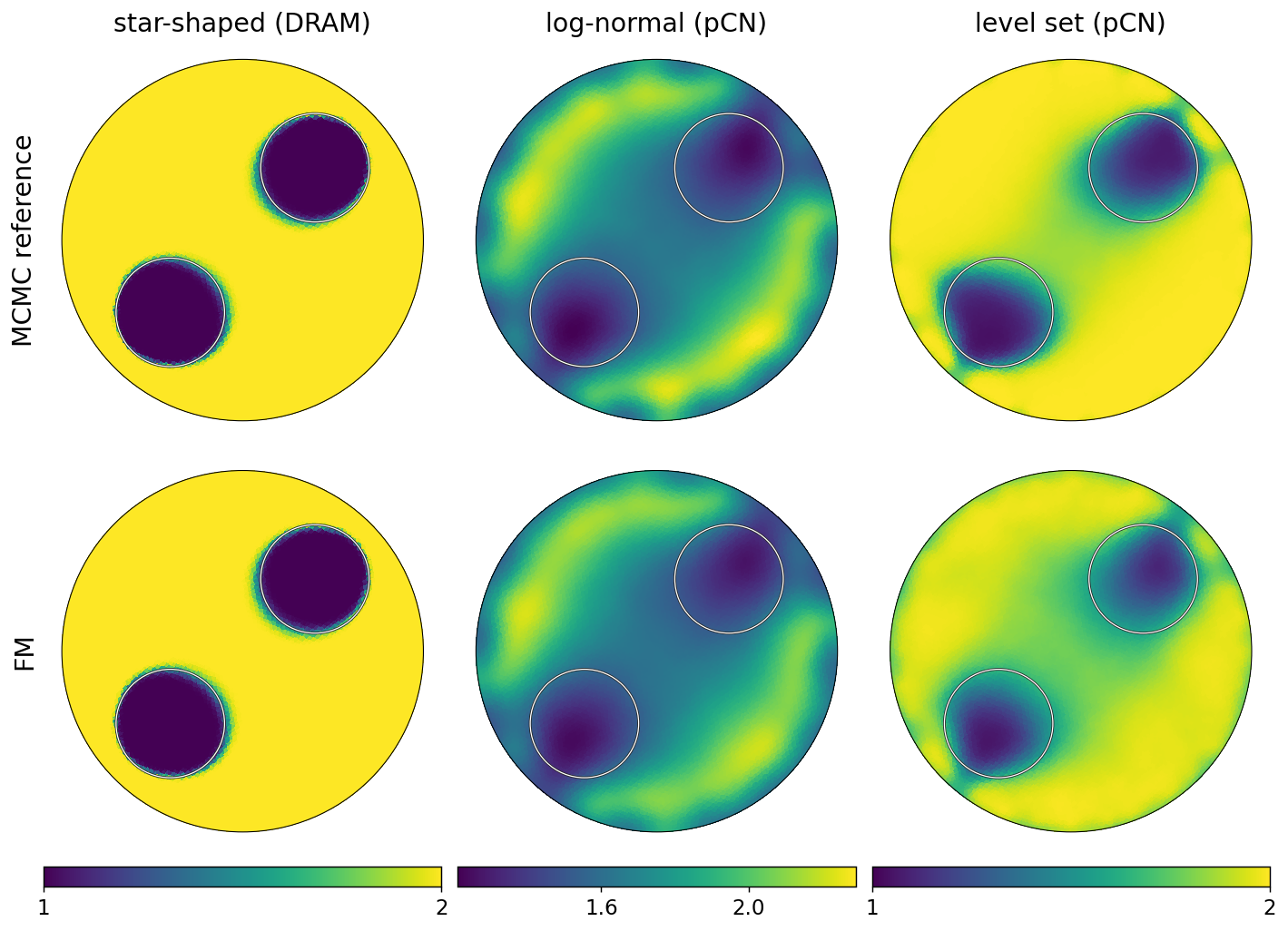}
    \caption{Posterior means of the conductivity conditioned on $\vy$, one column per prior: MCMC reference (top) and flow matching with $10\,000$ samples (bottom), on a color scale shared within each column. Circles mark the true inclusions. Standard deviations and the sampler settings are in \cref{app:eit-vobs}.}
    \label{fig:eit-vobs-grid}
\end{figure}

On observations drawn from the prior, which are in distribution by construction, the picture changes.
Over $20$ prior-draw observations, IMH accepts $36.7\% \pm 2.5\%$ (mean $\pm$ standard deviation) under the log-normal prior and $26.2\% \pm 17.9\%$ under the star-shaped prior, but only $0.11\% \pm 0.02\%$, and at most $0.15\%$, under the level-set prior (\cref{tab:eit-imh-all}).
For the star-shaped prior we exclude draws in which an inclusion crosses the boundary of the disk and use the first $20$ remaining ones.
Under the log-normal prior the low acceptance on $\vy$ is therefore a consequence of its out-of-distribution position and not of the flow, whereas under the level-set prior it persists for the second reason above.
The wide spread under the star-shaped prior, from $0.3\%$ to $52.8\%$, comes from the observations rather than from the flow: acceptance is lowest where a drawn inclusion boundary comes close to the electrodes.
\Cref{fig:eit-priorobs} in \cref{app:eit-priorobs} shows the posterior means for one such observation, the second block of \cref{tab:eit-imh-all}.

Low ESS values should be read as a warning rather than as a measurement: where a chain makes only a few dozen moves, the ESS estimate itself is unreliable.
Such values arise from a drawback of IMH: once the chain reaches a state with a large importance weight~$w$, i.e., where the posterior density is large relative to the flow density, it rejects almost every subsequent proposal until one of comparable weight arrives, and remains stuck at that state in the meantime.
Unlike a local sampler such as pCN, it cannot move away through small steps, since its proposals are drawn independently of the current state.
Interleaving IMH steps with local moves, as in \cite{GabrieEtAl2022}, would mitigate this.

\paragraph{Implementation details}
The architectures were obtained by a small and non-exhaustive hyperparameter search.
All three flows are MLPs with the Gauss head \eqref{eq:gauss_head}, trained on $2\times10^6$ pairs, validated on a further $2\times10^5$, and integrated with RK4 in $25$ steps.
For the star-shaped prior the MLP has $5$ hidden layers of width $800$ with swish activations and is trained for $300$ epochs; for the log-normal prior it has $4$ hidden layers of width $800$ with GELU activations and is trained for $120$ epochs.
For the level-set prior we set $\hat\mu \equiv 0$ and $\psi_i \equiv 1$ in \eqref{eq:gauss_head}, so that the linear part is the exact velocity between two standard Gaussians, and the MLP has $3$ hidden layers of width $800$ with swish activations and is trained for $120$ epochs.

\begin{table}[htbp]
\centering\footnotesize
\caption{IMH with $n = 10\,000$ flow-matching proposals (RK4, $25$ steps) on $\vy$, on the prior draw of \cref{fig:eit-priorobs}, and over $20$ prior-draw observations. The ESS is given for the smallest over the latent coordinates $\xi_i$ and for the mean conductivity over all elements.}
\label{tab:eit-imh-all}
\begin{tabular}{lccc}
\toprule
 & star-shaped & log-normal & level set \\
\midrule
\multicolumn{4}{l}{\emph{Observation $\vy$ (two-inclusion truth), shown in \cref{fig:eit-vobs-grid}}} \\
\quad acceptance rate & $7.5\%$ & $0.20\%$ & $0.13\%$ \\
\quad accepted moves & $753$ & $19$ & $12$ \\
\quad ESS ($\vxi$) & $78$ & $3$ & $6$ \\
\quad ESS (mean $\sigma$) & $192$ & $4$ & $21$ \\
\addlinespace
\multicolumn{4}{l}{\emph{Prior draw (highest star-shaped acceptance of the 20), shown in \cref{fig:eit-priorobs}}} \\
\quad acceptance rate & $52.8\%$ & $38.6\%$ & $0.09\%$ \\
\quad accepted moves & $5\,279$ & $3\,857$ & $8$ \\
\quad ESS ($\vxi$) & $1\,841$ & $352$ & $6$ \\
\quad ESS (mean $\sigma$) & $2\,906$ & $1\,029$ & $9$ \\
\addlinespace
\multicolumn{4}{l}{\emph{Acceptance over $20$ prior draws}} \\
\quad mean $\pm$ sd & $26.2 \pm 17.9\%$ & $36.7 \pm 2.5\%$ & $0.11 \pm 0.02\%$ \\
\quad median & $29.7\%$ & $36.5\%$ & $0.11\%$ \\
\quad range & $0.3$--$52.8\%$ & $30.9$--$42.7\%$ & $0.07$--$0.15\%$ \\
\bottomrule
\end{tabular}
\end{table}

\subsubsection{Dynamic Bayesian flow matching} \label{sec:eit-dynamic}

The static comparison assesses whether the flow is accurate; a clinical EIT device also requires sampling fast enough to image moving structures, and here the amortized model has a clear advantage over MCMC.
The star-shaped flow of \cref{sec:eit-priors} is trained only once and then conditioned in turn on the 195 measurements of each frame of a time-varying phantom; nothing is refitted between frames.
The data are generated on the fine mesh with the measurement protocol and noise level of $\vy$, and every frame is an ordinary observation under the star-shaped prior.
Two inclusions at $(\pm0.42, 0)$ breathe between radii $0.20$ and $0.35$, one cosine cycle per breath, over $720$ frames: two breaths with both inclusions, then four in which only the right one breathes while the left stays at its exhaled radius.
With $1\,000$ samples per frame (RK4, 10 steps) the flow follows both phases (\cref{fig:eit-dynamic-lung-stop}): after the second breath the reconstructed left inclusion stays at its exhaled size while the right one keeps breathing.
The posterior standard deviation appears to capture where the reconstruction is uncertain: it is concentrated in a band along each inclusion boundary, the only place the two-valued field can vary, and this band is wider on the side facing the center of the disk, far from the electrodes, where EIT resolves the boundary least well.
The posterior median deviates from the truth by an RMSE of $0.122$ on average, ranging from $0.056$ to $0.205$ over the frames.
Its inclusion area $\int_{\DD} \mathbf{1}_{\sigma(\vz) = 1}\,\mathrm{d}\vz$, with $\sigma$ the posterior median, differs from the exact area by $0.002$ on average and by at most $0.043$ in any frame, with a correlation of $0.997$ over the frames.
A frame takes $21.7$\,ms on a single consumer GPU (NVIDIA GeForce RTX 4070 Ti SUPER), i.e., $46$ frames per second.
\Cref{app:eit-orbit} shows a second sequence, in which two inclusions orbit the center and then merge.

\begin{figure}[htbp]
    \centering
    \includegraphics[width=\linewidth]{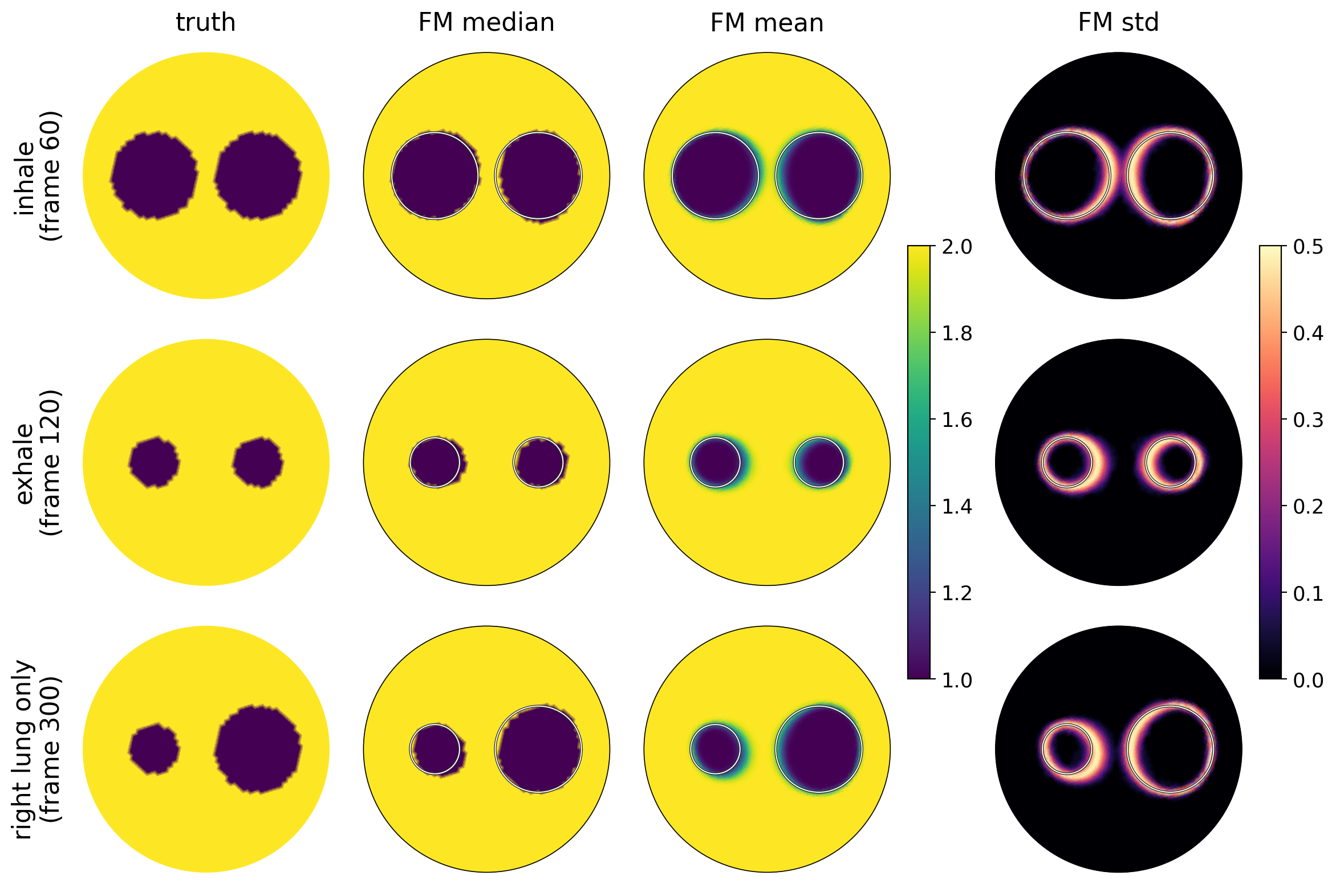}
    \caption{Star-shaped flow on the breathing sequence at three of its $720$ frames: both inclusions inhaled (frame $60$), both exhaled (frame $120$), and the right one inhaled after the left has stopped breathing (frame $300$). Columns: truth, posterior median, posterior mean and posterior standard deviation; $1\,000$ samples per frame, RK4 with $10$ steps, $21.7$\,ms per frame on one RTX 4070 Ti SUPER. White circles mark the true inclusions.}
    \label{fig:eit-dynamic-lung-stop}
\end{figure}

MCMC needs millions of sequential forward solves for every single observation.
All EIT flows, in contrast, are trained only once on $2\times10^6$ samples, with a further $2\times10^5$ for validation; the $2.2\times10^6$ forward solves required to generate these samples are independent and can be run in parallel.
Every subsequent frame is then processed by ODE integration alone, without any forward solve.
\subsection{Stochastic Lotka--Volterra} \label{subsec:lotka_volterra}

We consider the stochastic Lotka--Volterra model introduced in \cite{wilkinson2012stochastic}, which \cite{wang2023efficient} use as one of their computational benchmarks; we follow their setup.
The model consists of a continuous-time Markov jump process tracking interacting predator ($S_1$) and prey ($S_2$) populations.
The state vector $S(t) = (S_1(t), S_2(t))^\top$, starting from a fixed initial condition $S(0) = (50, 100)^\top$, evolves through four discrete transition events defined by the jump vectors $\vzeta_1 = (1, 0)^\top$, $\vzeta_2 = (-1, 0)^\top$, $\vzeta_3 = (0, 1)^\top$, and $\vzeta_4 = (0, -1)^\top$.
The rates of these transitions are governed by the propensity vector $\lambda(S, \vx)$, parametrized by $\vx \in \R^4$:
\begin{equation} \label{eq:lv_propensity}
  \lambda(S, \vx) = (x_1 S_1 S_2,\; x_2 S_1,\; x_3 S_2,\; x_4 S_1 S_2)^\top .
\end{equation}
Under stochastic mass-action kinetics, each propensity component is the product of a stochastic rate constant $x_j$ and a combinatorial function counting the distinct possible reactant interactions (e.g., $S_1S_2$ for binary predation, or $S_i$ for unary events).
The state trajectory $S(t)$ is not governed by a deterministic differential equation but is described exactly by the random time-change (Kurtz) representation
\begin{equation} \label{eq:lv_kurtz}
  S(t) = S(0) + \sum_{j=1}^{4} \vzeta_j\, N_j \left( \int_0^t \lambda_j(S(\tau), \vx) \,\mathrm{d}\tau \right),
\end{equation}
where $N_1, \dots, N_4$ are independent unit-rate Poisson processes.
Exact state trajectories generated by this system are simulated using Gillespie's algorithm, from which an observation vector $\vy \in \R^{\nobs}$, $\nobs = 9$, is constructed using time-series summary statistics (means, log-variances, and temporal auto/cross-correlations).
In a Bayesian context, the exact likelihood $\tdens(\vy \mid \vx)$ is analytically intractable because evaluating it requires marginalizing over the high-dimensional, combinatorial latent space of all possible random jump sequences and event times that could produce $\vy$.
The likelihood is therefore unavailable in closed form, which calls for likelihood-free methods such as approximate Bayesian computation (ABC).
Flow matching is applicable without modification, since its training requires only samples $(\vx, \vy) \sim \tdist$, which the simulator provides.
For $\vx$ we use the same prior as \cite{wang2023efficient}, where further details on the setup can be found.

\textcite{wang2023efficient} compare a CCNF, trained by minimizing the expected negative log-likelihood (NLL) \eqref{eq:nll_loss}, i.e., the KL divergence (see \cref{subsec:bfm}), against an ABC method.
According to the authors, their CCNF model performs comparably to the asymptotically exact Sequential Monte Carlo ABC (SMC-ABC) algorithm, while achieving a speedup factor of 30.

As discussed in \cref{subsec:bfm}, and motivated in \cite{LipmanEtAl2023, liu2022flow} for the unconditional case, flow matching speeds up the training of continuous normalizing flows significantly, as it is simulation-free and requires no ODE solves during training. In our experiments, this decreases the training time from \num{25.5}~minutes, using the exact setup\footnote{\url{https://github.com/EmoryMLIP/COT-Flow}} of \cite{wang2023efficient}, to approximately \num{90}~seconds, using Bayesian flow matching (\cref{subsec:bfm}) and our own architecture for the velocity field. Both training times are measured on our own hardware, an NVIDIA GeForce RTX~4070~Ti~SUPER, and the former is in line with the training time reported in \cite{wang2023efficient}. Both training sets are generated with the data generation script of \cite{wang2023efficient}, the larger one by increasing the number of samples to \num{500000}.
For the velocity field $v_\theta(t, \vx; \vy)$ we employ an MLP with six hidden layers, swish activation and a hidden dimension of $\nhidden=112$.

As the stochastic Lotka--Volterra problem does not have a tractable likelihood, we cannot compare against likelihood-based MCMC methods, and in particular cannot compute the error measures derived in \cref{subsec:diagnostics}.
Our focus is a qualitative comparison against the results provided by \cite{wang2023efficient} with the same amount of training data, i.e., \num{50000} and \num{500000} samples, respectively.
Our results are shown in \cref{fig:lv-large}.

In particular, when comparing our model trained on the large training set, displayed in \cref{fig:lv-large}, against the SMC-ABC result from \cite[Figures 2 and 3]{wang2023efficient}, the agreement is good.
Comparing against COT-Flow, which is the CCNF trained with the NLL/KL-divergence objective in \cite{wang2023efficient}, our model matches the asymptotically exact SMC-ABC more closely.

Using the smaller training set, the model does not learn the posterior distribution sufficiently well, as can be seen, e.g., by comparing the $x_3$-marginal for the second set of observations.
This, however, can also be observed in \cite[Figure 3]{wang2023efficient} and is due to the low number of training samples.

\begin{figure}[H]
    \centering
    \begin{subfigure}{.48\textwidth}
\includegraphics[width=0.9\linewidth]{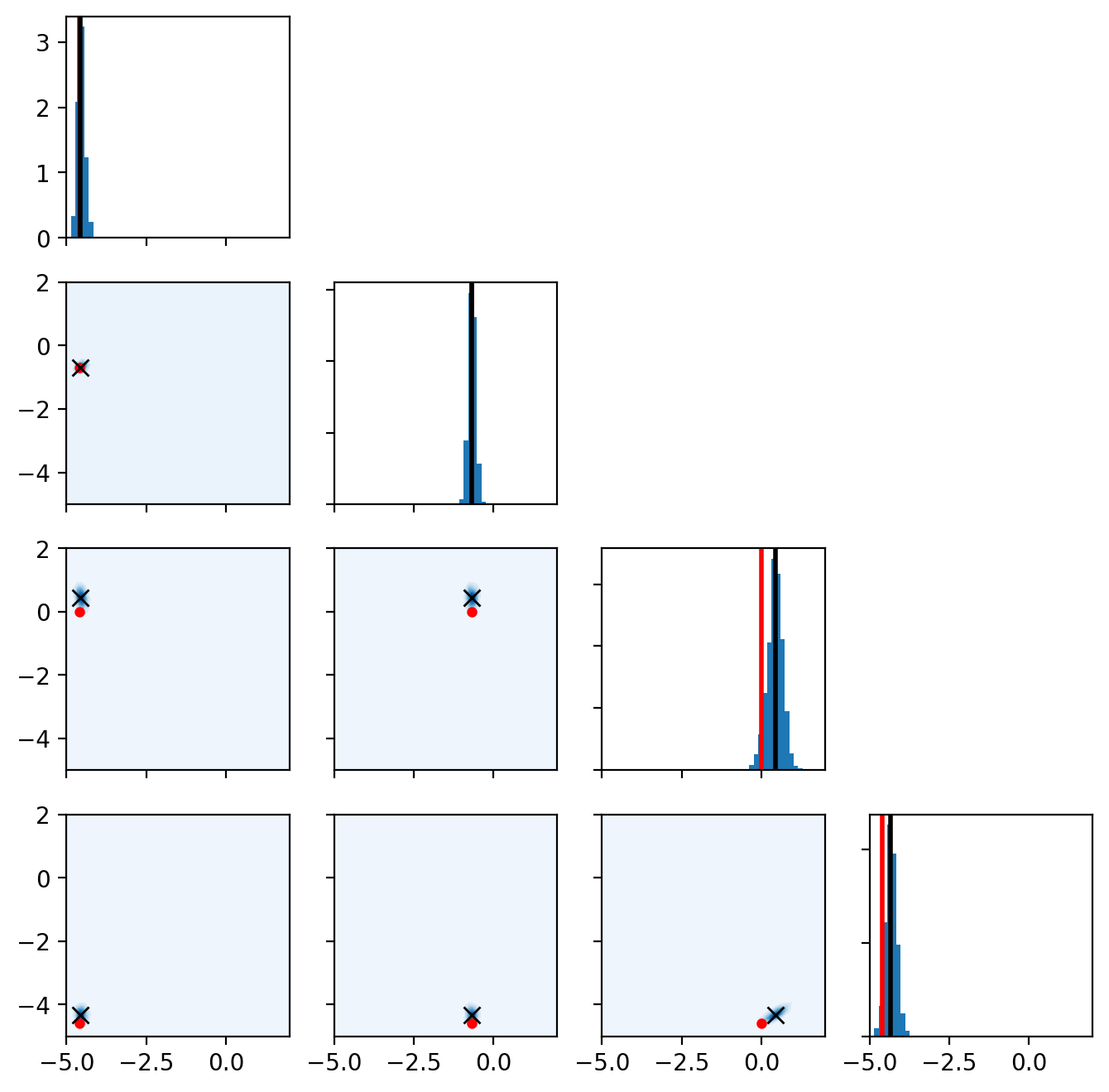}
\end{subfigure}\hfill%
    \begin{subfigure}{.48\textwidth}
       \includegraphics[width=0.9\linewidth]{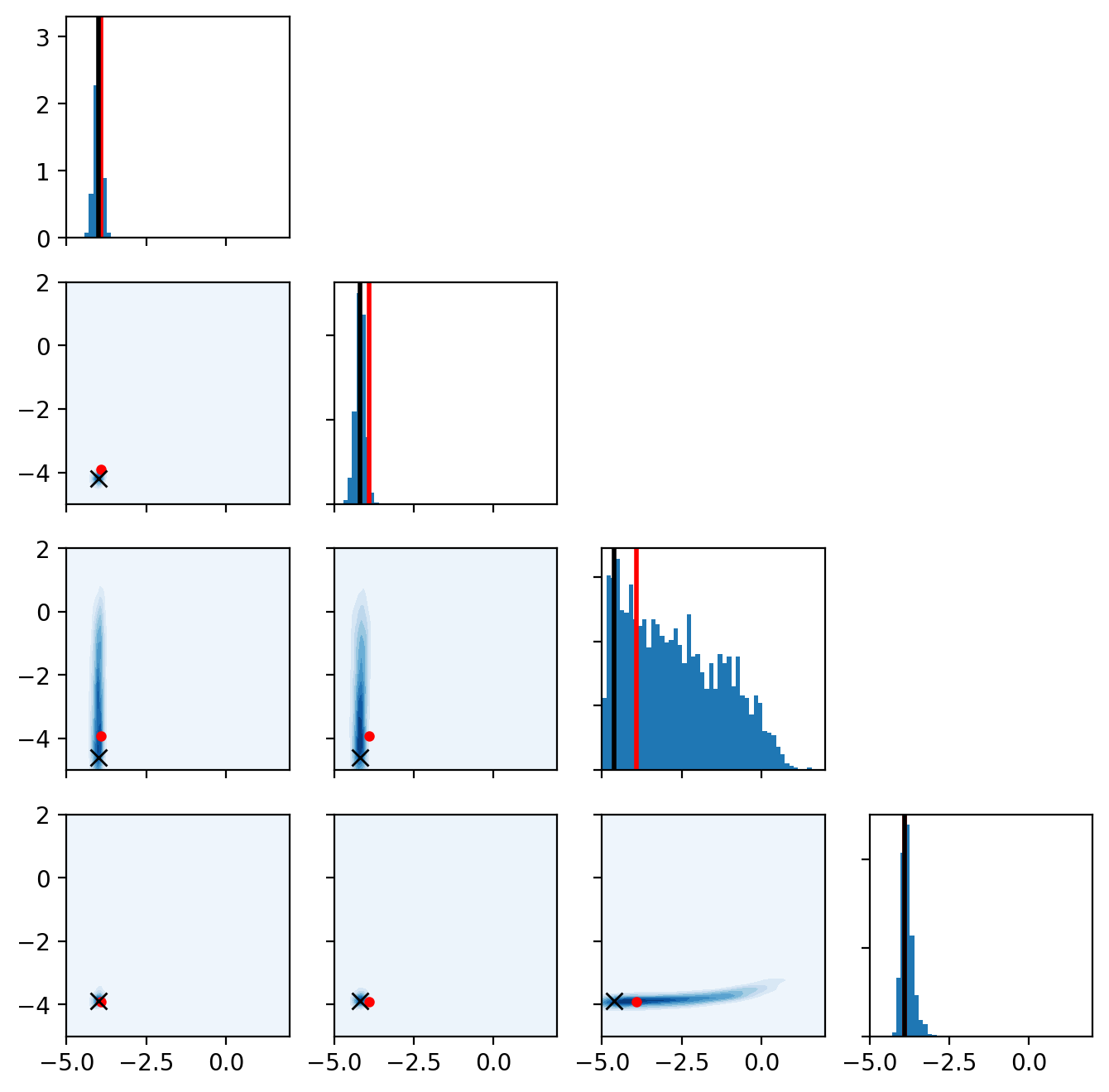}
    \end{subfigure}\\
    \begin{subfigure}{.48\textwidth}
\includegraphics[width=0.9\linewidth]{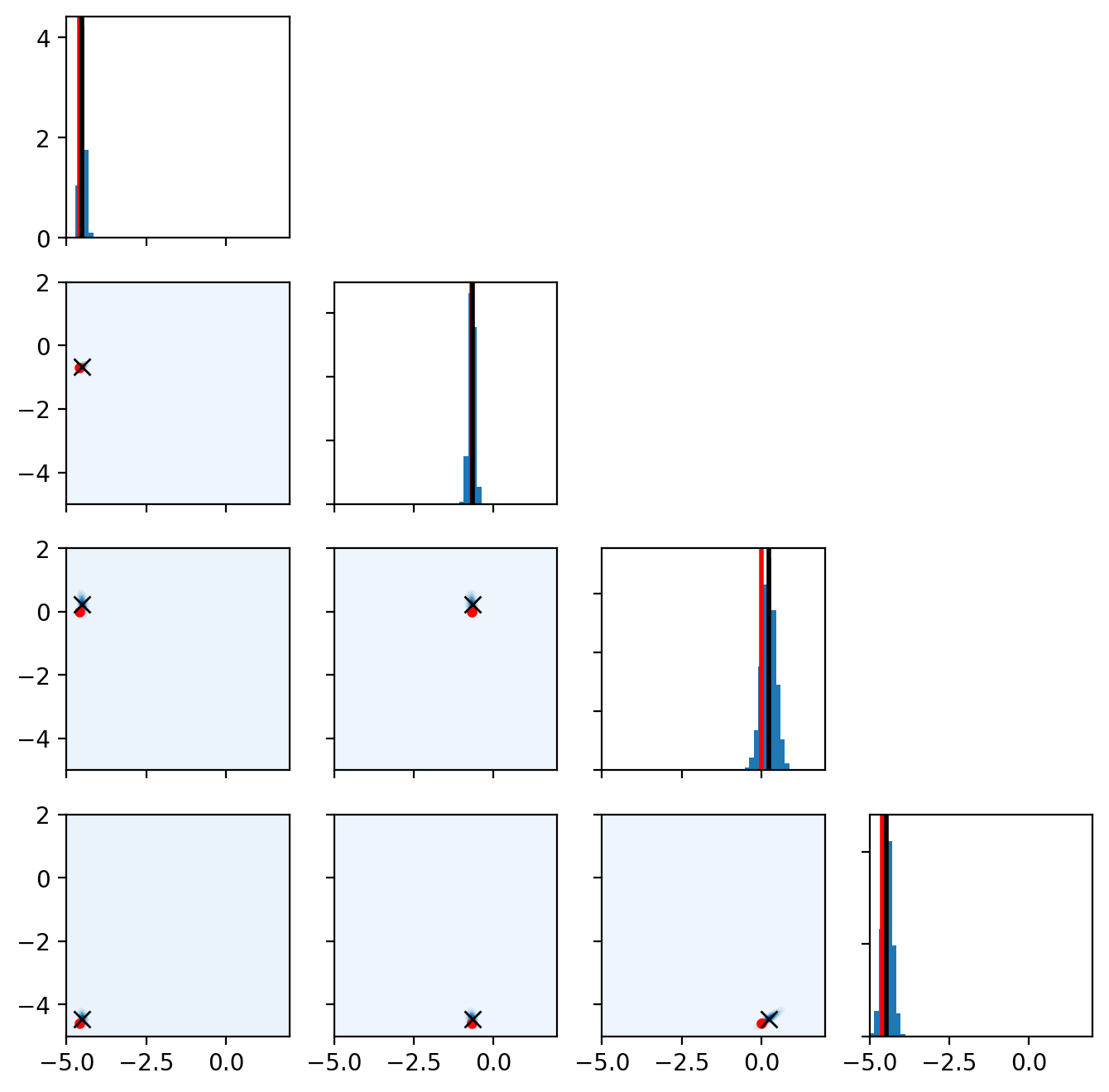}
\end{subfigure}\hfill%
    \begin{subfigure}{.48\textwidth}
       \includegraphics[width=0.9\linewidth]{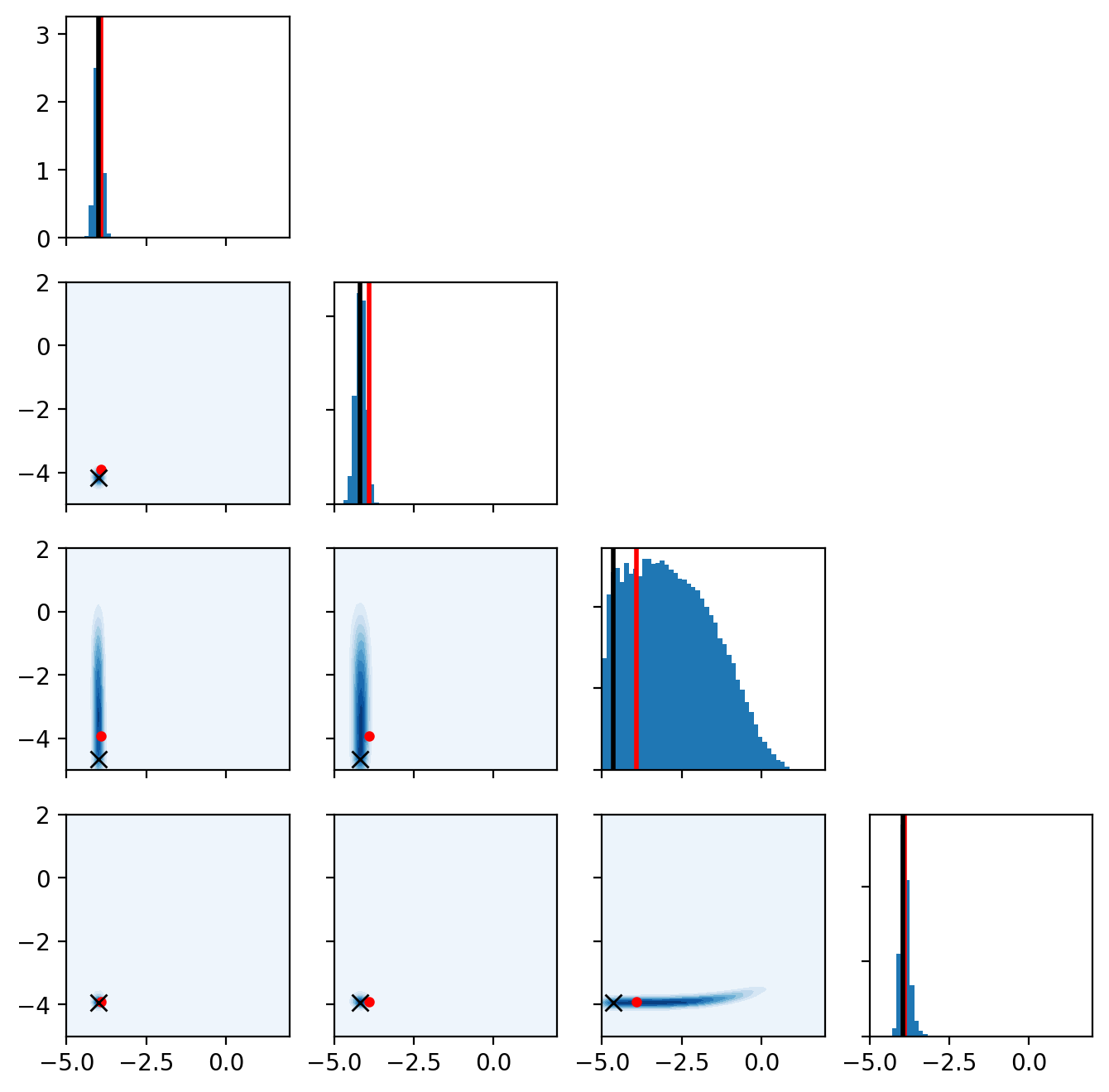}
    \end{subfigure}
    \caption{Flow matching posterior for two different observations, following the setup in \cite{wang2023efficient}. The model is trained using flow matching and a training set of size \num{50000} (top) and  \num{500000} (bottom). The red point/bars represent the true parameter $\vx^\star =(0.01, 0.5, 1, 0.01)^\top$ (left) and $\vx^\star =(0.02, 0.02, 0.02, 0.02)^\top$ (right), the black crosses/bars denote the mode of the learned density $\fmdens^{\vy}$, as in \cite{wang2023efficient}.}
    \label{fig:lv-large}
\end{figure}
\section{Discussion} \label{sec:discussion}

In this paper we have discussed and evaluated Bayesian flow matching for approximate sampling from posteriors occurring in PDE-based Bayesian inverse problems.
We have derived computable estimates of lower and upper bounds on the TV distance and KL divergence between true posterior and flow matching approximation which are based mainly on the acceptance rate of a hybrid sampler --- an IMH using the flow matching approximation as proposal.
The validity and accuracy of these bounds have been verified for the example of Section \ref{sec:toy}, where reference values can be computed by quadrature.

Moreover, we have compared flow matching against well-established MCMC methods in various settings in Sections \ref{sec:infinite_heat_example} and \ref{subsec:eit}.
Across these examples, whenever the true observations were covered by the spread of observations within the training data, flow matching agreed closely with MCMC-generated samples in terms of posterior means, standard deviations and quantiles, at a fraction of the forward solves required by MCMC.
However, the fit in TV distance can sometimes be  disappointing: for instance, in the setting of \cref{fig:dram_vs_fm_1}, flow matching reproduces the posterior mean and the $95\%$ band of DRAM samples to about $1\%$ in relative $L^2$-norm, yet FM-IMH accepts only 5\% to 11\% of its proposals (\cref{tab:fm_imh_acceptance}).
The explanation for these seemingly contradictory findings is that flow matching can fit the marginals of the highly informed parameter directions rather poorly, which, due to their comparatively small posterior variance, do not significantly contribute to posterior mean and spread, but differences in the marginals are strongly penalized in TV distance.
In general, we noticed in our experiments that flow matching may suffer from many or highly accurate observations.

We have proposed two remedies that account for the very different posterior spread across coordinates: weighting the loss by the inverse of the linearized posterior standard deviation, and adding to the velocity field an affine term that reproduces a Gaussian approximation of the posterior.

Another limitation of flow matching found in our studies is the loss of accuracy when the observation lies in the tail of the prior predictive distribution, a phenomenon known in machine learning as out-of-distribution error.
In these cases MCMC is to be preferred.
We note that, in principle, it can be checked whether the observations $y$ at hand are sufficiently covered by the prior predictive distribution by comparing the normalized radius $r(y)$ of \cref{eq:yradius} with that of the training observations.

However, our experiments also confirmed two important advantages of Bayesian flow matching over MCMC: amortization, demonstrated for EIT in Section~\ref{subsec:eit}, and likelihood-free inference, demonstrated for a stochastic Lotka--Volterra model in Section~\ref{subsec:lotka_volterra}.
In both problems MCMC would have been either infeasible (\Cref{subsec:lotka_volterra}) or too slow for a stream of observations (\Cref{subsec:eit}).
Moreover, our hybrid method --- IMH with flow matching proposal --- is another promising method combining, in a way, the best of both worlds: due to Metropolization, asymptotic exactness is restored, and since the forward solves for the proposals are independent and can be carried out in parallel, it can still be significantly faster than a single run of a classical MCMC method.
In our experiments the IMH chain mixed well whenever the observation was typical under the prior (Tables~\ref{tab:mcmc_ess} and \ref{tab:eit-imh-all}), but poorly for observations in the tails of the prior predictive distribution.

A theoretical foundation for flow matching-based posterior approximation in inverse problems is still largely missing.
The distances used here to judge a learned posterior, the total variation distance bounded by the acceptance rate and the Kullback--Leibler divergence, are common choices in finite dimensions but ill-suited to high- and infinite-dimensional settings: two measures on a function space are easily mutually singular, which makes both distances degenerate even when the measures are close in every observable sense.
Further directions therefore include an error analysis of flow matching in function space based on the Wasserstein distance, which does not have this defect;
a systematic comparison of the total cost of both approaches at matched computational budgets, including the forward solves for the training data and the training of the network, which we have not attempted since its outcome depends strongly on the available hardware;
and, on the practical side, addressing missing training support by combining the flow with adaptive error models \cite{kaipiosomersalo2005,CalvettiDunlopSomersaloStuart2018,CuiEtAl2019}: just as these replace error statistics computed over the prior by statistics updated where the posterior concentrates, the flow could be refined iteratively with training samples drawn near the observations actually encountered, trading part of the amortization for accuracy.

\ifiop
\data{\datastatement}
\else
\section*{Data availability}
\datastatement
\fi

\printbibliography[title=References]

\appendix
\crefalias{section}{appendix}

\section{The Metropolis--Hastings algorithm}
\label{app:mh_alg}
\Cref{alg:mh} states the generic Metropolis--Hastings iteration of \cref{subsec:mh}.
The samplers used in this paper differ only in the proposal density $q$: random walk Metropolis, pCN and DRAM propose locally around the current state (\cref{subsec:mcmc_variants}), whereas IMH draws its proposals independently of it, here from the flow (\cref{subsubsec:imh}).

\begin{algorithm}[ht]
\caption{Generic Metropolis--Hastings MCMC}
\label{alg:mh}
\begin{algorithmic}[1]
\Require unnormalized target density $\tdens^y$, proposal density
  $q(\cdot \mid \cdot)$, initial state $x^{(0)}$, chain length $N$
\For{$k = 1, \dots, N$}
  \State sample candidate $\tilde{x} \sim q(\,\cdot \mid x^{(k-1)})$
  \State compute the acceptance probability
    $\displaystyle
     \alpha = \min\Bigl\{ 1,\;
       \frac{\tdens^y(\tilde{x})}{\tdens^y(x^{(k-1)})}\,
       \frac{q(x^{(k-1)} \mid \tilde{x})}{q(\tilde{x} \mid x^{(k-1)})}
     \Bigr\}$
  \State sample $u \sim \mathcal{U}[0, 1]$
  \If{$u \le \alpha$}
    \State $x^{(k)} \gets \tilde{x}$ \Comment{accept}
  \Else
    \State $x^{(k)} \gets x^{(k-1)}$ \Comment{reject}
  \EndIf
\EndFor
\Ensure states $x^{(1)}, \dots, x^{(N)}$
\end{algorithmic}
\end{algorithm}

\section{Proof of Proposition~\ref{prop:imh_tv}} \label{app:imh_tv_proof}

Write $f \coloneqq \tdens^y$ and $g \coloneqq q$ for the two densities. By \eqref{eq:mh_acceptance} with $q(\tilde x \mid x) = g(\tilde x)$, we have $f(x) g(\tilde x)\, \alpha(x, \tilde x) = \min\{ f(x) g(\tilde{x}),\, f(\tilde{x}) g(x) \}$. Using
$\min\{a, b\} = \tfrac{1}{2}(a + b) - \tfrac{1}{2}|a - b|$ together with
$\iint f(x) g(\tilde{x})\, \mathrm{d}x\, \mathrm{d}\tilde{x} = 1$ gives
\begin{equation*}
  \bar{\alpha}
  = \iint \min\bigl\{ f(x) g(\tilde{x}),\, f(\tilde{x}) g(x) \bigr\}
    \, \mathrm{d}x\, \mathrm{d}\tilde{x}
  = 1 - D,
  \qquad
  D \coloneqq \tfrac{1}{2} \iint
    \bigl| f(x) g(\tilde{x}) - f(\tilde{x}) g(x) \bigr|
    \, \mathrm{d}x\, \mathrm{d}\tilde{x}.
\end{equation*}
For the lower bound in \eqref{eq:imh_sandwich}, insert
$\pm\, g(x) g(\tilde{x})$ and apply the triangle inequality,
\begin{equation*}
  \bigl| f(x) g(\tilde{x}) - f(\tilde{x}) g(x) \bigr|
  \le g(\tilde{x})\, |f(x) - g(x)| + g(x)\, |f(\tilde{x}) - g(\tilde{x})|,
\end{equation*}
and integrate, using $\|f - g\|_{L^1} = 2\, d_{\mathrm{TV}}(q, \tdist^y)$, to obtain
$D \le 2\, d_{\mathrm{TV}}(q, \tdist^y)$. For the upper bound, set
$A \coloneqq \{ f > g \}$ and
$\varphi(x, \tilde{x}) \coloneqq
 \mathbf{1}_A(\tilde{x}) - \mathbf{1}_A(x) \in [-1, 1]$. A direct
computation gives
\begin{equation*}
  \iint \bigl( f(x) g(\tilde{x}) - f(\tilde{x}) g(x) \bigr)\,
    \varphi(x, \tilde{x})\, \mathrm{d}x\, \mathrm{d}\tilde{x}
  = 2 \Bigl( \int_A g\, \mathrm{d}x - \int_A f \,\mathrm{d}x \Bigr)
  = -2\, d_{\mathrm{TV}}(q, \tdist^y),
\end{equation*}
since $\int_A (f - g)\, \mathrm{d}x = d_{\mathrm{TV}}(q, \tdist^y)$ for this choice of $A$. As
$|\varphi| \le 1$, the left-hand side is bounded in modulus by $2D$, so
$D \ge d_{\mathrm{TV}}(q, \tdist^y)$, which completes the proof. \hfill$\square$

\section{Flow matching in practice} \label{app:fm_alg}

\Cref{alg:fm_data,alg:fm_train,alg:fm_sample} state the three stages used
throughout \cref{sec:experiments}: generating the training set from the prior and
the forward map, fitting the velocity field, and drawing samples.

\begin{algorithm}[H]
\caption{Training-set generation}
\label{alg:fm_data}
\begin{algorithmic}[1]
\Require prior $\tdist_X$, forward map $F$, noise covariance $\Gamma$, size $N$
\For{$j = 1, \dots, N$}
  \State $x^j \sim \tdist_X$
  \State $\varepsilon^j \sim \NN(0, \Gamma)$
  \State $y^j \gets F(x^j) + \varepsilon^j$
\EndFor
\Ensure pairs $\{(x^j, y^j)\}_{j=1}^N$ from the joint distribution
\end{algorithmic}
\end{algorithm}

\begin{algorithm}[H]
\caption{Flow matching training}
\label{alg:fm_train}
\begin{algorithmic}[1]
\Require pairs $\{(x^j, y^j)\}_{j=1}^N$ from \cref{alg:fm_data}, initial
  parameters $\theta_0$ of the velocity field $v_\theta$, batch size $B$
\State $\theta \gets \theta_0$
\While{not converged}
  \State draw a minibatch of $B$ pairs, $(X_1^\ell, Y^\ell)$ for $\ell = 1, \dots, B$
  \State $X_0^\ell \sim \NN(0, I_d)$,
    \quad $t^\ell \sim \UU[0,1]$,
    \quad $X_t^\ell \gets (1 - t^\ell) X_0^\ell +  t^\ell X_1^\ell
      \vphantom{\smash[b]{\Big(}}$
  \State $\displaystyle
    \mathcal L(\theta) \gets \frac{1}{B} \sum_{\ell=1}^B
      \bigl\| v_{\theta}\bigl(t^\ell, X_t^\ell; Y^\ell\bigr)
             - \bigl(X_1^\ell - X_0^\ell\bigr) \bigr\|_2^2$
  \State $\theta \gets \textsc{AdamStep}(\theta, \nabla_\theta \mathcal L)$
\EndWhile
\Ensure $\theta$ approximately minimizing \eqref{eq:fm_loss}, i.e.,
  $\min_\theta \ev{\bigl\| v_\theta(t, X_t; Y) - (X_1 - X_0) \bigr\|_2^2}$
\end{algorithmic}
\end{algorithm}

\begin{algorithm}[H]
\caption{Approximate posterior sampling}
\label{alg:fm_sample}
\begin{algorithmic}[1]
\Require trained $v_\theta$, observation $y$, number of samples $n$
\For{$j = 1, \dots, n$} \Comment{independent; done in one batch in practice}
  \State $X_0^j \sim \NN(0, I_d)$
  \State $X_1^j \gets$ solve
    $\;\odv{}{t} X(t) = v_\theta\bigl(t, X(t); y\bigr)$, $t \in (0,1)$,
    from $X(0) = X_0^j$
\EndFor
\Ensure approximate posterior samples $\{X_1^j\}_{j=1}^n$
\end{algorithmic}
\end{algorithm}

\section{Marginals of the leading Karhunen--Loève coefficients} \label{app:heat-marginals}

\Cref{fig:heat_marginals} complements \cref{fig:dram_vs_fm_1} by showing the posterior in the latent coefficients $\xi_1, \dots, \xi_4$ instead of the field, for the MLP and for the combination of \texttt{gauss} and \texttt{weights} of \cref{subsubsec:visual}, both with $\nhidden = 800$.
The widely spread marginals of $\xi_1$ and $\xi_3$ are reproduced by both flows, whereas for the strongly constrained coefficients $\xi_2$ and $\xi_4$ the MLP is too wide or shifted.
Apart from $x^\star_1$, the combination of both modifications matches DRAM in all four coefficients.

\begin{figure}[htbp]
    \centering
    \includegraphics[width=\linewidth]{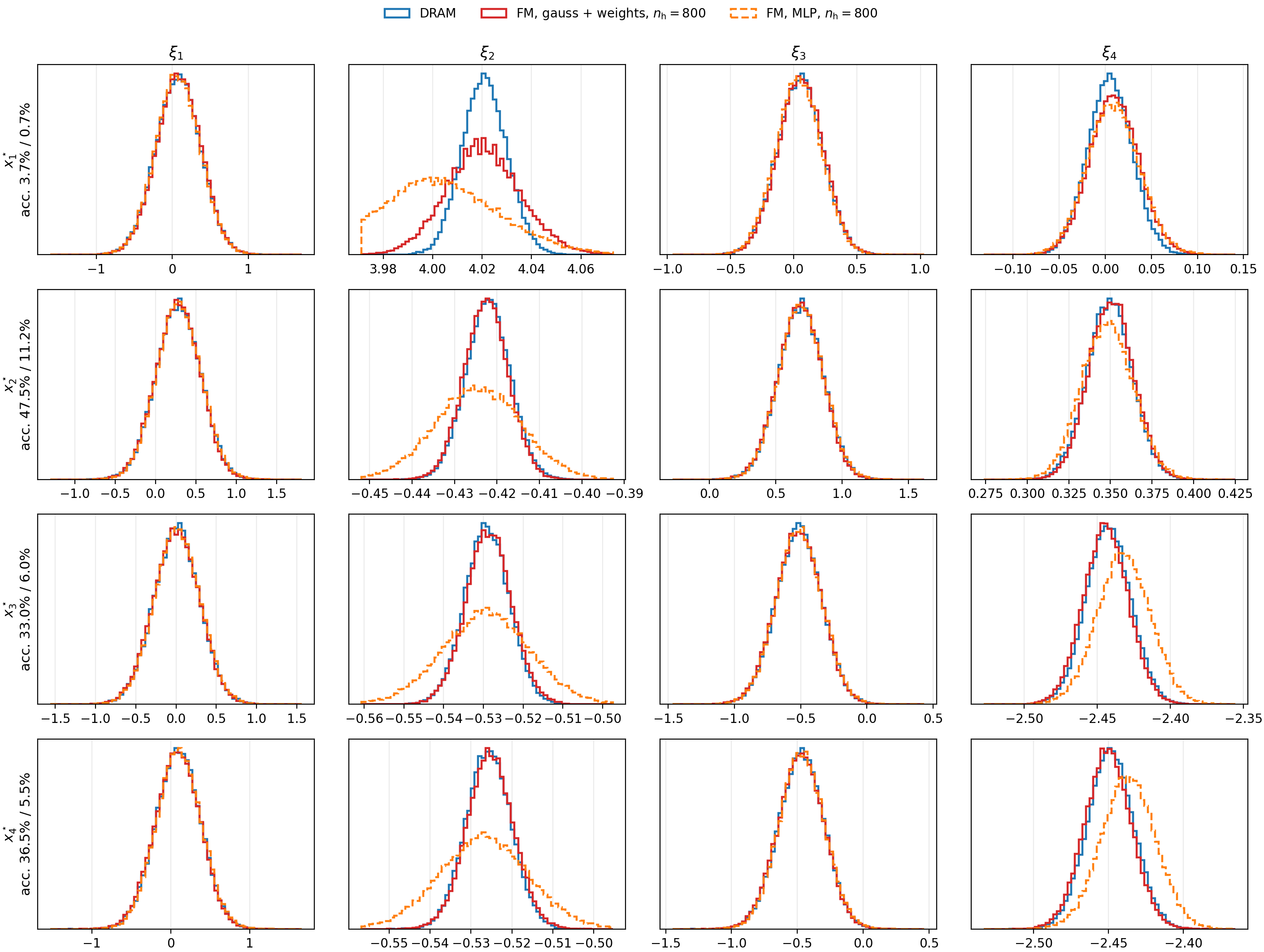}
    \caption{Marginal posterior densities of the Karhunen--Loève coefficients $\xi_1, \dots, \xi_4$ (columns) for the four ground truths $x^\star_1, \dots, x^\star_4$ (rows) in the setting of \cref{fig:dram_vs_fm_1} ($\nkl = 128$, $\nobs = 256$, $\gamma^2 = 10^{-5}$): DRAM (blue), FM with \texttt{gauss} and \texttt{weights} (red) and the MLP (orange, dashed), both with $\nhidden = 800$.
    Histograms of \num{40000} samples each; the row labels give the FM-IMH acceptance rates of the two flows from \cref{tab:fm_imh_acceptance}.}
    \label{fig:heat_marginals}
\end{figure}

\section{Draws from the EIT priors} \label{app:eit-prior-draws}
\Cref{fig:eit-prior-draws} shows four draws from each of the three priors of \cref{sec:eit-priors}, one row per prior; each column uses the same latent seed for all three.
The log-normal draws are smooth, low-contrast fluctuations around $e^{m_0} = \sqrt2$ without any preferred geometry; over the four draws $\sigma$ ranges from $0.85$ to $2.33$.
The level-set draws are two-valued, $\sigma \in \{1, 2\}$, and since $G$ is a centered Gaussian field every point lies in an inclusion with probability $\tfrac12$: a draw covers about half of the disk with inclusion (area fraction $0.500 \pm 0.046$), split into many irregular components, several of which touch the boundary.
The star-shaped draws, also two-valued, contain exactly two inclusions of radius about $0.3$ near the center of the disk.
Since the centers are drawn with standard deviation $0.3$ and no non-overlap constraint is imposed, the two inclusions frequently overlap, as in the first and third draws, and can extend beyond the disk, as in the second.
Of the three priors, only the star-shaped one produces fields that resemble the ground truth of \cref{sec:cem}, two separated disks in a constant background.

\begin{figure}[htbp]
  \centering
  \includegraphics[width=0.9\linewidth]{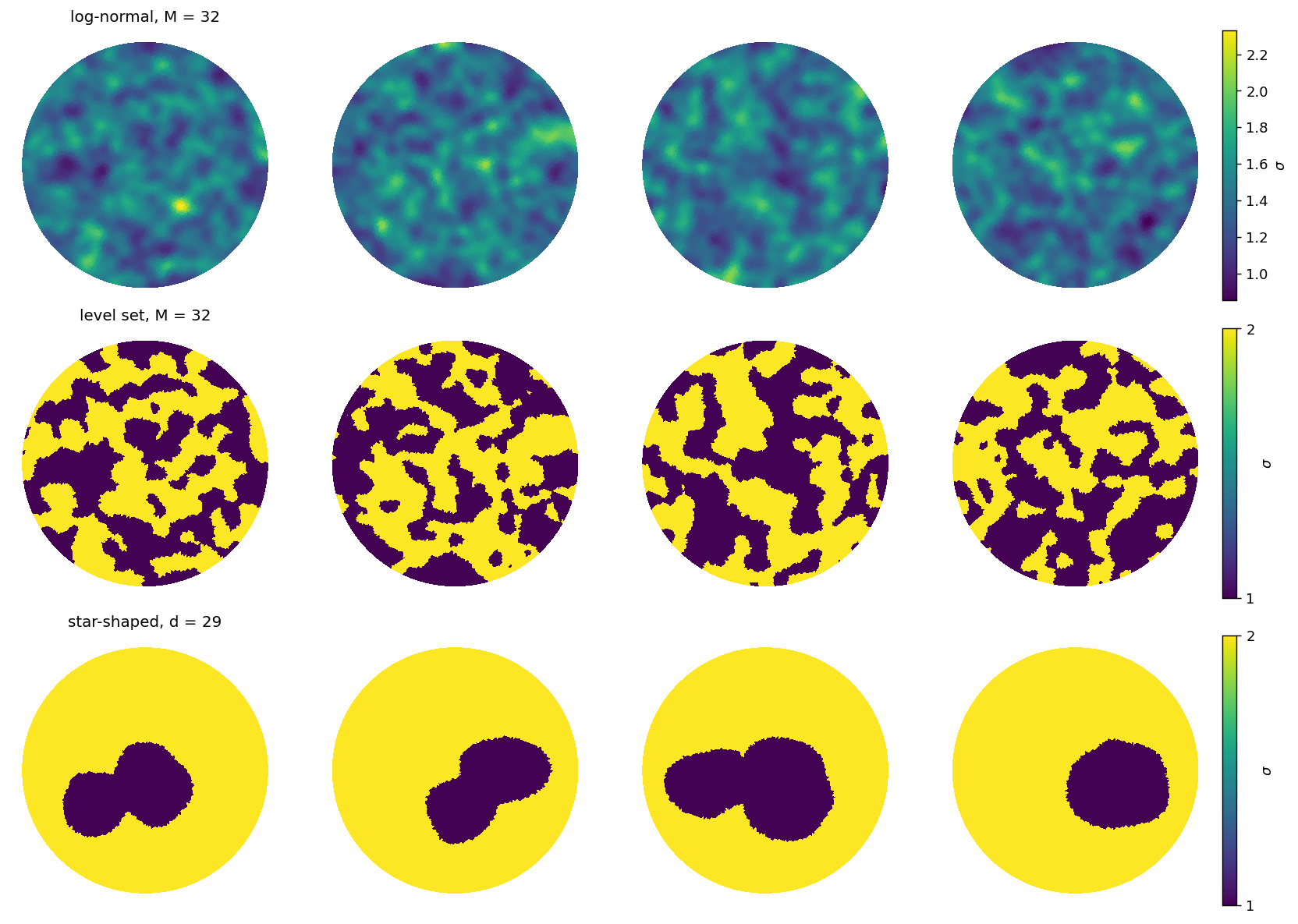}
  \caption{Four draws from each EIT prior on the inversion mesh, one row per prior and the same latent seed in each column.}
  \label{fig:eit-prior-draws}
\end{figure}

\section{Flow matching against MCMC on the canonical observation} \label{app:eit-vobs}

\Cref{fig:eit-vobs-starshaped,fig:eit-vobs-lognormal,fig:eit-vobs-levelset} extend \cref{fig:eit-vobs-grid} by the posterior standard deviations and the difference between the two posterior means, one figure per prior.
Each shows the truth and the reference and flow-matching posterior means in the top row, and the difference of the means and the two standard deviations in the bottom row; means and standard deviations each share one color scale.

\paragraph{Reference samplers}
For the star-shaped prior ($d = 26$) the reference is DRAM with $100$ chains of $2\times10^5$ steps, thinned by $200$, at an acceptance rate of $42.7\%$.
For the two priors with $d = 1024$ the reference is pCN.
Under the log-normal prior we run $4$ chains of $2\times10^6$ steps, thinned by $1000$, with $\beta = 0.045$ and an acceptance rate of $22.8\%$.
Under the level-set prior we run $100$ chains of $6\times10^5$ steps, thinned by $300$, with $\beta = 0.03$ and an acceptance rate of $19.8\%$; on the $5\,382$ elements whose value is uncertain, the effective sample size is at least $2\,039$.

\paragraph{Effective sample size}
\Cref{tab:eit-ref-ess} compares the effective sample size of these chains with that of IMH using the flow as proposal, for the smallest over the latent coordinates $\xi_i$ and for the mean conductivity over all elements, as in \cref{tab:eit-imh-all}.
The ESS of the reference is computed per chain on the stored, thinned draws, summed over the chains and normalized by the total number of samples after burn-in.
For pCN a sample costs one forward solve; the delayed-rejection stage of DRAM solves a second time after a first-stage rejection, so its cost per sample is higher.
Per forward solve, IMH is ahead under the star-shaped prior, by a factor of about $35$ in $\vxi$ and $11$ in the mean conductivity.
Under the log-normal and the level-set prior IMH makes only $19$ and $12$ moves, so its ESS there is a rough estimate.
Not included is the one-time cost of the flow's $2.2\times10^6$ training and validation samples.

\begin{table}[htbp]
\centering\footnotesize
\caption{Effective sample size of the MCMC reference chains on $\vy$, against IMH with $10\,000$ flow-matching proposals (\cref{tab:eit-imh-all}).}
\label{tab:eit-ref-ess}
\begin{tabular}{lccc}
\toprule
 & star-shaped & log-normal & level set \\
\midrule
sampler & DRAM & pCN & pCN \\
chains & $100$ & $4$ & $100$ \\
burn-in per chain & $2\times10^3$ & $2\times10^5$ & $4\times10^5$ \\
samples per chain & $2\times10^5$ & $2\times10^6$ & $6\times10^5$ \\
thinning & $200$ & $1000$ & $300$ \\
ESS ($\vxi$) & $4\,397$ & $641$ & $2\,781$ \\
ESS (mean $\sigma$) & $33\,830$ & $1\,741$ & $15\,961$ \\
ESS ($\vxi$) per $10^4$ samples & $2.2$ & $0.80$ & $0.46$ \\
ESS (mean $\sigma$) per $10^4$ samples & $16.9$ & $2.2$ & $2.7$ \\
\addlinespace
\multicolumn{4}{l}{\emph{IMH, $10^4$ flow-matching proposals}} \\
\quad ESS ($\vxi$) & $78$ & $3$ & $6$ \\
\quad ESS (mean $\sigma$) & $192$ & $4$ & $21$ \\
\bottomrule
\end{tabular}
\end{table}

\paragraph{Star-shaped}
In \cref{fig:eit-vobs-starshaped} both methods place all of the uncertainty in a thin annulus around each interface, the only place a two-valued posterior can be uncertain, and in both the annulus is wider on the side facing the center of the disk, farthest from the electrodes.
The difference of the two means is confined to the same annulus, in line with the agreement of the inclusion probabilities reported in \cref{sec:eit-results}.

\paragraph{Log-normal}
In \cref{fig:eit-vobs-lognormal} both posterior means show the two inclusions only as blurred depressions, and both place a band of high conductivity along parts of the boundary where the truth is constant; the two methods are far from the truth in the same way.
This reflects the mismatch between the smooth log-normal prior and the piecewise constant truth discussed in \cref{sec:eit-ood}, and is not an error of the flow.
The standard deviations are of similar size across the disk and lower inside the inclusions and at the electrodes.
The standard deviation of the flow is the smoother of the two, since the reference still carries Monte Carlo noise.

\paragraph{Level set}
In \cref{fig:eit-vobs-levelset} the flow locates both inclusions, and both methods leave the region between them uncertain.
The flow is over-dispersed, however: its standard deviation is larger throughout the background, and the difference of the means is positive inside the true inclusions and negative outside, i.e., the flow moves inclusion probability from the inclusions into the background.
This is the larger inclusion area, $0.222$ against $0.180$, reported in \cref{sec:eit-results}.

\begin{figure}[htbp]
    \centering
    \includegraphics[width=0.98\linewidth]{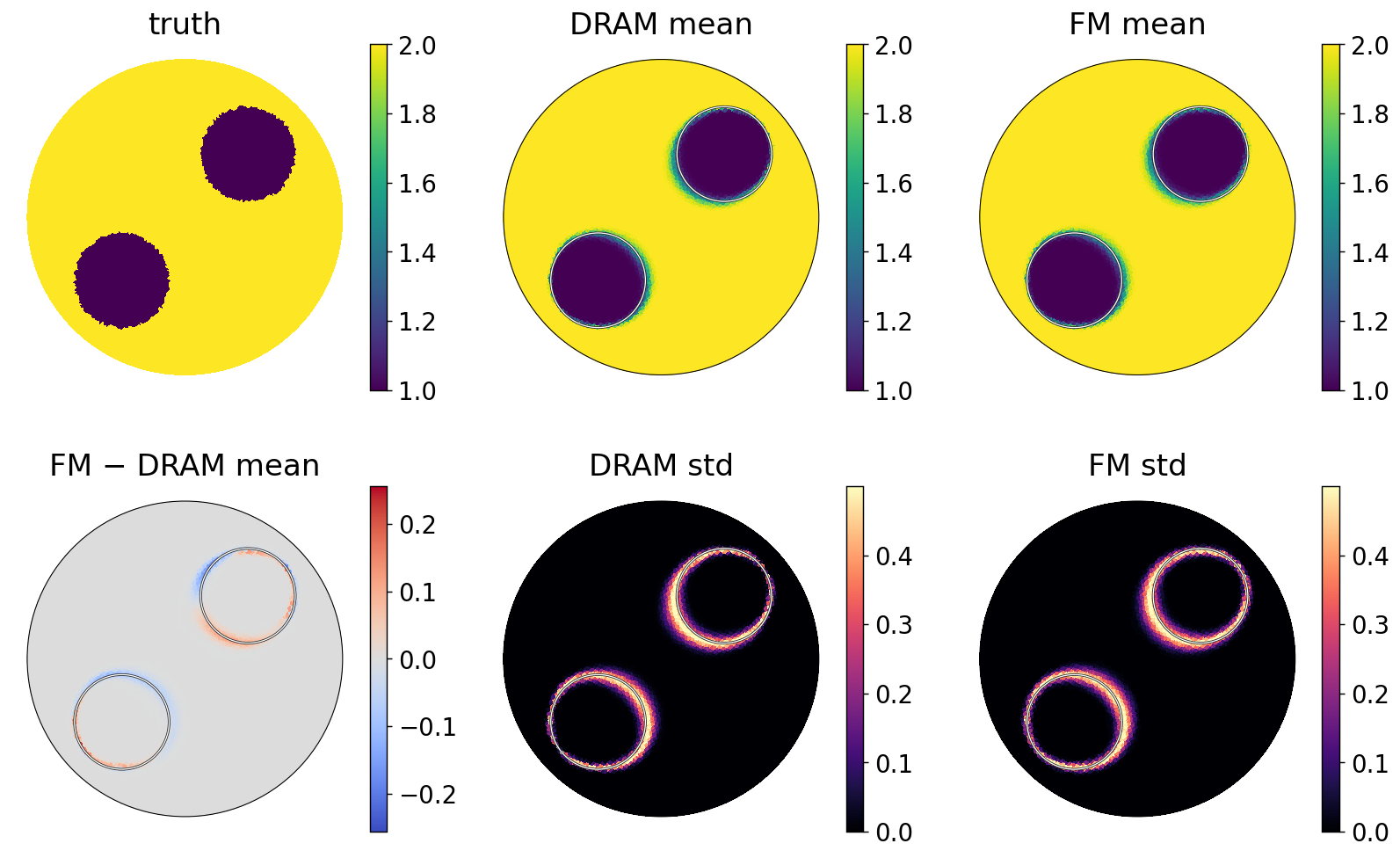}
    \caption{Star-shaped prior ($d=26$), conditioned on $\vy$, against DRAM.
    Top: truth and posterior means. Bottom: FM minus DRAM mean, and posterior standard deviations. Circles: true inclusions.}
    \label{fig:eit-vobs-starshaped}
\end{figure}

\begin{figure}[htbp]
    \centering
    \includegraphics[width=0.98\linewidth]{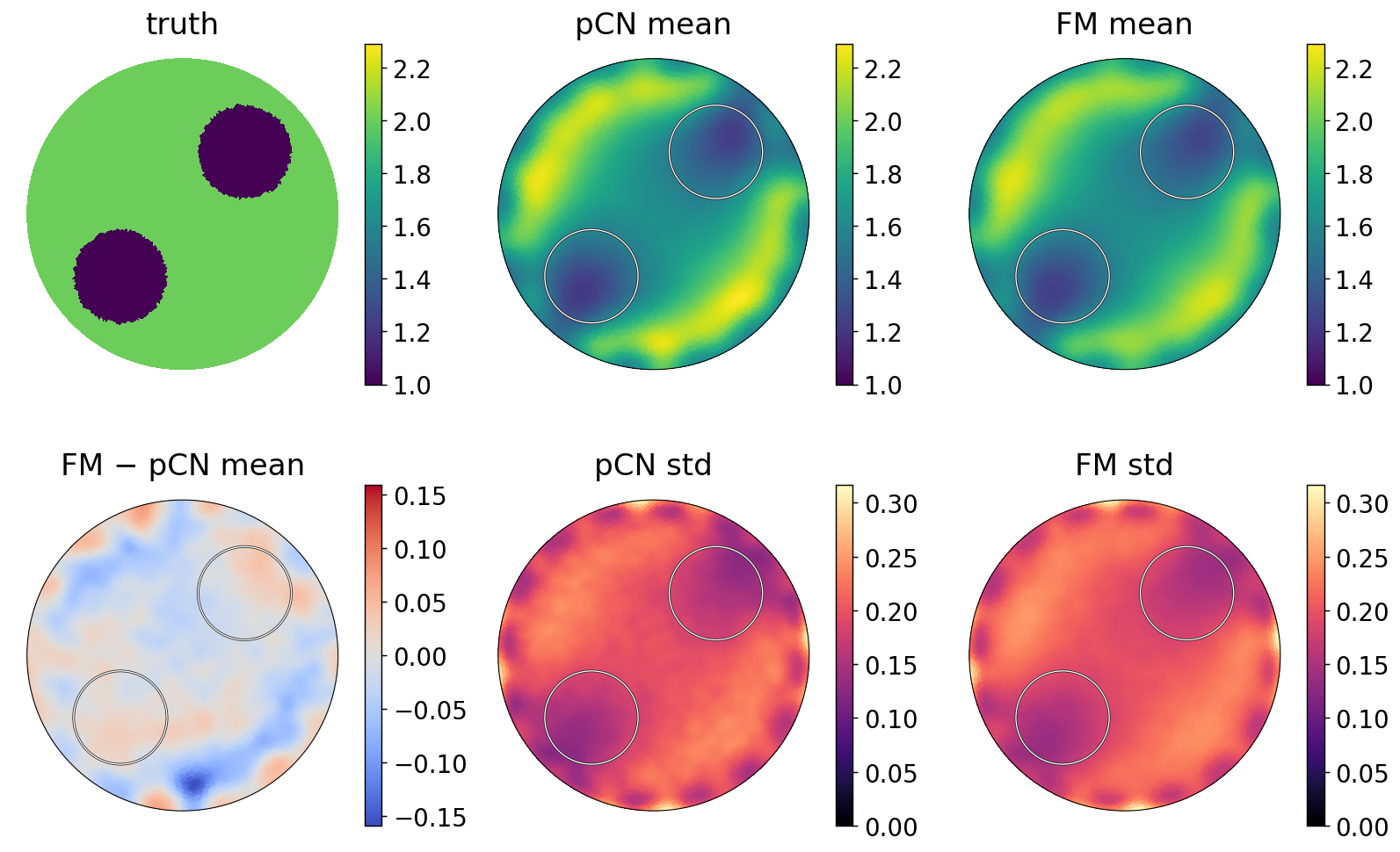}
    \caption{Log-normal prior ($M=32$), conditioned on $\vy$, against pCN; layout as in \cref{fig:eit-vobs-starshaped}.}
    \label{fig:eit-vobs-lognormal}
\end{figure}

\begin{figure}[htbp]
    \centering
    \includegraphics[width=0.98\linewidth]{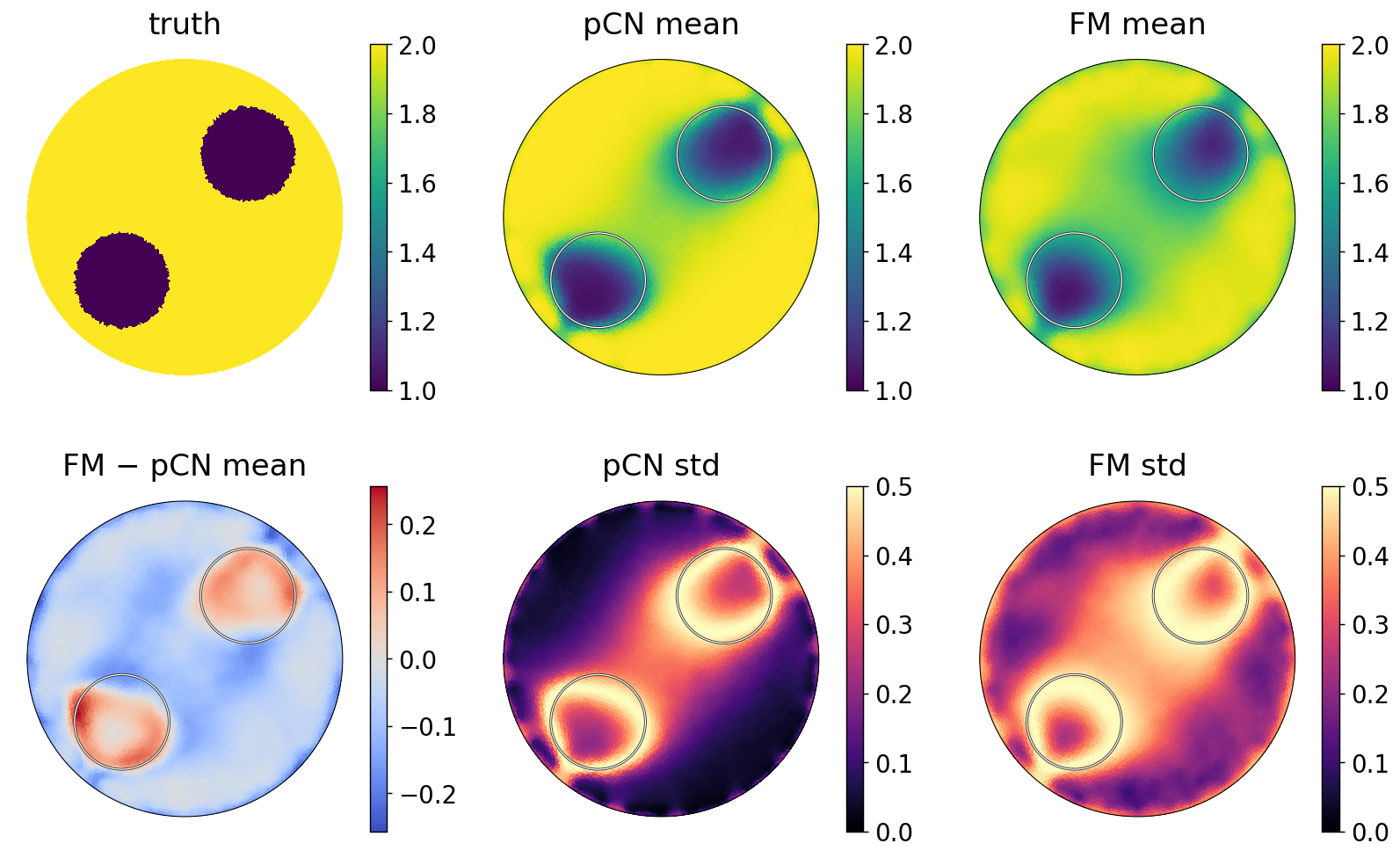}
    \caption{Level-set prior ($M=32$), conditioned on $\vy$, against pCN; layout as in \cref{fig:eit-vobs-starshaped}.}
    \label{fig:eit-vobs-levelset}
\end{figure}

\section{A prior draw as truth} \label{app:eit-priorobs}

\Cref{fig:eit-priorobs} repeats the comparison of \cref{app:eit-vobs} on an observation that is in distribution by construction.
For each prior we take one draw, which is not part of the training set, as ground truth and simulate its observation exactly as the training data were generated.

For the star-shaped and the log-normal prior the flow is accurate enough on this observation that IMH with the flow as proposal serves as the reference: with $10\,000$ proposals it accepts $52.8\%$ ($5\,279$ moves, ESS $1\,841$ in $\vxi$ and $2\,906$ in the mean conductivity) and $38.6\%$ ($3\,857$ moves, ESS $352$ and $1\,029$), respectively (\cref{tab:eit-imh-all}).
For the level-set prior the IMH chain visits only $9$ distinct states in $10\,000$ proposals, so its mean would show little more than those states and is not the posterior mean.
The reference there is pCN with $60$ chains of $1.8\times10^6$ steps each at $\beta = 0.01$ (acceptance $17.7\%$), which still yields an effective sample size of only about $360$ per element (median over the uncertain elements) and $285$ for the inclusion area.
The remaining roughness of that panel is its Monte Carlo error.

Under the star-shaped prior both means recover the two overlapping inclusions of the draw, blurred only at the interface.
Under the log-normal prior both means are nearly featureless: the fine-scale fluctuations of the draw lie in directions the $195$ measurements do not inform (\cref{sec:eit-priors}), and the posterior mean stays close to the prior mean except near the boundary.
Under the level-set prior both means resolve structure only close to the boundary, and flow and reference agree on where they place it; the interior, farthest from the electrodes, remains uncertain.

\begin{figure}[htbp]
    \centering
    \includegraphics[width=\linewidth]{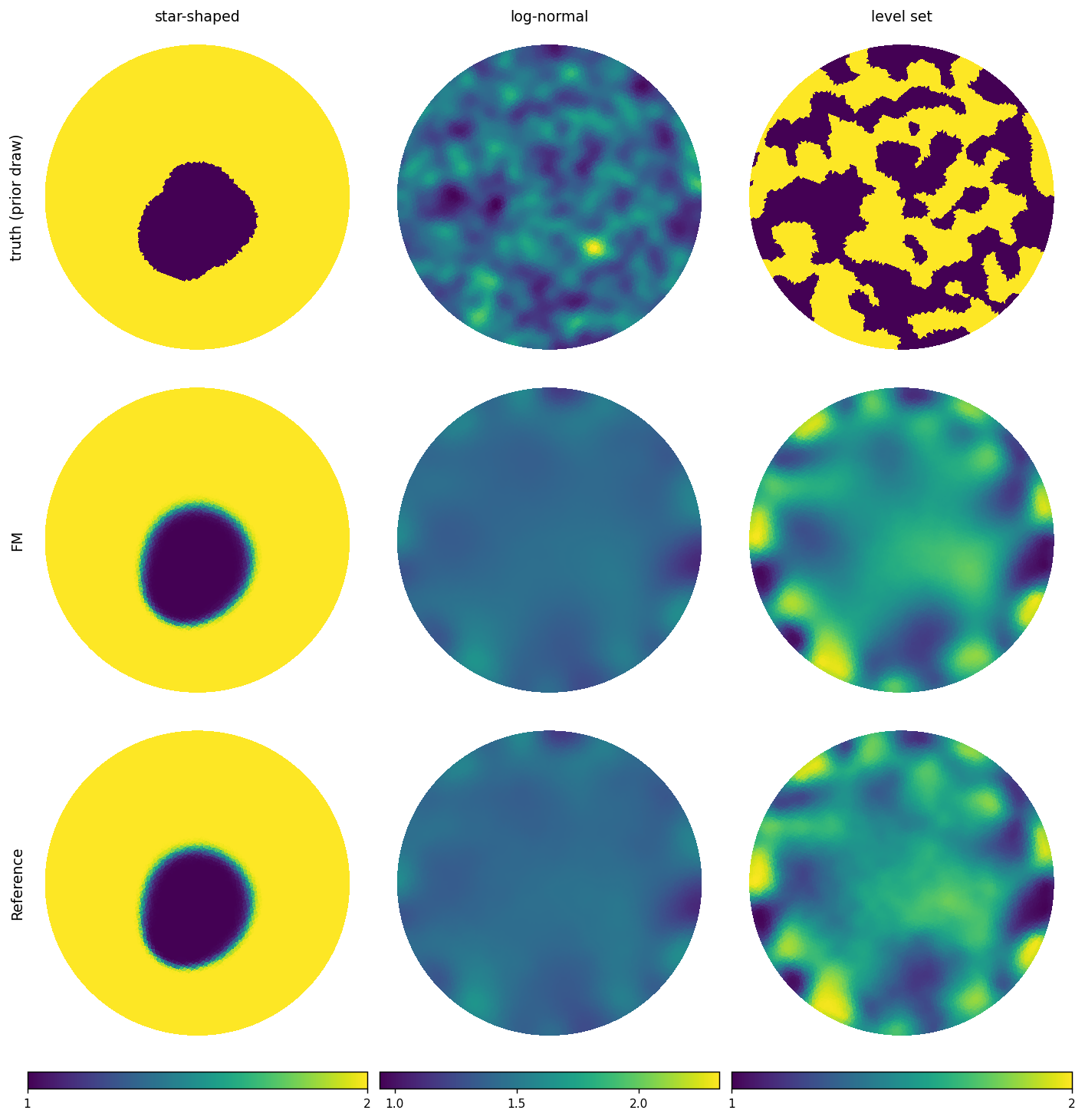}
    \caption{A prior draw as ground truth, one column per prior: truth (top), flow-matching posterior mean (middle) and reference posterior mean (bottom), with IMH as the reference for the star-shaped and log-normal priors and pCN for the level-set prior. Color scales are shared within each column.}
    \label{fig:eit-priorobs}
\end{figure}

\section{Moving and merging inclusions} \label{app:eit-orbit}
A second sequence, set up as in \cref{sec:eit-dynamic}, tests whether the flow also follows inclusions that move and merge.
Over $72$ frames two inclusions of radius $0.25$ first orbit the center of the disk once clockwise at radius $0.5$ ($48$ frames) and then move straight to the center ($24$ frames), where they overlap and finally coincide in a single inclusion, halving the inclusion area.
With $1\,000$ samples per frame, the flow follows every stage (\cref{fig:eit-dynamic-orbit}), including the merge, which this prior can only represent by placing its two inclusions on top of each other.
The posterior median deviates from the truth by an RMSE of $0.109$ on average, ranging from $0.061$ to $0.157$ over the frames.
Its inclusion area $\int_{\DD} \mathbf{1}_{\sigma(\vz) = 1}\,\mathrm d\vz$ differs from the exact area by $0.002$ on average and by at most $0.029$ in any frame, with a correlation of $0.961$ over the frames.
In the top row of \cref{fig:eit-dynamic-orbit} the two posterior means rotate with the true inclusions; in the bottom row they approach each other, cover the union of the two overlapping disks in frames $62$ and $67$, and form a single disk in the last frame.
Integrating with RK4 at $10$ or at $25$ steps gives the same posterior, with a per-frame RMS difference of at most $0.002$ in the mean, at $21.9$ against $34.2$\,ms per frame, i.e., $46$ against $29$ frames per second on one RTX 4070 Ti SUPER; the figure uses $10$ steps.

\begin{figure}[htbp]
    \centering
    \includegraphics[width=\linewidth]{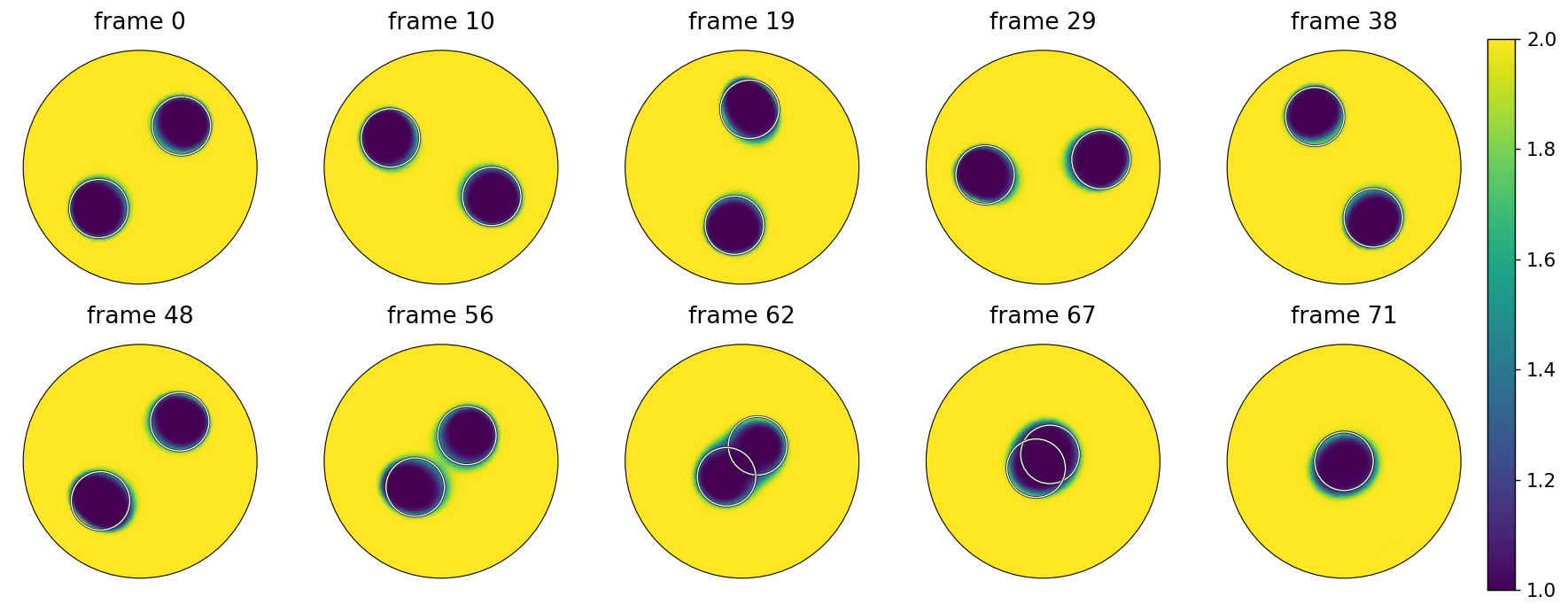}
    \caption{Posterior means of the star-shaped flow for ten of the $72$ frames of the orbit-and-merge sequence. Top: the clockwise orbit; bottom: the move to the center and the merge. White circles mark the true inclusions.}
    \label{fig:eit-dynamic-orbit}
\end{figure}

\end{document}